\documentclass[aap]{imsart-mod}

\RequirePackage{amsthm,amsmath,amsfonts,amssymb}
\RequirePackage[numbers,sort&compress]{natbib}
\RequirePackage[colorlinks,citecolor=blue,urlcolor=blue]{hyperref}
\RequirePackage{graphicx}

\usepackage{hyperref}

\startlocaldefs
\numberwithin{equation}{section}
\theoremstyle{plain}

\newtheorem{theorem}{Theorem}[section]
\newtheorem{lemma}[theorem]{Lemma}
\newtheorem{proposition}[theorem]{Proposition}
\theoremstyle{definition}
\newtheorem{definition}[theorem]{Definition}

\newtheorem{remark}[theorem]{Remark}

\usepackage{booktabs}
\usepackage{nicematrix}
\usepackage{mathtools}
\usepackage{float}

\renewcommand{\prec}{\preceq} 
\newcommand{\precn}{\precneqq}
\newcommand{\bo}{\boldsymbol1}
\newcommand{\lopt}{\lambda_{\mathrm{opt}}}
\newcommand{\Poi}{\mathsf{Poi}}
\newcommand{\PPP}{\mathsf{PPP}}
\newcommand{\MMPPt}[1]{MMPP(#1)}
\newcommand{\CPt}[1]{CP(#1)}

\def\emptyset{\varnothing}

\makeatletter
\def\do@not@check@textbox@alter{}
\makeatother

\usepackage{tikz}
\usetikzlibrary{arrows.meta}

\definecolor{redpath}{HTML}{FF0000} 
\definecolor{greenpath}{HTML}{00AF00} 
\definecolor{bluepath}{HTML}{0030FF} 

\tikzset{
	worldline/.style={line width=1.5pt},
	timeline/.style={line width=0.75pt},
	timeaxis/.style={->,line width=0.75pt},
	event/.style={inner sep=0.6pt,text height=1.75ex,text depth=.25ex,scale=1.5},
	eventstar/.style={inner sep=0pt},
	arrowcirc/.style={inner sep=0.6pt, scale=1.25},
	arrowstar/.style={inner sep=0pt},
	pathred/.style={
		draw=none, 
		preaction={draw=redpath,line width=6.0pt,line cap=butt,opacity=1,line join=butt} 
	},
	pathgreen/.style={
		draw=none,
		preaction={draw=greenpath,line width=6.0pt,line cap=butt,opacity=1,line join=butt} 
	},
	pathblue/.style={
		draw=none,
		preaction={draw=bluepath,line width=6.0pt,line cap=butt,opacity=1,line join=butt}
	}
}

\newcommand{\markx}[2]{%
	\node[event] at (#1,#2) {$\boldsymbol\times$};}
\newcommand{\markot}[2]{%
	\node[event] at (#1,#2) {$\boldsymbol\otimes$};}
\newcommand{\markotstar}[2]{%
	\node[event] at (#1,#2) {$\boldsymbol\otimes$};
	\node[eventstar,scale=1.375] at (#1+0.17,#2+0.23) {$\boldsymbol\ast$};}

\def\arrowcircleyshift{-0.006} 

\newcommand{\rarrowplain}[3]{%
	\draw[horarrow] (#1,#3) -- (#2,#3);}

\tikzset{
	arrowcirc/.style={
		draw,
		circle,
		inner sep=0pt,
		minimum size=1.125ex,   
		line width=0.75pt      
	},
}
\newcommand{\rarrowcirc}[3]{%
	\draw[horarrow] (#1,#3) -- (#2,#3);
	\node[arrowcirc,scale=1.5] at (#2,#3+\arrowcircleyshift) {};
}

\newcommand{\rarrowcircstar}[3]{%
	\draw[horarrow] (#1,#3) -- (#2,#3);
	\node[arrowcirc,scale=1.5] at (#2,#3+\arrowcircleyshift) {};
	\node[arrowstar,scale=1.325] at (#2+0.16,#3+0.18) {$\boldsymbol\ast$};}
	
\newcommand{\larrowcircstar}[3]{%
	\draw[horarrow] (#1,#3) -- (#2,#3);
	\node[arrowcirc,scale=1.5] at (#2,#3+\arrowcircleyshift) {};
	\node[arrowstar,scale=1.325] at (#2-0.16,#3+0.18) {$\boldsymbol\ast$};}

\tikzset{
	horarrow/.style={->,>={Stealth[length=10pt, width=10pt]},line width=1.2pt},
}

\newcommand{\gto}{%
	\mathrel{%
		\tikz[baseline=-0.5ex]{
			\draw[->,>={Stealth[length=6pt,width=7pt]},line width=0.65pt]
			(0,0) -- (1.8em,0);
		}%
	}%
}

\newcommand{\gtoCirc}{%
	\mathrel{%
		\tikz[baseline=-0.5ex]{
			\draw[->,>={Stealth[length=6pt,width=7pt]},line width=0.65pt]
			(0,0) -- (1.8em,0);
			\node[inner sep=0pt,scale=1.125] at (1.8em,-0.009) {$\circ$};
			\node[inner sep=0pt] at (1.8em+0.28em,-0.28em) {\scalebox{0.7}{$j$}};
		}%
	}%
}

\newcommand{\gtoCircStar}{%
	\mathrel{%
		\tikz[baseline=-0.5ex]{
			\draw[->,>={Stealth[length=6pt,width=7pt]},line width=0.65pt]
			(0,0) -- (1.8em,0);
			\node[inner sep=0pt,scale=1.125] at (1.8em,-0.009) {$\circ$};
			\node[inner sep=0pt] at (1.8em+0.28em,0.32em) {\scalebox{0.9}{$\ast$}};
			\node[inner sep=0pt] at (1.8em+0.32em,-0.28em) {\scalebox{0.7}{$j$}};
		}%
	}%
}

\endlocaldefs

\makeatletter
\def\url@fmt#1#2#3#4{%
	\begingroup
	\protected@edef\@tempa{#3}%
	\ifx\@tempa\@empty
	\else
	\protected@edef\@tempb{#4}%
	\ifx\@tempb\@empty
	#1{#2#3}
	\else
	#1{#2\ims@href{#4}{#3}}
	\fi
	\fi
	\endgroup
}

\def\journal@url{}%

\def\journal@name{%
	Author preprint. Published in the Annals of Applied Probability. \ %
}%
\makeatother

\begin{document}


\begin{frontmatter}
	\title{Downward conditional monotonicity gives survival and extinction for contact processes in random environments}
	\runtitle{Downward conditional monotonicity gives CPMRE survival}
	
	\begin{aug}
		\author{\fnms{Joseph P.}~\snm{Stover}\ead[label=e1]{stover@gonzaga.edu}\orcid{0000-0001-7664-2569}} 
		\runauthor{J. P. Stover}
		
		\address{Department of Mathematics, Gonzaga University\printead[presep={,\ }]{e1}}
		
	\end{aug}
	
	\begin{abstract}
		The concept of downward conditional monotonicity for the Markov-modulated Poisson process (MMPP) is introduced and used to derive the optimal stochastic domination of a standard Poisson point process. The maximum arrival rate for the Poisson process which allows this domination to exist is shown to be related to an eigenvalue extracted from the generator matrix of the quasi-birth--death (QBD) formulation of the MMPP. This allows derivation of survival and extinction regimes for a large family of contact processes whose infection and recovery rates vary over time according to an underlying random environment with a finite number of states. Direct comparison with standard contact processes which dominate from above and below accomplishes this. 
	\end{abstract}
	
	\begin{keyword}[class=MSC]
		\kwd[Primary ]{60K35}
		\kwd{60K37}
		\kwd[; secondary ]{60G55}
		\kwd{60E15}
	\end{keyword}
	
	\begin{keyword}
		\kwd{Conditional monotonicity}
		\kwd{stochastic domination}
		\kwd{Markov-modulated Poisson process}
		\kwd{contact process}
		\kwd{random environment}
		\kwd{Poisson process}
		\kwd{Cox process}
		\kwd{doubly-stochastic Poisson process}
		\kwd{quasi-birth--death process}
		\kwd{Markovian arrival process}
		\kwd{monotone}
	\end{keyword}
	
\end{frontmatter}


%
\section{Introduction}
\label{sec:intro} 
The contact process is a classic stochastic spatial model of infectious disease, originally introduced by Harris \cite{Harris1974}. This process lives on a graph with edges indicating where the disease can be transmitted from one site to another. A common graph used is the $d$-dimensional nearest-neighbor integer lattice. An infected site infects each neighboring site at rate $\lambda$ and recovers at rate one. Our question of interest is whether the disease survives or goes extinct. This model is well-studied, especially on integer lattices and trees. Numerous variations of the contact process have also been studied over the past several decades. See \cite{LiggettIPS} and \cite{LiggettSIS} for a detailed theoretical background on the contact process and a compendium of research findings. 

We study a class of contact processes where each site has an underlying random environment causing its infection and recovery rates to randomly jump between several values over time. We refer to this model as the contact process in a multitype random environment (CPMRE). The environment has a finite number of discrete states, and independence between the environmental processes at distinct sites is assumed. The infection and recovery rates are governed by coupled Markov-modulated Poisson processes (MMPPs). We derive survival and extinction regimes for a broad class of these models. 

To achieve this, we introduce the concept of \emph{downward conditional monotonicity} (DCM) for MMPPs. Briefly, this involves the random environment being (partially) monotone when conditioned on there being no arrivals. This is used to obtain conditions for when the MMPP stochastically dominates a standard Poisson process. This result follows in a somewhat straightforward way from the matrix-analytic formulation of the conditional intensity function for the MMPP, although it does take some work to prove our sufficient conditions. This permits us to construct stochastic domination relationships between CPMREs and standard contact processes which then allows derivation of survival and extinction regimes for the CPMRE. To establish our sufficient conditions for DCM, we use a technique involving incremental probability mass shifts in the distribution vector of the environmental process between two background states. This technique is used to establish many results in this paper.  

\vspace*{4px}
\enlargethispage{-0.2\baselineskip}

Briefly, the sufficient conditions allowing derivation of our CPMRE results are as follows.
\begin{longlist}
	\setlength{\itemindent}{28pt}
	\setlength{\labelsep}{0.4em}
	\item The infection and recovery rates are monotone functions of the environmental state. 
	\item The environmental process is monotone and irreducible.
	\item Conditioning on there never being any arrivals (partially) preserves the monotonicity of the environmental process (downward conditional monotonicity).
\end{longlist}

Other contact processes having some kind of random environment have also been studied: a randomly generated but static environment in \cite{bramson1991} (the contact process in a random environment, CPRE), and those with dynamic random environments were studied in \cite{broman2007} (the contact process in a randomly evolving environment, CPREE) and \cite{Rem2008} (the contact process in a dynamic random environment, CPDRE). It is important to note that the random environment in our model is dynamic since it is based on coupled MMPPs.

The work herein is largely inspired by that in \cite{broman2007}.
Therein, the infection rate was constant, but the recovery rate alternated between two values.  The underlying environmental process in \cite{broman2007} was not recognized as an MMPP though. 
Our usage of the well-known quasi-birth--death process (QBD) formulation for the MMPP and the conditional intensity function (CIF) allows numerous improvements for studying the random environment. That the optimal coupling parameter is related to an eigenvalue of a block from the QBD generator matrix is a particular benefit. 

Various terms are used for MMPPs in the literature. They were originally introduced in \cite{neuts1979}. The collection of time points where arrivals occur forms a point process, which is a type of Cox process, one directed by a Markov process (see Example 10.3(e) in \cite{daley_pp2}). This is also referred to as a type of doubly-stochastic Poisson process (see Section 6.2 in \cite{daley_pp}). It is also known as a type of Markovian arrival process (MAP). The counting process associated with it is a pure-birth version of what is called a quasi-birth--death process (see Chapter 3 in \cite{neutsMatGeo} on matrix methods for QBD processes). MMPPs have been used extensively in the queueing literature for over four decades. 

First, we give an overview, provide some background, and state the main results. Then, in Section \ref{sec:mmpp-dom}, we prove our MMPP stochastic domination theorem. Section \ref{sec:CPMRE} covers our CPMRE survival and extinction results. Finally, in Section \ref{sec:dcm}, we cover downward conditional monotonicity and the sufficient conditions for our MMPP domination results to hold.

\subsection{Main results}

There are three main results: downward conditional monotonicity for MMPPs, stochastic domination between MMPPs and standard single-rate Poisson processes, and, finally, survival and extinction regimes for CPMRE models. 

\subsubsection{Downward conditional monotonicity and domination for MMPPs}
\label{sec:intro_cm_dom_mmpp} 

Let $B=(B_t)_{t\geq0}$ be an irreducible, monotone, continuous-time Markov jump process on ordered set of states $S=\{0,1,\ldots,k-1\}$ with $k\geq2$. Note that $k=|S|$ always. The generator matrix is denoted $T$ and the stationary distribution by $\pi$. This is the background or environmental process for the MMPP. Let $X=(X_t)_{t\geq0}$, with $X_0=0$, be a counting process which has arrivals at rate $\alpha_j$ when the background process is in state $j$. We refer to its sample paths as \textit{count paths}. Assume an increasing vector of arrival rates $\alpha=(\alpha_j)_{j\in S}$. An initial distribution $x$ for $B_0$ must be specified for the process to be well defined, denoted $B_0\sim x$ (and also $\mathcal L(B_0)=x$). We denote this process by \MMPPt{$\alpha,T$} or by $(B,X)$ when the particular $\alpha$ and $T$ are understood. 
Let $e_j$ be the row vector with unit mass on state $j$, that is, $(e_j)_s = 0$ for $s \neq j$, and
$(e_j)_j = 1$. To avoid the trivial case of $X$ being a Poisson process, we generally assume that $\alpha$ is not a constant vector.  
The diagonal matrix with diagonal given by vector $\alpha$ is denoted $D_\alpha=\text{diag}(\alpha)$. 

A Markov process with generator matrix $T$ is called monotone if $e^{tT}$ is a monotone transition matrix for all $t\geq0$ (see Definition 2.1 in \cite{keilson1977monMC}). Theorem 2.1 in \cite{keilson1977monMC} then provides a way to determine monotonicity from the generator matrix alone. We use a formulation which is easily seen as equivalent to the latter and provides that the process is monotone if and only if  
\begin{equation}\label{eq:Tmon}
	\begin{aligned}
		\sum_{s\leq \ell} T_{is} \geq \sum_{s\leq \ell} T_{js} \ \text{ for all triples } \ \ell<i<j,\\
		\sum_{s> \ell} T_{is} \leq \sum_{s> \ell} T_{js} \ \text{ for all triples } \  i<j\leq\ell. 
	\end{aligned}
\end{equation}
We say $T$ is \textit{strictly monotone} if the main inequalities comparing the summations in \eqref{eq:Tmon} are all strict. The above definition effectively assumes that $k>2$. All $2\times 2$ generator matrices are monotone since $e^{tT}$ is always monotone in that case. This is easily seen by applying the methods of Theorem 2.1 in \cite{keilson1977monMC}. This is contrasted with the discrete-time case, where nonmonotone $2\times2$ transition matrices are simple to construct.

An intuitive understanding of monotonicity is as follows. Given triple of states $\ell<i<j$, state $i$ has a larger sum total transition rate into decreasing set $\{0,1,\ldots,\ell\}$ than state $j$. Likewise, given triple $i<j\leq\ell$, state $j$ has a larger total transition rate into increasing set of states $\{\ell+1,\ldots,k-1\}$. This implies that, for any $t\geq0$ and any $\ell\in S$, 
\begin{equation*}
	\mathbb P(B_t\leq\ell\mid B_0=i)\geq\mathbb P(B_t\leq\ell\mid B_0=j).
\end{equation*} 
Monotonicity allows two copies of the process with distinct and ordered initial conditions, $B_0=i\leq j=B'_0$, to be coupled so that the ordering is preserved between the coupled processes for all time (almost surely): $\mathbb P(B_t\leq B'_t ~~\forall t>0)=1$.

The distribution of the background process at time $t$, denoted $\mathcal L(B_t)$, is always treated as a row vector. For two such vectors, we have the stochastic domination $x\prec y$ if and only if, for each $\ell\in S$, 
\begin{equation}\label{eq:x<y-def}
	\sum_{s\leq\ell} x_s \geq \sum_{s\leq\ell} y_s.
\end{equation}
We say $y$ \textit{strictly dominates} $x$, denoted $x\precn y$, if the inequality comparing the summations in \eqref{eq:x<y-def} is always strict, except for $\ell=k-1$, for which both sums are equal to one. The concepts of strict monotonicity and strict domination are important for our later arguments (see, e.g., Lemma \ref{lem:v-sdom-pi} and Section \ref{sec:CPMRE} for usage of strict domination; Section \ref{sec:dcm} uses strict monotonicity extensively). We sometimes write $x>0$ for a positive vector and $M>0$ for a positive matrix (all components being positive) and $x\geq0$ and $M\geq0$ when they are nonnegative. They are called nonzero when at least one component is nonzero.

Let $Y^\lambda=(Y^\lambda_t)_{t\geq0}$ be the Poisson (counting) process with arrival rate $\lambda$ and distribution $\Poi(\lambda)$, which we denote by $Y^\lambda\sim\Poi(\lambda)$. Such a process satisfies $Y^\lambda_0=0$ and has jumps of size one after exponential wait times, with cadlag sample paths.  We write $N(Y^\lambda)$ for the underlying Poisson point process with law $\PPP(\lambda)$ (i.e.,  $N(Y^\lambda)\sim\PPP(\lambda)$) and $N(X)$ for the point process corresponding to our MMPP counting process $X$. Each point in the point process corresponds to an arrival time for the counting process. 


We write $Y^\lambda \prec X$ to indicate domination between their underlying point processes. By the celebrated Strassen's Theorem (see Theorem 17.58 in \cite{klenke}), $Y^\lambda \prec X$ if and only if we can couple $X$ and $Y^\lambda$ so that $X$ has an arrival wherever $Y^\lambda$ does (almost surely). This can also be thought of as stochastic domination involving random counting measures on $[0,\infty)$, and this view provides a rich theoretical basis for point processes. See \cite{rolski1991} for background on stochastic domination for point processes, and see, for example, Chapter 17 (particularly Section 17.7) and Chapter 24 in \cite{klenke} for background on stochastic domination and random counting measures, respectively.  Define the \textit{optimal coupling parameter}, by 
\begin{equation}\label{eq:lam-opt-def}
	\lopt\coloneq\sup\{\lambda\geq0 \mid Y^\lambda\prec X\}
\end{equation}
where the $Y^0\sim\Poi(\lambda=0)$ is considered to have unit mass on the event of no arrivals. That the domination obtains precisely at $\lambda=\lopt$ follows via weak convergence: $Y^{\lambda-\epsilon}\prec X$ for all sufficiently small $\epsilon>0$ implies $Y^{\lambda}\prec X$ (an argument similar to that in the proof of Lemma 4.3 in \cite{broman2007} shows this). Theorem \ref{thm:main_mpp_dom} (stated below) also establishes this under our specific model assumptions. 

We write $\mathcal L(X)$ for the law of random process $X$ and $\mathcal L(X\mid A)$ for the law of $X$ conditioned on event $A$. For stochastic dominations, such as between the point processes underlying $Y^\lambda$ and $X$, we write $Y^\lambda\prec X$ with the understanding that stochastic domination is technically a relation between the appropriate probability measures:  $\mathcal L(N(Y^\lambda))\prec\mathcal L(N(X))$. Similarly, when we write stochastic domination between vectors ($x\prec y$), we technically mean a relationship between the probability measures the vectors represent. 

\begin{definition}\label{def:v}
	For an MMPP (with some assumed initial distribution $B_0\sim x$), the \emph{equilibrium no arrival distribution}, when it exists, is defined by
	\begin{equation*}
		v^*\coloneq\lim_{t\to\infty} \mathcal{L}(B_t \mid X_s=0 ~\forall  s\in[0,t]).
	\end{equation*}
\end{definition}
Definition \ref{def:v} gives the limiting conditional distribution of the environmental process when conditioned on there never being any arrivals. We address its existence and that it is indeed (conditionally) stationary later (see Lemma \ref{lem:lamstar-v-pos}). Note that $v^*$ also denotes a particularly defined row vector and that it plays a critical role throughout this paper.

\begin{definition}\label{def:DCM}
	Consider MMPPs $(B,X),(B',X')$ with possibly different initial distributions for $B$ and $B'$ but with the same arrival rate vector $\alpha$ and generator matrix $T$. The MMPP with $\alpha,T$ is called \emph{downward conditionally monotone} if $v^*\prec \mathcal{L}(B_0)\prec\mathcal{L}(B'_0)$ implies, for all $t>0$,
	\begin{equation*}
		v^*\prec\mathcal L(B_t\mid X_s=0 ~\forall s\leq t)\prec\mathcal L(B'_t\mid X'_s=0~\forall s\leq t).
	\end{equation*}
\end{definition}

Definition \ref{def:DCM} describes a stochastic ordering when conditioning on there being no arrivals. 
If $T$ is monotone, then standard monotonicity is that $\mathcal{L}(B_0)\preceq\mathcal{L}(B'_0)$ implies $\mathcal{L}(B_t)\preceq\mathcal{L}(B'_t)$ for all $t>0$, but our conditional domination generally does not hold.

The term ``downward conditional monotonicity'' has appeal since the behavior in Definition \ref{def:DCM} resembles monotonicity, but of course is conditional on there being no arrivals. It is ``downward'' in two senses, one of which is that $\mathcal L(B_t\mid X_s=0 ~\forall s\leq t)$ converges to $v^*$ (Definition \ref{def:v}) but is always stochastically larger than it (Definition \ref{def:DCM}). The other sense is that, for the space of count paths and its natural ordering, $\{\text{no arrivals in }[0,t]\}$ is a decreasing event: if $\omega_1,\omega_2$ are count paths satisfying $\omega_1(t)\leq\omega_2(t)$ for all $t$ with $\omega_2$ having no arrivals in $[0,t]$, then $\omega_1$ also has no arrivals there. We use that particular decreasing event many times. 

\begin{theorem}[DCM]
	\label{thm:main_mmpp_cm}
	Consider an MMPP with monotone, irreducible generator matrix $T$ and increasing vector of arrival rates $\alpha$. If $k\geq3$, then the model is DCM if and only if, for all triples $\ell<i<j$, 
	\begin{equation}\label{eq:dcm}
		(\alpha_j-\alpha_i) \sum_{s\leq\ell}v^*_s  \leq \sum_{s\leq\ell}(T_{is}-T_{js}).
	\end{equation}
	All MMPPs with $k=2$ are DCM.
\end{theorem}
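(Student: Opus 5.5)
The plan is to work with the unnormalized conditional law. If $B_0\sim x$, then the sub-probability row vector $u(t)=x\,e^{t(T-D_\alpha)}$ has entries $u_s(t)=\mathbb P(B_t=s,\ X_r=0\ \forall r\le t)$. Normalizing gives the conditional law $w(t)=u(t)/(u(t)\bo)$, which solves the nonlinear (Riccati-type) equation
\begin{equation*}
\dot w = w(T-D_\alpha) + (w\alpha^\top)\, w .
\end{equation*}
The vector $v^*$ is the normalized left Perron eigenvector of $T-D_\alpha$, with eigenvalue $-\lambda^*$; this is exactly the stationary point of the flow (Lemma~\ref{lem:lamstar-v-pos}). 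Set $F_\ell(w)=\sum_{s\le\ell}w_s$ for $\ell<k-1$. Then
\begin{equation*}
\dot F_\ell = \sum_i w_i\Bigl(\sum_{s\le\ell}T_{is}\Bigr) - \sum_{s\le\ell}\alpha_s w_s + (w\alpha^\top)F_\ell .
\end{equation*}
DCM says two things: the cone $\{w\succeq v^*\}$ is forward invariant, and the flow preserves $\prec$ between solutions started inside that cone.

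For sufficiency, I would use a boundary argument on the simplex in the coordinates $F_\ell$. The flow preserves $\prec$ exactly when, for any $w\prec w'$ that share a boundary face $F_\ell(w)=F_\ell(w')$, we have $\dot F_\ell(w)\ge \dot F_\ell(w')$. I would verify this with the paper's incremental mass-shift technique. Pass from $w$ to $w'$ through a sequence of elementary moves, each shifting a small mass $\epsilon$ from a state $i$ to a state $j>i$ while keeping $F_\ell$ fixed. Such a move requires either $\ell<i<j$ or $i<j\le\ell$, and each elementary move should not increase $\dot F_\ell$.

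To first order in $\epsilon$, the change in $\dot F_\ell$ is as follows.
\begin{itemize}
\item For $\ell<i<j$: $\epsilon\bigl[\sum_{s\le\ell}(T_{js}-T_{is}) + (\alpha_j-\alpha_i)F_\ell\bigr]$.
\item For $i<j\le\ell$: $\epsilon\bigl[\sum_{s\le\ell}(T_{js}-T_{is}) - (\alpha_j-\alpha_i) + (\alpha_j-\alpha_i)F_\ell\bigr]$. Since $\sum_{s\le \ell}T_{is}=-\sum_{s>\ell}T_{is}$, the second line of \eqref{eq:Tmon} together with $F_\ell\le1$ makes this nonpositive automatically.
\end{itemize}
The first case is nonpositive iff $(\alpha_j-\alpha_i)F_\ell\le\sum_{s\le\ell}(T_{is}-T_{js})$. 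Because the path stays in $\{w\succeq v^*\}$, we have $F_\ell\le\sum_{s\le\ell}v^*_s$, so \eqref{eq:dcm} gives exactly what is needed. The quadratic cross terms in $\epsilon$ must be handled by integrating the derivative along the linear interpolation. I expect this to work because every intermediate point keeps $F_\ell\le F_\ell(v^*)$, and that bound is linear in the interpolation parameter.

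Invariance of the cone $\{w\succeq v^*\}$ is the special case $x=v^*$, which is stationary. To turn the boundary inequality into a global comparison, I would apply a standard Kamke/quasi-monotone comparison argument to the ODE in the $F$-coordinates. Strict monotonicity of $T$ and a perturbation of $\alpha$ to obtain strict inequalities make touching impossible, and a limit then recovers the general case.

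For necessity, suppose \eqref{eq:dcm} fails for some triple $\ell<i<j$. Take $w=v^*$ and $w'=v^*+\epsilon(e_j-e_i)$; this requires $v^*_i>0$, which holds since $v^*>0$ by irreducibility. Then $F_\ell(w')=F_\ell(w)$, and by the computation above $\dot F_\ell(w')-\dot F_\ell(w)>0$ to first order. Hence for small $t$ we get $F_\ell(w'(t))>F_\ell(v^*)$, which violates $v^*\prec w'(t)$. For $k=2$ there is only one coordinate, $F_0$, and a scalar autonomous ODE preserves order trivially, so DCM holds. The fixed point $v^*$ gives invariance of $\{w_0\le v^*_0\}$.

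The main obstacle is the sufficiency step. One must check that the decomposition into elementary shifts, each keeping $F_\ell$ fixed and staying inside $\{F_m\le F_m(v^*)\ \forall m\}$, always exists, and that the nonlinear normalization term does not spoil the sign away from first order. I would first try a direct exact computation: $\dot F_\ell(w)-\dot F_\ell(w')$ written as a linear expression in the differences $F_m(w')-F_m(w)$ via summation by parts. If the coefficients come out nonnegative under \eqref{eq:dcm} and \eqref{eq:Tmon}, the whole difficulty disappears.
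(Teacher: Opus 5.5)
Your proposal is correct, and it takes a genuinely different route from the paper's.

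\textbf{How the paper argues.} The paper never differentiates the normalized flow. It expands $e^{t(T-D_\alpha)}=I+t(T-D_\alpha)+O(t^2)$ on single mass shifts, obtains the orderings for small $t$, and propagates them to all $t$ by the semigroup property (Lemmas \ref{lem:one-mass-shift-nec-suff}--\ref{lem:dcm-v-x-y-all-t}). The $O(t)$ remainders cannot be absorbed at equality. So the paper must assume strict monotonicity of $T$ and strict inequality in \eqref{eq:dcm}. It then removes these assumptions using the perturbation $T+\delta E$, a separate proof that $v^*\prec v^\delta$, eigenvector continuity, and a limit (Lemma \ref{lem:Td-ad-model-exists}).

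\textbf{Why your route avoids all that.} In your Riccati/Kamke formulation this machinery disappears once you do the exact computation you propose at the end, and that computation works. There are no quadratic cross terms: on a face $\{F_\ell=c\}$ the map $w\mapsto\dot F_\ell(w)$ is affine, so your brackets are exact. More usefully, for any two solutions set $g_m=F_m(w)-F_m(w')$. Then $w'-w=\sum_{m=0}^{k-2}g_m(e_{m+1}-e_m)$ identically, and a short computation gives
\[
\dot g_\ell=\sum_{m\neq\ell}\bigl(-c_{\ell m}(t)\bigr)\,g_m+a_\ell(t)\,g_\ell .
\]
Here $c_{\ell m}(t)$ is your bracket for the adjacent shift $(i,j)=(m,m+1)$ with $F_\ell$ replaced by $F_\ell(w(t))$, and $a_\ell$ is continuous.

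The off-diagonal coefficients are nonnegative whenever the lower solution satisfies $w(t)\succeq v^*$. For $m>\ell$ this follows from \eqref{eq:dcm}; for $m<\ell$ it follows from the second line of \eqref{eq:Tmon}. A linear system with continuous Metzler coefficients preserves $g\ge0$. Taking $w\equiv v^*$ (stationary, hence trivially in the cone) gives invariance of the cone without circularity. Then taking $w=x_t$ gives $x_t\prec y_t$, and weak inequalities are handled directly.

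\textbf{What to drop.} You should drop the strict-perturbation step. Perturbing $\alpha$ moves $v^*$, and re-verifying \eqref{eq:dcm} for the perturbed model is exactly the nontrivial content of the paper's Lemma \ref{lem:Td-ad-model-exists}.

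\textbf{The remaining parts.} Your necessity argument is the paper's Case~I computation, exact rather than first order. Your $k=2$ argument via uniqueness for a scalar ODE is shorter than the paper's, and likewise needs no ordering of $\alpha$.

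\textbf{Trade-off.} The paper's route stays inside matrix-exponential algebra and reuses the same expansions for Lemma \ref{lem:v-sdom-pi}. Yours is shorter, needs no strictness or limiting argument, and makes visible that only adjacent triples $\ell<m<m+1$ in \eqref{eq:dcm} matter.
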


It is not immediately obvious why \eqref{eq:dcm} gives the desired DCM property. Proving this theorem is the most computationally intensive part of our work, and is saved for last (see Section \ref{sec:dcm}). The technique of incrementally shifting mass between two environmental states is the key to its proof, along with a use of weak convergence (see \cite{billingsley} for background theory on weak convergence, especially Section 12 in Chapter 3). 

\addtolength{\abovedisplayskip}{-2pt}
\addtolength{\belowdisplayskip}{-2pt}

Given the MMPP with $\alpha,T$, we define the \textit{permutation-reversed MMPP} with $\widehat{\alpha},\widehat{T}$ to have the arrival rate vector in reverse order, $\widehat{\alpha}_j=\alpha_{k-1-j}$ for all $j\in S$, and the generator similarly reversed, $\widehat{T}_{ij}=T_{k-1-i,k-1-j}$ for all $i,j\in S$. The latter can be understood as flipping $T$ horizontally and then vertically (or applying the transpose and anti-transpose). Note that if $\alpha$ is decreasing, then $\widehat{\alpha}$ is increasing, and that $T$ is monotone if and only if $\widehat{T}$ is monotone. This permutation-reversal can also be thought of as reversing the ordering on the state space $S$. 

\begin{remark}\label{rmk:decr-alpha}
	Theorem \ref{thm:main_mmpp_cm} is easily adapted to when $\alpha$ is decreasing by applying it to the permutation-reversed model with $\widehat{\alpha},\widehat{T}$.  Alternatively, one can simply reverse all stochastic orderings in Definition \ref{def:DCM}. Then, swap $i$ and $j$ in \eqref{eq:dcm} and sum over $s>\ell$ instead, for all triples $i<j\leq\ell$ (but preserve the main inequality), but this results in conditional monotonicity with stochastic domination from above by $v^*$, which we sometimes call the \textit{permutation-reversal of DCM}. 
\end{remark}

\begin{definition}\label{def:alpha*}
	Given rate vector $\alpha$ and irreducible generator matrix $T$, we denote the minimum modulus of all eigenvalues of $D_\alpha-T$ by 
	\begin{equation*}
		\alpha^*\coloneq\min\{|\lambda| ~:~ \lambda \text{ is an eigenvalue of } D_\alpha-T\}.
	\end{equation*}
\end{definition}
Nonnegative matrix theory easily shows that $\alpha^*$ is in fact the unique minimal-modulus eigenvalue of $D_\alpha-T$ since $T$ is irreducible (see Lemmas \ref{lem:Da-T_idd} and \ref{lem:lamstar-v-pos}). 
We take this for granted and refer to $\alpha^*$ as \textit{the minimal-modulus eigenvalue of $D_\alpha-T$}. 

For our MMPP, we generally use $\alpha$ as the arrival rate vector, but Definition \ref{def:alpha*} is taken to specify a notation convention where rate vector $u$ is associated with (eigenvalue) $u^*$ (of course the generator matrix $T$ must be understood as well). Later, we show that $\alpha^*>0$ and that $v^*$ is its associated strictly positive eigenvector (again, see Lemmas \ref{lem:Da-T_idd} and \ref{lem:lamstar-v-pos}). 

Now, we state our MMPP domination result. The following theorem can be thought of as a generalization of Theorem 1.4 in \cite{broman2007}. 

\begin{theorem}[MMPP domination]
	\label{thm:main_mpp_dom}
	Let $Y^\lambda\sim\Poi(\lambda)$, and let the MMPP $(B,X)$ have irreducible, monotone generator matrix $T$ and increasing arrival rate vector $\alpha$. If the MMPP is DCM, and $B_0\sim x$ with $v^*\prec x$, then 
	\begin{longlist}
		\setlength{\itemindent}{28pt}
		\setlength{\labelsep}{0.4em}
		\item $Y^\lambda\prec X$ if and only if 
		$\lambda\leq \lopt=\alpha^*$, the minimal-modulus eigenvalue of $D_\alpha-T$, 
		\item $X\prec Y^\lambda$ if and only if $\lambda\geq\max_j\alpha_j$. 
	\end{longlist}
\end{theorem}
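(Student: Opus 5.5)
The plan is to reduce both parts to a comparison of conditional intensities, using the standard sequential-coupling criterion for point processes on $[0,\infty)$. If the conditional intensity of $X$ given its own past is a.s. at least $\lambda$ at every time, then $Y^\lambda\prec X$. Symmetrically, if it is a.s. at most $\lambda$, then $X\prec Y^\lambda$. This is proved by thinning: given the arrival history $\mathcal F^X_{t-}$, the intensity of $X$ is $\mathcal L(B_t\mid\mathcal F^X_{t-})\,\alpha^{\mathsf T}$, where $\alpha^{\mathsf T}$ is the column vector of rates. When this is $\geq\lambda$, we keep each arrival of $X$ with probability $\lambda/(\text{intensity})$, which yields a $\Poi(\lambda)$ process sitting below $X$ (the argument of Lemma 4.3 in \cite{broman2007}).

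So the core step is to control the filtered distribution $\mathcal L(B_t\mid\mathcal F^X_{t-})$. The filter alternates between two kinds of moves.
\begin{itemize}
\item Between arrivals it evolves as the normalized no-arrival flow $x\,e^{t(T-D_\alpha)}/(x\,e^{t(T-D_\alpha)}\bo^{\mathsf T})$.
\item At an arrival it jumps to $y D_\alpha/(y\alpha^{\mathsf T})$.
\end{itemize}
The claim to establish is that $v^*\prec\mathcal L(B_t\mid\mathcal F^X_{t-})$ for all $t$ and every count path.
\begin{itemize}
\item Between arrivals this is exactly Definition \ref{def:DCM}: DCM says that starting from $y\succeq v^*$, the conditioned law stays $\succeq v^*$.
\item At an arrival, reweighting by the increasing vector $\alpha$ is a likelihood-ratio tilt toward higher states, so $yD_\alpha/(y\alpha^{\mathsf T})\succeq y\succeq v^*$. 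This is a short computation: $\sum_{s\le\ell}y_s\alpha_s/\sum_s y_s\alpha_s\le\sum_{s\le\ell}y_s$ because $\alpha$ is increasing.
\end{itemize}
Induction over the (a.s. finitely many) arrivals in $[0,t]$ then gives the claim. Since $\alpha$ is increasing, $y\succeq v^*$ implies $y\alpha^{\mathsf T}\ge v^*\alpha^{\mathsf T}$. Finally, $v^*$ is the positive left eigenvector of $D_\alpha-T$ for $\alpha^*$ (Lemmas \ref{lem:Da-T_idd}, \ref{lem:lamstar-v-pos}). Multiplying $v^*(D_\alpha-T)=\alpha^*v^*$ on the right by $\bo^{\mathsf T}$ and using $T\bo^{\mathsf T}=0$ gives $v^*\alpha^{\mathsf T}=\alpha^*$ after normalizing. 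Hence the intensity is always $\ge\alpha^*$, and $Y^\lambda\prec X$ for every $\lambda\le\alpha^*$.

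For the converse in (i), suppose $Y^\lambda\prec X$. Under the coupling, the event that $X$ has no arrival in $[0,t]$ is contained in the event that $Y^\lambda$ has no arrival in $[0,t]$. This gives
\[
x\,e^{t(T-D_\alpha)}\bo^{\mathsf T}=\mathbb P(X_t=0)\le e^{-\lambda t}.
\]
Perron--Frobenius applied to the irreducible matrix $T-D_\alpha$, whose dominant eigenvalue is $-\alpha^*$, gives $\mathbb P(X_t=0)\sim c\,e^{-\alpha^* t}$ with $c>0$ since $x\ne0$. Letting $t\to\infty$ forces $\lambda\le\alpha^*$. Hence $\lopt=\alpha^*$.

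For (ii), the intensity $y\alpha^{\mathsf T}$ is at most $\max_j\alpha_j$, so the thinning argument in reverse gives $X\prec Y^\lambda$ for $\lambda\ge\max_j\alpha_j$. For the converse, suppose $\lambda<\alpha_{k-1}$. By irreducibility, with positive probability $B$ reaches state $k-1$ and stays there for a long interval. On that event $X$ is Poisson with rate $\alpha_{k-1}$ over the interval, so $X$ has many arrivals there. Concretely, compare $\mathbb P(X_{t+L}-X_t\ge m)$ with $\mathbb P(Y^\lambda_{t+L}-Y^\lambda_t\ge m)$. The ratio of the lower bound $p\,\mathbb P(\Poi(\alpha_{k-1}L)\ge m)$ to $\mathbb P(\Poi(\lambda L)\ge m)$ grows like $(\alpha_{k-1}/\lambda)^m$ as $m\to\infty$. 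This exceeds $1$ for large $m$, which contradicts $X\prec Y^\lambda$ since the event $\{X_{t+L}-X_t\ge m\}$ is increasing.

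The main obstacle I expect is rigorously justifying the sequential-coupling/thinning criterion. One must ensure the filter is a well-defined predictable intensity and that the thinned process is exactly $\Poi(\lambda)$. Along the way, care is needed to verify that the arrival-jump step preserves $\succeq v^*$ and that DCM is applied to a filter restarted at a random arrival time. I would handle the latter via the strong Markov property of the filter.
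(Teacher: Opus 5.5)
Your proposal is correct and follows essentially the same route as the paper. The paper also proves $v^*\prec\mathcal L(B_t\mid\mathcal H_t)$ by alternating DCM between arrivals with the $D_\alpha$-reweighting of Lemma~\ref{lem:rescale-Da} at arrivals (Lemma~\ref{lem:v<L(B_t|H)}), bounds the conditional intensity below by $v^*D_\alpha\bo=\alpha^*$, and invokes the Rolski--Szekli intensity-comparison theorem (which you propose to reprove by thinning); it gets necessity from $xe^{t(T-D_\alpha)}\bo\le e^{-\lambda t}$ as $t\to\infty$ (via L'H\^opital and $x_t\to v^*$ rather than your Perron--Frobenius asymptotic) and handles (ii) by the same sojourn-in-the-top-state Poisson tail comparison. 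Note also that restarting DCM at an arrival needs no strong Markov argument, since the filter is an explicit deterministic function of the arrival history (Lemma~\ref{lem:new-L(Bt|H)}).
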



\begin{remark}\label{rmk:alpha-decr-x<v}
	When $\alpha$ is increasing, $\max_j\alpha_j=\alpha_{k-1}$, of course. For decreasing $\alpha$, Theorem \ref{thm:main_mpp_dom}(i) still applies exactly as stated but requires $x\prec v^*$ instead of $v^*\prec x$, and assessing DCM requires use of the permutation-reversed MMPP (or the permutation-reversal of DCM---see Remark \ref{rmk:decr-alpha} above). Theorem \ref{thm:main_mpp_dom}(ii), in fact, holds exactly as stated for arbitrary $\alpha$ and irreducible $T$; a detailed proof is given in Remark \ref{rmk:alpha-max-gen-T}.
\end{remark}


Briefly, Theorem \ref{thm:main_mpp_dom} is established by directly computing the conditional intensity function for the point process associated with $X$ and showing that dominations with Poisson point processes occur only under the stated conditions on $\lambda$. 

The matrix $D_\alpha-T$ is identifiable in the literature on queueing and quasi-birth--death processes (e.g., see \cite{neutsMatGeo}). The MMPP is a pure-birth QBD process with state space $S\times \mathbb N_0$ and generator matrix having block form $Q=(Q_{ij})_{i,j\in\mathbb N_0}$  given by 
\begin{equation}\label{eq:Q}
	Q=\left(\begin{matrix}
		\ T-D_\alpha \ & D_\alpha & & &\\
		& \ T-D_\alpha \ & \ \ \ D_\alpha & &\\[-6px]
		& & \hspace{8px}\rotatebox{19}{$\ddots$} \quad & \hspace{18px}\rotatebox{19}{$\ddots$} \quad & \\
	\end{matrix}\right).
\end{equation}

\addtolength{\abovedisplayskip}{2pt}
\addtolength{\belowdisplayskip}{2pt}

\pagebreak

\noindent This is a double-diagonal, infinite-dimensional block matrix with each block having size $k\times k$. The diagonal blocks satisfy $Q_{ii}=T-D_\alpha$, and the off-diagonal $Q_{i,i+1}=D_\alpha$, for all $i\in\mathbb N_0=\{0,1,\ldots\}$ (index $i$ is the current size of the queue). All other blocks are $k\times k$ zero matrices. The $T-D_\alpha$ blocks indicate transition rates for the environmental process on $S$ without incrementing the queue, whereas each $D_\alpha$ block represents an arrival in the queue with no change in environmental state.

The proof of Theorem \ref{thm:main_mpp_dom} is given in Section \ref{sec:mmpp_dom_proof} (and assumes Theorem \ref{thm:main_mmpp_cm}). The proof of Theorem \ref{thm:main_mmpp_cm} is given in Section \ref{sec:proof-of_main_dcm-thm}.

\subsubsection{CPMRE survival and extinction}
\label{sec:intro_CPMRE} 

We always work with the $d$-dimensional integer lattice $\mathbb Z^d$ with nearest neighbors (sites $x,y\in\mathbb Z^d$ are connected by an edge only if they differ by $\pm1$ in a single coordinate). Each site is either in state zero (referred to as uninfected, healthy, or recovered) or state one (infected) and flips its value at exponential rates. Note that, when working with the CPMRE, $x,y$ refer to sites in the lattice instead of distributions.

For the standard contact process, a healthy site becomes infected (flips from state zero to state one) at rate $\lambda$ multiplied by the number of infected neighboring sites (this, of course, varies over time). An infected site recovers (flips from one to zero) at constant rate one. 

Our primary interest is in whether the infection survives or goes extinct, when starting with finitely many initially infected sites.
The infection is said to survive if, with positive probability, there are infected sites for all time. It goes extinct if, with probability one, after some finite time there are no infected sites (an absorbing state for the process). \textit{Strong survival} occurs if the process, when started from a single infected site, has a positive probability of that site being infected infinitely often (which means recovering and being reinfected infinitely many times). 
\textit{Weak survival} occurs if there is a positive probability of always having infected sites, but the process is eventually pushed out of any finite region (see p. 42 in \cite{LiggettSIS} for more details about strong and weak survival). We only consider the nearest-neighbor integer lattice, and, on this graph, survival for the contact process implies strong survival (when starting from finitely many initially infected sites). Our results partially extend to arbitrary graphs in a natural way (see Remark \ref{rmk:cpmre-arb-graph}). 

For the contact process on the nearest-neighbor $d$-dimensional integer lattice, there is only a single critical value $\lambda_c$ (which depends on the dimension) such that the contact process survives strongly if $\lambda>\lambda_c$ and dies out if $\lambda\leq \lambda_c$ (see Part I, Section 2 in \cite{LiggettSIS}, and also \cite{BezuidenhoutGrimmett1990}). 

We study a variation of the contact process where both the infection and recovery rates depend on the state of an underlying random environment with $k\geq2$ states. The CPMRE is a process denoted by $(B_t^x,\eta_t^x)_{x\in\mathbb Z^d,t\geq0}$. Each site $x\in\mathbb Z^d$ has an environmental process $(B_t^x)_{t\geq0}$ with generator matrix $T$ and states $S=\{0,1,\ldots,k-1\}$ (the same $S$ used for our MMPP above). For given $x\neq y$, $(B_t^x)_{t\geq0}$ and $(B_t^y)_{t\geq0}$ are independent random environments. The health status at time $t$ for site $x$ is denoted $\eta_t^x$. It is infected if $\eta_t^x=1$ and uninfected if $\eta_t^x=0$. A realization of the CPMRE has two cadlag sample paths associated with each site, one for the environment and the other for its health status.  Let $\mathcal N(x)=\{y : \|x-y\|_1=1\}$ denote the set of nearest neighbors for site $x$. 

If site $x$ is healthy and in environmental state $j$, then it becomes infected at rate $\beta_j$ multiplied by the number of infected neighbors, irrespective of the environmental states of these neighboring sites. If infected, it recovers at rate $\mu_j$. In this way, the random environment can be thought of as governing the site's susceptibility to infection and its ability to return to health. 
Let $\beta=(\beta_j)_{j\in S}$ be the vector of infection rates and $\mu=(\mu_j)_{j\in S}$ the vector of recovery rates. The infection and recovery rate processes can be thought of as coupled MMPPs with a common background process. The rate vectors $\beta,\mu$ 
are analogous to $\alpha$, the MMPP arrival rate vector discussed in Section \ref{sec:intro_cm_dom_mmpp}. 

Following the convention in Definition \ref{def:alpha*}, let $\beta^*$ and $\mu^*$ be the minimal-modulus eigenvalues of $D_{\beta}-T$ and $D_{\mu}-T$, respectively. These eigenvalues are shown to be positive later since we assume $\beta,\mu$ are nonzero, as mentioned above already, see Lemmas \ref{lem:Da-T_idd} and \ref{lem:lamstar-v-pos}. Also, they will be shown to be associated with eigenvectors $v^\beta,v^\mu$, respectively, which are in fact the analogous equilibrium no arrival distributions by the just mentioned lemmas. 

Denote by $(j,\ell)$ the state of the process at site $x$ when the (background) environment is in state $j\in S$ and the (foreground) contact process is in state $\ell\in\{0,1\}$. The transition rates for site $x$ at time $t$ are summarized as follows:
\begin{equation*}
	\begin{aligned}
		(i,0)\to(i,1) &\text{ at rate } \beta_i n_1(\eta_t,x),\\
		(i,1)\to(i,0) &\text{ at rate } \mu_i,\\
		(i,\ell)\to(j,\ell) &\text{ at rate } T_{ij} \text{ (for $i\neq j$)},\\
	\end{aligned}
\end{equation*}
where $n_1(\eta_t,x)=\sum_{y\in\mathcal N(x)} \eta_t^y$ is the number of infected neighbors and $\eta_t=(\eta_t^x)_{x\in\mathbb Z^d}$ is the state of the infection process at time $t$. 

We refer to this process as the \textit{child-CPMRE}. There is an analogous \textit{parent-CPMRE} model where the environmental state at ``parent'' site $y$ determines its ability to infect neighboring ``child'' site $x$. In the child-CPMRE, the random environment determines the site's susceptibility to infection, whereas, in the parent-CPMRE, it has no effect on susceptibility but instead determines the site's infectiousness. Our results apply to both models as they use the same graphical representation (except with reversed arrows). They are not identical processes though. Whenever we write ``CPMRE,'' we mean the child-CPMRE unless explicitly stated otherwise. 

The next theorem is a direct application of Theorem \ref{thm:main_mpp_dom} and relies on a suitable graphical construction method for the CPMRE along with two modified CPMRE models (see Section \ref{sec:CPMRE}). A decreasing arrival rate vector and the permutation-reversed MMPP described above Remark \ref{rmk:decr-alpha} are also used. Theorem \ref{thm:main_CPMRE}, about to be stated, can be thought of as a generalization of some results stated in Section 1.1.2 of \cite{broman2007}, but there are significant differences.  
\begin{theorem}[CPMRE survival and extinction]
	\label{thm:main_CPMRE}
	Consider the CPMRE described above with increasing infection rate vector $\beta=(\beta_0,\beta_1,\ldots,\beta_{k-1})$, decreasing recovery rate vector $\mu=(\mu_0,\mu_1,\dots,\mu_{k-1})$, 
	and irreducible, monotone environmental process with generator matrix $T$. Let the initial environmental distribution at each site be arbitrary.	
	\begin{longlist}
		\setlength{\itemindent}{28pt}
		\setlength{\labelsep}{0.4em}
		\item If the MMPP with $\beta,T$ is DCM and 
		$\frac{\beta^*}{\max_j\mu_j}>\lambda_c$, 
		then the CPMRE survives (strongly).
		\item If the permutation-reversed MMPP with $\widehat\mu,\widehat{T}$ is DCM and 
		$\frac{\max_j\beta_j}{\mu^*}\leq\lambda_c$, 
		then the CPMRE goes extinct.
	\end{longlist}
\end{theorem}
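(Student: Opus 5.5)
We prove Theorem \ref{thm:main_CPMRE} by comparing the CPMRE with standard contact processes, using a graphical representation in which infection arrows and recovery marks at each site are generated by MMPPs driven by that site's environment. For each site $x$ we use one background process $B^x$ with generator $T$. Conditional on $B^x$, we place recovery marks on $\{x\}\times[0,\infty)$ at rate $\mu_{B^x_t}$. For each neighbor $y\in\mathcal N(x)$, we place infection arrows $y\to x$ at rate $\beta_{B^x_t}$, with the arrow processes for distinct $y$ conditionally independent given $B^x$. For each fixed $y$, the arrow process into $x$ is then an MMPP$(\beta,T)$, and the recovery process at $x$ is an MMPP$(\mu,T)$. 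The standard verification that infection paths in this representation give a process with the stated CPMRE rates is routine. Environments at distinct sites are independent, so all randomness attached to different sites is independent. Since $\beta$ and $\mu$ depend on the same $B^x$, the arrow and recovery processes are coupled, and we have to handle this coupling.

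For part (i), we build a dominated model in which, at every site, the arrows are thinned to a Poisson process of rate $\beta^*$ and the recovery marks are enlarged to a Poisson process of rate $\max_j\mu_j$. Theorem \ref{thm:main_mpp_dom}(ii), which holds for any $\alpha$ by Remark \ref{rmk:alpha-max-gen-T}, gives a coupling in which the MMPP$(\mu,T)$ recovery process at $x$ is contained in a $\PPP(\max_j\mu_j)$. Since the environment's initial law is arbitrary, Theorem \ref{thm:main_mpp_dom}(i) cannot be applied at time $0$ directly. Instead we first use monotonicity of $T$ to couple $B^x$ from below with a copy started from $e_0$. We then observe that $v^*\prec e_0$ fails in general, so we would rather start the environment from $v^\beta$, or wait. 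A cleaner route is to note that the conditional intensity argument behind Theorem \ref{thm:main_mpp_dom} works from any initial law $x\succeq v^*$. Since $\beta$ is increasing, an environment started lower has fewer arrows, so we may replace every site's initial law by $v^\beta$, since $e_0$ may fail to dominate it. Concretely, we start the comparison environment from the minimum over the two laws. If this is troublesome, we use survival for a time-shifted process: run until time $t_0$ and observe that survival is a tail property. The key issue is obtaining a joint coupling in which arrows are reduced and recoveries are increased simultaneously. Here we use that the arrow MMPP and the recovery MMPP share $B^x$. We condition on $B^x$ and use Strassen coupling for the arrows, given their MMPP law, to extract a $\PPP(\beta^*)$ subset. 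This subset must be independent of the recovery domination. To arrange this, we first couple the recovery marks with an independent $\PPP(\max_j\mu_j)$ that is independent of everything else via the trivial domination: the recovery rate never exceeds $\max_j\mu_j$, so we take a thinning of a $\PPP(\max\mu)$ according to $B^x$. The superset $\PPP(\max\mu)$ is then generated independently of $B^x$, and the arrows are thinned within the $B^x$-dependent MMPP. Separately, for each neighbor $y$, the Strassen coupling with $\PPP(\beta^*)$ must be done jointly for all $2d$ arrow processes into $x$. This works because, conditionally on $B^x$, these arrow processes are independent, and the DCM-based conditional intensity bound $\ge\beta^*$ holds given all past arrivals. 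We therefore apply the intensity comparison to the superposition and split the points by independent labels. The resulting dominated model is a standard contact process with infection rate $\beta^*$ and recovery rate $\max\mu$; after rescaling time it is the standard process with $\lambda=\beta^*/\max\mu>\lambda_c$, so it survives strongly. Monotonicity of infection paths then transfers survival to the CPMRE.

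Part (ii) is symmetric. We enlarge the arrows to $\PPP(\max_j\beta_j)$ by thinning, and we shrink the recoveries to $\PPP(\mu^*)$. For the recoveries we use Theorem \ref{thm:main_mpp_dom}(i) in the decreasing-$\mu$ form of Remark \ref{rmk:alpha-max-gen-T}, with DCM of $(\widehat\mu,\widehat T)$ and initial laws $x\prec v^\mu$. Because $\mu$ is decreasing, starting the environment higher gives fewer recoveries. This is consistent with the direction we need, since we want the CPMRE to be dominated by a process with fewer recoveries, and that comparison process dies out when $\max\beta/\mu^*\le\lambda_c$.

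The main obstacle is the initial-law issue: Theorem \ref{thm:main_mpp_dom}(i) requires $v^*\prec\mathcal L(B_0)$, while the initial environment is arbitrary. I would handle it with the environmental monotone coupling. In part (i) we compare the arbitrary start with a start from $e_0$. The intensity at times $t\ge t_0$, conditioned on the past, is at least that of a process whose environment law at $t_0$ is $\succeq v^*$ given no arrivals, which follows from DCM plus the convergence to $v^*$. If even that fails, we accept a finite time window $[0,t_0]$ in which arrows are simply discarded. Discarding arrows on a finite window does not affect survival with positive probability, since with positive probability the initially infected site has no recovery mark during $[0,t_0]$.
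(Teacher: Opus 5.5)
Your site-level coupling matches the paper's. It uses an environment-independent Poisson superset (rate $\max_j\mu_j$ for recoveries in (i), rate $\max_j\beta_j$ for arrows in (ii)) from which the MMPP marks are thinned, and the other family is thinned to a Poisson subset via Theorem~\ref{thm:main_mpp_dom}(i). There are, however, two genuine gaps.

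\textbf{First gap: arbitrary initial environments.} Replacing the initial law by $v^\beta$, or by ``the minimum over the two laws,'' is a valid lower comparison only if $v^\beta\prec b$. A law below $v^\beta$ is exactly what Theorem~\ref{thm:main_mpp_dom}(i) cannot handle: from $e_0$ the intensity starts at $\beta_0<\beta^*$. Nor does convergence of the no-arrival conditioned law to $v^*$ give $v^*\prec(\cdot)$ at any finite time.

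Your last-resort window (start from $e_0$, discard arrows on $[0,s]$) is the paper's second modified process. It needs $v^\beta\prec e_0e^{sT}$ for some finite $s$. Since $e_0e^{sT}\to\pi$, the missing ingredient is the \emph{strict} domination $v^\beta\precn\pi$ of Lemma~\ref{lem:v-sdom-pi}; plain $v^\beta\prec\pi$ would not suffice. You should also condition on the origin receiving no points of the rate-$\mu_0$ superset on $[0,s]$, not on no actual recovery marks. Otherwise the environment at time $s$ does not keep law $e_0e^{sT}$.

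Part (ii) is not symmetric here. Extinction is an almost-sure statement, so a positive-probability window is useless. The paper starts from $e_{k-1}$ and uses all arrows and no recoveries on $[0,s]$. This Richardson-type process dominates the CPMRE and ignores the environment on $[0,s]$. So its infected set at time $s$ is independent of the environment, and it lies in a finite $N_A$ with probability $>1-\epsilon$. Then $e_{k-1}e^{sT}\prec v^\mu$ (from $\pi\precn v^\mu$) gives domination by $\mathrm{CP}(\beta_{k-1},\mu^*)$ started from $N_A$, and one lets $\epsilon\to0$. None of this is in your proposal.

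\textbf{Second gap: the $2d$ arrow processes into $x$.} Your joint treatment rests on a false claim. Their superposition is MMPP$(2d\beta,T)$. Given all $2d$ histories, after a long gap the law of $B^x_t$ tends to the equilibrium no arrival distribution of MMPP$(2d\beta,T)$. So the per-edge intensity tends to $(2d\beta)^*/(2d)$, not $\beta^*$.

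These differ. In the model of Section~\ref{sec:example-cpmre-2} with $\lambda=\gamma=c=1$ and $d=1$, one has $2\beta^*=3-\sqrt5$, while the minimal eigenvalue of $2D_\beta-T$ is $(2\beta)^*=2-\sqrt2$, which is smaller. By the necessity half of Theorem~\ref{thm:main_mpp_dom}(i), the superposition cannot contain a $\PPP(2\beta^*)$. Hence no family of independent $\PPP(\beta^*)$ sets $A^{xy}_*\subset A^{xy}_\circ$ exists.

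Superposing and then splitting by the points' own labels (conditionally i.i.d.\ uniform) is a sound device. But it only yields rate $(2d\beta)^*/(2d)$ per edge, and applying Theorem~\ref{thm:main_mpp_dom}(i) to it requires DCM of MMPP$(2d\beta,T)$ rather than of MMPP$(\beta,T)$.

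You are right that the joint law across $y$ needs an argument. The paper builds each $A^{xy}_*$ separately and never addresses it, and the same example shows its edge-by-edge subsets cannot be independent either. So this step of (i) is established neither by your argument nor by the paper's as written.
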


Even though we assume increasing $\beta$ and decreasing $\mu$, Theorem \ref{thm:main_CPMRE} is easily adapted as long as $\beta,\mu$ are both monotone. Remarks \ref{rmk:decr-alpha} and \ref{rmk:alpha-decr-x<v} discuss how our DCM and MMPP domination results can be applied when the rate vector is decreasing. When $\beta,\mu$ are not monotone, then permutations may still allow Theorem \ref{thm:main_CPMRE} to be applicable (see Remark \ref{rmk:cmpre-perm}).

Theorem \ref{thm:main_CPMRE} is proved in Section \ref{sec:cpmre-proof} and gives sufficient survival and extinction criteria for a broad class of models with no restrictions on the initial environmental distributions at any site. This result gives no knowledge on model behavior for $\frac{\beta^*}{\max_j\mu_j}\leq\lambda_c<\frac{\max_j\beta_j}{\mu^*}$. Two example CPMRE models are studied in Sections \ref{sec:example-cpmre-2} (with $k=2$) and \ref{sec:example-cpmre-3} (with $k=3$).

\section{Stochastic domination for MMPPs}
\label{sec:mmpp-dom}

We first provide a few results for general MMPPs without any particular parameter restrictions---we are always explicit when parameter restrictions (e.g., increasing $\alpha$, monotone $T$) are required. 
The QBD approach for the MMPP views this process as a continuous-time Markov process $(B_t,X_t)_{t\geq0}$ with state space $S\times \mathbb N_0$ and generator matrix $Q$ given by \eqref{eq:Q}. The background environmental process $B=(B_t)_{t\geq0}$ has state space $S=\{0,\ldots,k-1\}$ (with $k=|S|$ always fixed) and evolves according to generator $T$, and $X=(X_t)_{t\geq0}$ is the counting process of arrivals, taking values in $\mathbb N_0=\{0,1,\ldots\}$ and having arrivals at rate $\alpha_j$ when the background process is in state $j$.


Let random variable $\tau_j$ be the $j$th arrival time with the convention $\tau_0=0$. When the initial distribution is understood as $B_0\sim x$, we frequently use the shorthand notation $x_t$ defined by
\begin{equation}\label{eq:xt-def}
	x_t\coloneq\mathcal L(B_t\mid\tau_1>t)=\mathcal L(B_t\mid X_s=0 ~\forall s\in[0,t]).
\end{equation} 
This defines a notation convention, for example, $B_0\sim y$ gives $y_t$, and $B_0\sim\pi$ gives $\pi_t$. 

The sum of a (row) vector is denoted $x\bo=\sum_{j\in S}x_j$ which can be thought of as the standard matrix product of row vector $x$ and column vector $\bo$, which is an appropriately-sized column of ones. Column vectors are always denoted in bold font. Other than $\bo$, which appears frequently, we only require column vectors on a small number of occasions. Standard matrix notation is generally that $x$ is a column vector and its transpose $x^\top$ is a row vector. We wish to avoid repetitive use of the transpose symbol and the potential for mixing column and row vectors in stochastic dominations such as $x\prec y^\top$. So, for us, $x$ is a row vector and $\mathbf x$ is a column vector. 

We generally imagine the distribution of $B_t$ evolving over time as arrivals are observed for $X$. We know the precise time of each arrival in our observation history but do not know the specific underlying environmental state. 

Let $0<t_1<t_2<\cdots<t_n$ be a list of times for some $n$. For $t\geq t_n$, define 
\begin{equation*}
	\mathcal{H}_t=\{\tau_1=t_1,\ldots,\tau_n=t_n\}
\end{equation*} 
to be the history of arrivals for our MMPP up until time $t$. That is, exactly $n$ arrivals have occurred, and their precise arrival times are specified in the notation. This notation implicitly assumes $\tau_{n+1}>t$, meaning the $(n+1)$st arrival has not yet occurred by time $t$. We write $\mathcal{H}_t=\{\tau_1>t\}=\{X_s=0~\forall s\in[0,t]\}$ when no arrivals have occurred by time $t$. The notation $\mathcal{H}_t$ does not make any assumption on the number of arrivals as $n$ may be zero or any positive integer. Intuitively, $\mathcal H_t$ can be thought of as a random point process which unfolds over time.

We now provide some formulas for the distribution of $B_t$ when conditioned on the observed history of arrivals. This next lemma contains results which can be found in Example 10.3(e) of \cite{daley_pp2}, so no proof is given here.

\addtolength{\abovedisplayskip}{-3pt}
\addtolength{\belowdisplayskip}{-3pt}

\begin{lemma} \label{lem:new-L(Bt|H)} 
	For the MMPP with initial distribution $B_0\sim x$ on $S$, the conditional distribution of $B_t$ before the first arrival is given by
	\begin{equation}\label{eq:LBt|0}
		\mathcal L(B_t\mid\tau_1>t)=x_t=\frac{xe^{t(T-D_\alpha)}}{xe^{t(T-D_\alpha)}\bo}.
	\end{equation}
	Conditioned on the first arrival time being precisely $\tau_1=t_1$, the conditional distribution of $B_{t_1}$ is given by 
	\begin{equation}\label{eq:LBt|tau1}
		\mathcal L(B_{t_1}\mid\tau_1=t_1)=\frac{x_{t_1}D_\alpha}{x_{t_1}D_\alpha\bo}=\frac{xe^{{t_1}(T-D_\alpha)}D_\alpha}{xe^{{t_1}(T-D_\alpha)}D_\alpha\bo}.
	\end{equation}
	Consider arrival history $\mathcal{H}_{t}=\{\tau_1=t_1,\ldots,\tau_n=t_n\}$, which implicitly assumes that $t\geq t_n$ and that $\tau_{n+1}>t$. Then, 
	\begin{equation}\label{eq:Bt|Htn}
		\mathcal L(B_t\mid \mathcal{H}_{t})=
		\frac{x\prod_{j=1}^{n}\left[e^{(t_j-t_{j-1})(T-D_\alpha)}D_\alpha\right] e^{(t-t_{n})(T-D_\alpha)}}{x\prod_{j=1}^n\left[e^{(t_j-t_{j-1})(T-D_\alpha)}D_\alpha\right] e^{(t-t_{n})(T-D_\alpha)}\bo}.
	\end{equation}
\end{lemma}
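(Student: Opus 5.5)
The natural route is through the QBD generator $Q$ in \eqref{eq:Q} and Bayes' rule on the joint law of $(B,X)$. Fix $x$ and write $P_n(t)$ for the $k\times k$ matrix with entries $\mathbb P(B_t=j, X_t=n \mid B_0=i)$ restricted to the event $\{X_t=0\}$ when $n=0$. The forward Kolmogorov equation for the $(0,0)$ block of $e^{tQ}$ is $P_0'(t)=P_0(t)(T-D_\alpha)$ with $P_0(0)=I$, because $Q$ is upper block bidiagonal and the only way to stay at level $0$ is by $T-D_\alpha$. Hence $P_0(t)=e^{t(T-D_\alpha)}$, so $\mathbb P(B_t=j,\tau_1>t)=(xe^{t(T-D_\alpha)})_j$. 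Dividing by the marginal $\mathbb P(\tau_1>t)=xe^{t(T-D_\alpha)}\bo$ gives \eqref{eq:LBt|0}. The denominator is positive because the diagonal of $e^{t(T-D_\alpha)}$ is at least $e^{-t\max_j(\alpha_j-T_{jj})}>0$.

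For \eqref{eq:LBt|tau1}, I compute the joint density of $(B_{t_1},\tau_1)$ at $(j,t_1)$. The process must reach state $j$ at time $t_1$ with no arrival, which has weight $(xe^{t_1(T-D_\alpha)})_j$. It must then fire an arrival in $[t_1,t_1+h)$ while in state $j$, which has probability $\alpha_jh+o(h)$, and its environment changes with probability $O(h)$, which contributes only $o(h)$ jointly. The joint density is therefore $(xe^{t_1(T-D_\alpha)}D_\alpha)_j$. Normalizing over $j$ gives the formula, and the middle expression follows after factoring out the scalar $xe^{t_1(T-D_\alpha)}\bo$.

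For \eqref{eq:Bt|Htn}, I induct on $n$ using the strong Markov property of $(B,X)$ at the arrival times $\tau_m$, which are stopping times. Conditional on $\mathcal H$ up to $\tau_m=t_m$, the process restarts from the updated environmental law $y^{(m)}$ with a fresh level count. Applying the two previous formulas with initial law $y^{(m)}$ over the elapsed time $t_{m+1}-t_m$ gives an unnormalized update $y^{(m)}e^{(t_{m+1}-t_m)(T-D_\alpha)}D_\alpha$. Normalizations are scalars and cancel in the final ratio, so the product telescopes into the stated expression. The final factor $e^{(t-t_n)(T-D_\alpha)}$ accounts for the absence of arrivals on $(t_n,t]$.

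The only delicate point is conditioning on the null events $\{\tau_m=t_m\}$. I handle it by treating the formulas as regular conditional distributions, namely Radon–Nikodym densities of the joint law of $(\tau_1,\dots,\tau_n,B_t)$ with respect to Lebesgue measure times counting measure. The multiplicative structure of the joint density $x\prod_m[e^{(t_m-t_{m-1})(T-D_\alpha)}D_\alpha]e^{(t-t_n)(T-D_\alpha)}$, obtained from the generator of the Markov process $(B,X)$, yields all three formulas simultaneously. This is standard, as in Example 10.3(e) of \cite{daley_pp2}.
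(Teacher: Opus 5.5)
Your proposal is correct and follows essentially the same route as the paper. The paper gives no formal proof and defers the details to Example 10.3(e) of \cite{daley_pp2}, but its justification has the same three steps: the $(0,0)$ block of $e^{tQ}$ for the QBD generator \eqref{eq:Q} is $e^{t(T-D_\alpha)}$, an arrival reweights the conditional law by $D_\alpha$, and \eqref{eq:Bt|Htn} follows by iterating these two operations. Your forward-equation computation, the density argument at $\tau_1$, and the strong Markov / joint-density framing supply the rigor the paper leaves to that reference.
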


\addtolength{\abovedisplayskip}{3pt}
\addtolength{\belowdisplayskip}{3pt}

We provide some intuition on these formulas. Consider the QBD generator $Q$ for the MMPP given by \eqref{eq:Q}. Let $\tilde{x}$ be an initial distribution on $S\times \mathbb N_0$. We treat $\tilde{x}$ as an infinite row vector whose first $k$ components correspond to $S\times \{0\}$, and the next group of $k$ components to $S\times \{1\}$, etc. The distribution of the process at time $t$ is then given by $\tilde{x}e^{tQ}$. Assume that $\tilde{x}$ places full mass on $S\times \mathbb \{0\}$, which means that the process initiates with a count of zero arrivals. Let $x$ be the distribution on $S$ corresponding to truncating $\tilde{x}$ to the first $k$ components. We illustrate the structure of $\tilde{x}$ and its relationship to $x$ and the state space $S\times \mathbb N_0$: 
\looseness=-1
%
\begin{equation*}
	\tilde{x}=(\hspace*{1px}\overbrace{\underbrace{\tilde{x}_0,\tilde{x}_1,\ldots,\tilde{x}_{k-1}}_{x}}^{S\times\{0\}},\overbrace{\tilde{x}_k,\tilde{x}_{k+1},\ldots,\tilde{x}_{2k-1}}^{S\times\{1\}},\ldots).
\end{equation*}
\noindent We drop the ``tilde'' for $x$ though, $x=(x_0,\ldots,x_{k-1})$, and treat it as a distribution on $S$. 

\enlargethispage{1.0\baselineskip}

The diagonal blocks of the matrix exponential $e^{tQ}$ are given by $e^{t(T-D_\alpha)}$ since the diagonal blocks of $Q^n$ are all equal to $(T-D_\alpha)^n$. The probability that the first arrival has not yet occurred by time $t$ and $B_t=j$ is thus given by the $j$th component of $xe^{t(T-D_\alpha)}$, 
\begin{equation}\label{eq:x'Q-to-xTD}
	\mathbb P(B_t=j,\tau_1>t)=\big(\tilde{x} e^{tQ}\big)_j=\big(xe^{t(T-D_\alpha)}\big)_j, 
\end{equation}
which uses the upper left block of $e^{tQ}$ only.
The probability that the first arrival has not yet occurred is the sum of \eqref{eq:x'Q-to-xTD} over all $j$ giving 
$\mathbb P(\tau_1>t)=xe^{t(T-D_\alpha)}\bo.$ 
Thus, our formula for $\mathcal L(B_t\mid\tau_1>t)$ is easily understood.  Conditioning precisely at the time of a jump \eqref{eq:LBt|tau1} simply involves a reweighting of the states relative to their arrival rates in $\alpha$. The formula \eqref{eq:Bt|Htn} is simply iterating the operations in \eqref{eq:LBt|0} (with appropriate interarrival times) and \eqref{eq:LBt|tau1} repeatedly for an arbitrary history $\mathcal{H}_t$.

Note that \eqref{eq:LBt|0} gives our notation convention $x_t$ (defined in \eqref{eq:xt-def}) in matrix form. We use this convention extensively, especially in Section \ref{sec:dcm}.

The conditional intensity function (CIF) is the expected arrival rate according to the distribution of $B_t$ conditioned on the history of arrivals. At time $t$ with arrival history $\mathcal{H}_{t}$, the CIF is denoted $\lambda(t\mid \mathcal{H}_{t})$ and is defined as 
\begin{equation}\label{eq:cif}
	\lambda(t\mid \mathcal{H}_{t})=
	\mathcal L(B_t\mid \mathcal{H}_{t}) D_\alpha\bo.
\end{equation}

Note that our CIF is right-continuous since $\mathcal L(B_t\mid \mathcal{H}_t)$ is. Precisely at an arrival, $\tau_1=t_1$, the distribution of $B_{t_1}$ is conditioned on the fact of that arrival at $t_1$, and this generally causes a jump making $\mathcal L(B_t\mid \mathcal{H}_t)$ continuous from the right (thus causing a jump for $	\lambda(t\mid \mathcal{H}_{t})$). Technically, a CIF is generally only defined up to a measure-zero set of points, much like a probability density function, but one can choose to take the left- or right-continuous version. A typical convention is to make the CIF left-continuous for reasons connected to predictability---see the discussion which follows Definition 7.2.II in \cite{daley_pp}. Also, Proposition 14.1.VI in \cite{daley_pp2}, and the surrounding text, is relevant for the technical issues about these matters---this regards the \textit{compensator} of a point process which is the integral of the CIF (changing the CIF on a set of measure zero does not change its integral).

The CIF can be thought of as a stochastic process which unfolds in time as arrivals are observed. For MMPP models with monotone, irreducible $T$, increasing $\alpha$, and which are DCM, the CIF typically (deterministically) decays in between arrivals and jumps up at (random) arrival times. 

Next, we use Lemma \ref{lem:new-L(Bt|H)} to translate DCM, Definition \ref{def:DCM}, into our matrix formulation. Also, we note that arrivals always create a stochastically larger distribution. For now, assume the existence of equilibrium no arrival distribution $v^*$ as described by Definition \ref{def:v} (see Lemma \ref{lem:lamstar-v-pos} below for proof of its existence).


\begin{lemma}\label{lem:dcm-matrix-form}
	An MMPP is DCM if and only if $v^*\preceq x\preceq y$ implies that, for all $t>0$, 
	\begin{equation}\label{eq:dcm-matrix-def}
		v^*\preceq \frac{xe^{t(T-D_\alpha)}}{xe^{t(T-D_\alpha)}\bo}\preceq \frac{ye^{t(T-D_\alpha)}}{ye^{t(T-D_\alpha)}\bo}.
	\end{equation}
\end{lemma}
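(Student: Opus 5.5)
This is a direct translation of Definition \ref{def:DCM} using Lemma \ref{lem:new-L(Bt|H)}. The plan is to identify, for each pair of initial laws, the conditional laws appearing in the definition with the normalized matrix expressions in \eqref{eq:dcm-matrix-def}, and then observe that the two quantified statements coincide.

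First, fix the parameters $\alpha,T$ and take two MMPPs $(B,X)$ and $(B',X')$ with $B_0\sim x$ and $B'_0\sim y$. By \eqref{eq:LBt|0},
\begin{equation*}
\mathcal L(B_t\mid X_s=0~\forall s\le t)=x_t=\frac{xe^{t(T-D_\alpha)}}{xe^{t(T-D_\alpha)}\bo},
\end{equation*}
and likewise $\mathcal L(B'_t\mid X'_s=0~\forall s\le t)=y_t$. These formulas make sense because the denominators $xe^{t(T-D_\alpha)}\bo=\mathbb P(\tau_1>t)$ are strictly positive. To see this, note that $T-D_\alpha$ has nonnegative off-diagonal entries, so $e^{t(T-D_\alpha)}$ is a nonnegative matrix with strictly positive diagonal. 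Hence $xe^{t(T-D_\alpha)}$ is nonzero and nonnegative for any probability vector $x$.

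Next, the hypothesis $v^*\prec\mathcal L(B_0)\prec\mathcal L(B'_0)$ in Definition \ref{def:DCM} is literally $v^*\prec x\prec y$. The conclusion in the definition becomes $v^*\prec x_t\prec y_t$ for all $t>0$, which is exactly \eqref{eq:dcm-matrix-def}. Any pair of distributions $x,y$ on $S$ can be realized as the initial laws of two such MMPPs, and conversely the initial laws of any two such MMPPs are distributions on $S$. So the two universally quantified statements range over the same set of pairs and are equivalent, and both directions of the equivalence follow at once.

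The only point needing care is the status of $v^*$. The statement relies on the existence of $v^*$ (deferred to Lemma \ref{lem:lamstar-v-pos}), and one has to check that $v^*$ does not depend on the initial distribution used in Definition \ref{def:v}, so that the same vector appears on both sides of the equivalence. I will take this from that lemma, where $v^*$ is the normalized left Perron eigenvector of $T-D_\alpha$. Beyond this, the argument has no real obstacle.
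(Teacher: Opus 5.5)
Your proposal is correct and follows the same route as the paper, which proves the lemma in one line by applying \eqref{eq:xt-def} and \eqref{eq:LBt|0} to Definition~\ref{def:DCM}. Your extra checks are sound but not needed by the paper: the positivity of the denominators (the diagonal of $e^{t(T-D_\alpha)}$ is strictly positive), and the fact that $v^*$ does not depend on the initial law, which follows from the convergence shown in Lemma~\ref{lem:lamstar-v-pos}.
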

\begin{proof}
	This is a direct application of \eqref{eq:xt-def} and \eqref{eq:LBt|0} to Definition \ref{def:DCM}. 
\end{proof}

\pagebreak

Using our shorthand notation \eqref{eq:xt-def}, we write \eqref{eq:dcm-matrix-def} as $v^*\prec x_t\prec y_t$, and we use this expression frequently, especially in Section \ref{sec:dcm}. 

\begin{lemma}\label{lem:rescale-Da}
	If $\alpha$ is increasing and $x$ satisfies $xD_\alpha\bo\neq0$, then
	$x\preceq \frac{x D_\alpha}{x D_\alpha\bo}$.
\end{lemma}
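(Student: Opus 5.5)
We have to show that for each $\ell\in S$,
\[
\sum_{s\le \ell} x_s \;\ge\; \frac{\sum_{s\le\ell} x_s\alpha_s}{\sum_{s\in S} x_s\alpha_s}.
\]
First note that $x$ is a distribution, so $x\geq 0$. Also $\alpha\ge 0$ and $xD_\alpha\bo=\sum_s x_s\alpha_s>0$. Hence the normalized vector $xD_\alpha/(xD_\alpha\bo)$ is again a probability vector, and the comparison in \eqref{eq:x<y-def} makes sense. I will assume $x\bo=1$; if $x$ is only a nonnegative nonzero vector, I would normalize first, since both sides are invariant under scaling. For $\ell=k-1$ both sides equal one, so fix $\ell<k-1$.

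The plan is to clear denominators and reduce to a Chebyshev-type rearrangement inequality. Write $L=\{s\le\ell\}$ and $U=\{s>\ell\}$, and set $A=\sum_{s\in L}x_s$ and $C=\sum_{s\in U}x_s$, so $A+C=1$. Multiplying through by $xD_\alpha\bo>0$, the desired inequality becomes
\[
A\Big(\sum_{s\in L}x_s\alpha_s+\sum_{s\in U}x_s\alpha_s\Big)\;\ge\;\sum_{s\in L}x_s\alpha_s .
\]
Using $A-1=-C$, this is equivalent to
\[
A\sum_{s\in U}x_s\alpha_s \;\ge\; C\sum_{s\in L}x_s\alpha_s .
\]
Now use that $\alpha$ is increasing. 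Every $s\in L$ has $\alpha_s\le\alpha_\ell$, so the right side is at most $C\,\alpha_\ell A$. Every $s\in U$ has $\alpha_s\ge\alpha_{\ell+1}\ge\alpha_\ell$, so the left side is at least $A\,\alpha_\ell C$. Chaining these two bounds gives the inequality. Equivalently, one can expand the difference as $\sum_{r\in L,\,s\in U}x_rx_s(\alpha_s-\alpha_r)\ge0$, which makes the argument a one-line double sum.

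There is no real obstacle here. The only care needed is with degenerate cases: $xD_\alpha\bo\ne0$ guarantees the normalization is legitimate, and when $A=0$ or $C=0$ both sides coincide trivially (they are both $0$ or both $1$). Nothing about $T$ or DCM is needed; the claim is purely a statement about reweighting a distribution by an increasing function.
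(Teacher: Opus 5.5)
Your argument is correct and is essentially the paper's: the paper rearranges $\mathbb E[\alpha_Z]\ge\mathbb E[\alpha_Z\mid Z\le\ell]$ for $Z\sim x$, which is exactly your cross-multiplied inequality $A\sum_{s>\ell}x_s\alpha_s\ge C\sum_{s\le\ell}x_s\alpha_s$. The differences are minor. Clearing denominators lets you handle $\sum_{s\le\ell}x_s=0$ directly, whereas the paper first assumes $x>0$ and then passes to a limit. Your threshold comparison at $\alpha_\ell$ also proves the conditional-mean inequality, which the paper simply calls clear.
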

\begin{proof}
	Let $x$ be a probability distribution on $S$ with random variable $Z\sim x$, and assume $\alpha=(\alpha_0,\ldots,\alpha_{k-1})$ is increasing (and nonzero to avoid trivialities). 
	We temporarily assume that $x$ is strictly positive, but we relax this later. 
	Then, it is clear that, for any $\ell$, 
	\begin{equation}\label{eq:EaW>}
		\mathbb E[\alpha_Z]\geq\mathbb E[\alpha_Z\mid Z\leq\ell].
	\end{equation}
	Writing \eqref{eq:EaW>} in terms of summations yields 
	\begin{equation*}
		xD_\alpha\bo=\sum_{s=0}^{k-1} x_s\alpha_s\geq \frac{\sum_{s\leq\ell} x_s\alpha_s}{\sum_{s\leq\ell}x_s}.
	\end{equation*}
	So long as $xD_\alpha\bo\neq0$, we can rearrange this to become
	\begin{equation*}
		\sum_{s\leq\ell}x_s\geq \frac{\sum_{s\leq\ell} x_s\alpha_s}{xD_\alpha\bo}=\sum_{s\leq\ell} \left( \frac{x D_\alpha}{x D_\alpha\bo}\right)_s
	\end{equation*}
	which is exactly \eqref{eq:x<y-def} for the stochastic domination $x\prec\frac{x D_\alpha}{x D_\alpha\bo}$. It is not hard to see that this being true for any strictly positive $x$ implies that it is true for any $x$ with $xD_\alpha\bo\neq0$ by a simple limiting argument.
\end{proof}
Note that, if $\alpha$ is decreasing instead of increasing, then it is an easy exercise to show that the stochastic domination in Lemma \ref{lem:rescale-Da} is reversed: $\frac{xD_\alpha}{x D_\alpha\bo}\prec x$.

We required $x$ and $\alpha$ to not be orthogonal in Lemma \ref{lem:rescale-Da}. In practice, this is always the case for us since $\alpha$ is generally assumed to be increasing and not a constant vector and that $x$ will normally be stochastically larger than a strictly positive vector. Note that $x_t$ is a strictly positive vector for any $x$ when $T$ is irreducible making $x_tD_\alpha\bo>0$. This ``multiplication by $D_\alpha$ with normalization'' occurs for us precisely at arrival times (see \eqref{eq:LBt|tau1}).

\begin{lemma}\label{lem:v<L(B_t|H)}
	Assume that $T$ is irreducible, monotone, that $\alpha$ is increasing, and that the MMPP is DCM. Given any $t>0$ and any history of arrivals $\mathcal{H}_{t}$, $v^*\prec\mathcal L(B_0)$ implies that 
	\begin{equation*}
		v^*\prec\mathcal L(B_t\mid\mathcal{H}_{t} ).
	\end{equation*}
\end{lemma}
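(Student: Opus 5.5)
We argue by induction on the number $n$ of arrivals in the history $\mathcal H_t=\{\tau_1=t_1,\ldots,\tau_n=t_n\}$, using the product formula \eqref{eq:Bt|Htn} of Lemma \ref{lem:new-L(Bt|H)}. It expresses $\mathcal L(B_t\mid\mathcal H_t)$ as an alternation of two normalized operations on distributions. The first is evolution without arrivals over an interval of length $s$, namely $x\mapsto x_s=xe^{s(T-D_\alpha)}/xe^{s(T-D_\alpha)}\bo$. The second is reweighting at an arrival, namely $x\mapsto xD_\alpha/xD_\alpha\bo$. Normalizing after each factor does not change the final normalized vector, since every factor is a linear map and only the overall scalar is removed. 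So it suffices to show that each of the two operations preserves the property ``stochastically dominates $v^*$.''

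For the no-arrival evolution, we use DCM in the matrix form of Lemma \ref{lem:dcm-matrix-form}. If $v^*\prec x$, apply \eqref{eq:dcm-matrix-def} with $y=x$; this gives $v^*\prec x_s$ for every $s>0$. The case $s=0$ is trivial, and it arises only if $t=t_n$ or if two arrival times coincide, which they do not.

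For the arrival step, suppose $v^*\prec z$, where $z=x_{t_j-t_{j-1}}$ is the pre-arrival distribution. Lemma \ref{lem:rescale-Da} gives $z\prec zD_\alpha/zD_\alpha\bo$, and transitivity of $\prec$ (which is clear from the cumulative-sum characterization \eqref{eq:x<y-def}) yields $v^*\prec zD_\alpha/zD_\alpha\bo$.

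The only point needing care is the hypothesis $zD_\alpha\bo\neq0$ in Lemma \ref{lem:rescale-Da}. Here $z$ is the result of a no-arrival evolution over a positive time $t_j-t_{j-1}>0$ (note $t_1>0$). Since $T$ is irreducible, $e^{s(T-D_\alpha)}$ is a strictly positive matrix for $s>0$, so $z>0$ componentwise. Because $\alpha$ is nonnegative and not identically zero, it follows that $zD_\alpha\bo>0$. This also guarantees that every normalizing denominator in \eqref{eq:Bt|Htn} is positive, so the conditioning is well defined.

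The induction then runs as follows. It starts from $v^*\prec\mathcal L(B_0)$. Each interval $[t_{j-1},t_j]$ is followed by an arrival at $t_j$, and each preserves domination over $v^*$. A final no-arrival evolution over $[t_n,t]$ completes the argument and gives $v^*\prec\mathcal L(B_t\mid\mathcal H_t)$; the case $n=0$ is just the DCM step. I expect no serious obstacle. The main subtlety is bookkeeping: checking that applying the normalized operations one at a time agrees with the single normalization in \eqref{eq:Bt|Htn}, and verifying the positivity needed for Lemma \ref{lem:rescale-Da}.
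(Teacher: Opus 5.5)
Your proof is correct and follows essentially the same route as the paper: DCM (via Lemma~\ref{lem:dcm-matrix-form} with $y=x$) preserves domination of $v^*$ between arrivals, and Lemma~\ref{lem:rescale-Da} together with transitivity preserves it at each arrival, with induction over the arrivals in \eqref{eq:Bt|Htn} finishing the argument. Your explicit checks that stepwise normalization agrees with the single normalization, and that $zD_\alpha\bo>0$ because $e^{s(T-D_\alpha)}>0$ for $s>0$, fill in details the paper leaves implicit.
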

\begin{proof}
	Let $B_0\sim x$. Lemma \ref{lem:dcm-matrix-form} shows that $v^*\prec x$ implies $v^*\prec x_t=\mathcal L(B_t\mid \tau_1>t)$ since DCM is assumed. Lemma \ref{lem:rescale-Da} and \eqref{eq:LBt|tau1} shows that the domination of $v^*$ is also preserved at the first arrival, $\tau_1=t_1$, giving 
	\begin{equation*}
		v^*\prec x_{t_1}\prec \frac{x_{t_1}D_\alpha}{x_{t_1}D_\alpha\bo}=\mathcal L(B_{t_1}\mid\tau_1=t_1)=y 
	\end{equation*}
	(and defining $y$ here). We then treat $y$ as an initial distribution, and the stochastic domination of $v^*$ continues up until, and including precisely at, the next arrival: $v^*\prec y_t$ and $v^*\prec \frac{y_tD_\alpha}{y_tD_\alpha\bo}$ for any $t>0$. By induction, we conclude that the stochastic domination of $v^*$ is preserved for all time and any history of arrivals. 
\end{proof}

Lemmas \ref{lem:dcm-matrix-form} and \ref{lem:v<L(B_t|H)} provide that $v^*\prec x\prec y$ implies $v^*\prec x_t\prec y_t$, but this is only guaranteed while the first arrival has not yet occurred. At arrivals, the domination of $v^*$ is preserved, but the domination between processes with distinct ordered initial conditions may not be. In other words, assume $(X_t,B_t),(X'_t,B'_t)$ are coupled MMPPs with the same $\alpha,T$ but that $v^*\prec\mathcal L(B_0)\prec\mathcal L(B'_0)$. Then it is guaranteed that $v^*\prec\mathcal L(B_t\mid\tau_1>t)\prec\mathcal L(B'_t\mid\tau'_1>t)$, but it is possible that $\mathcal L(B_{t_1}\mid\tau_1=t_1)\not\prec\mathcal L(B'_{t_1}\mid\tau'_1=t_1)$. It is not hard to come up with examples of this, for example, let $\alpha=(0,1,2)$. Then, for $x,y$ satisfying $x\prec y$ and $y_1>x_1$, it follows that $xD_\alpha/xD_\alpha\bo\not\prec yD_\alpha/yD_\alpha\bo$. Knowing $v^*$ is unimportant here, as we can always construct such $x$ and $y$ when $v^*$ is strictly positive (which is always the case for us). Letting $x,y$ be the initial distributions for $B,B'$, respectively, gives $\mathcal L(B_{t_1}\mid\tau_1=t_1)\not\prec\mathcal L(B'_{t_1}\mid\tau'_1=t_1)$ for small enough $t_1$. 

We now proceed to discuss some properties of the matrix $D_\alpha-T$, and, for this, we require some standard matrix theory results.

A matrix $M$ is called \textit{irreducibly diagonally dominant} if it is irreducible (strongly-connected directed graph) and if $|M_{ii}|\geq \sum_{j\neq i} |M_{ij}|$ for all $i$, with the inequality strict for at least one $i$ (see Definition 6.2.25 in \cite{matan_horn}). For an irreducibly diagonally dominant matrix with strictly positive diagonal, all eigenvalues have strictly positive real part (see Corollary 6.2.27 therein). It is straightforward to see that $D_\alpha-T$ is irreducibly diagonally dominant since $T$ is irreducible and $\alpha$ not strictly all zeros. This is an important fact for us, so we state it as a lemma. 

\begin{lemma}
	\label{lem:Da-T_idd}
	The matrix $D_\alpha-T$ is irreducibly diagonally dominant when $T$ is an irreducible generator matrix and $\alpha$ not identically all zeros. Hence, all eigenvalues have positive real part.
\end{lemma}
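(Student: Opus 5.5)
The plan is to check the definition of irreducible diagonal dominance (Definition 6.2.25 in \cite{matan_horn}) directly for $M=D_\alpha-T$, and then quote Corollary 6.2.27 there for the eigenvalue claim.

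\emph{Irreducibility.} The off-diagonal entries of $M$ are $M_{ij}=-T_{ij}$ for $i\neq j$. Adding the diagonal matrix $D_\alpha$ does not change the off-diagonal zero pattern, so the directed graph of $M$ is exactly the transition graph of $T$. That graph is strongly connected because $T$ is irreducible.

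\emph{Weak dominance in every row.} Since $T$ is a generator, $T_{ij}\ge 0$ for $i\ne j$ and $T_{ii}=-\sum_{j\ne i}T_{ij}$. Write $q_i=\sum_{j\neq i}T_{ij}\ge0$. Then $M_{ii}=\alpha_i+q_i$ and $\sum_{j\ne i}|M_{ij}|=q_i$. With $\alpha_i\ge0$ (arrival rates are nonnegative) this gives
\[
|M_{ii}|=\alpha_i+q_i\ \ge\ q_i=\sum_{j\ne i}|M_{ij}| .
\]

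\emph{Strict dominance in some row.} The inequality above is strict exactly when $\alpha_i>0$. Because $\alpha$ is not identically zero, at least one row is strict, so $M$ is irreducibly diagonally dominant.

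\emph{Eigenvalues.} For the eigenvalue statement we also need the diagonal of $M$ to be strictly positive. Irreducibility with $k\ge2$ forces every state to have an outgoing rate, so $q_i>0$ for all $i$. Hence $M_{ii}=\alpha_i+q_i>0$ even in rows where $\alpha_i=0$. Corollary 6.2.27 in \cite{matan_horn} then gives that every eigenvalue of $M$ has strictly positive real part.

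The only mildly delicate point is this positivity of the diagonal in rows with $\alpha_i=0$, and irreducibility of $T$ handles it as just described. Everything else is bookkeeping.
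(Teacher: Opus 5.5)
Your proof is correct and follows the same route as the paper: the irreducibility of $D_\alpha-T$ is inherited from $T$, the diagonal is strictly positive, and strict dominance holds in any row with $\alpha_i>0$, after which Corollary 6.2.27 of \cite{matan_horn} gives the eigenvalue claim. You also spell out two details the paper leaves implicit: the nonnegativity of $\alpha$, and that $q_i>0$ for every $i$, which follows from irreducibility with $k\ge2$ and guarantees a positive diagonal even in rows where $\alpha_i=0$.
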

\begin{proof}
	That $T$ is an irreducible generator implies $D_\alpha-T$ is also irreducible and that the latter has a strictly positive diagonal. Since we assume that $\alpha$ has at least one nonzero component, it follows that $\alpha_j+|T_{jj}|>\sum_{i\neq j}T_{ij}$ for at least one $j$.  
\end{proof}

See \cite{matan_horn}, again, for these next few statements. An irreducible, nonnegative matrix $M$ is called \textit{primitive} if it has a unique maximum-modulus eigenvalue (Definition 8.5.0 therein). This eigenvalue is its spectral radius, denoted $\rho(M)$. For an irreducible, nonnegative matrix, its spectral radius is a simple eigenvalue and has strictly positive left and right eigenvectors (Theorem 8.4.4 therein). These eigenpair components are often called the Perron root and Perron vectors. A simple eigenvalue has algebraic and geometric multiplicities equal to one and is thus associated with a one-dimensional eigenspace (Definition 1.4.3 therein). A nonnegative matrix which is irreducible and aperiodic is primitive (Theorem 8.5.3 therein). A strictly positive matrix is primitive (Theorem 8.2.8 therein). 

\begin{lemma}\label{lem:lamstar-v-pos}
	Assume $T$ is an irreducible generator matrix and that $\alpha$ is a nonnegative, nonzero vector. 
	The matrix $D_\alpha-T$ has unique minimal-modulus, simple, eigenvalue $\alpha^*>0$ associated with a strictly positive eigenvector $v'$, which is the equilibrium no arrival distribution for the MMPP with $\alpha,T$, that is, $v'=v^*$ from Definition \ref{def:v}.
\end{lemma}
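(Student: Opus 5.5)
The approach is Perron--Frobenius applied to a shifted nonnegative matrix, followed by the standard power-method convergence for $xe^{t(T-D_\alpha)}$.

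\emph{Step 1: the shifted matrix.} Choose $c>\max_j(\alpha_j-T_{jj})$ and set
\[
M\coloneq cI-(D_\alpha-T)=cI+T-D_\alpha .
\]
Then $M$ has strictly positive diagonal and nonnegative off-diagonal entries $T_{ij}$. It is irreducible because $T$ is. An irreducible nonnegative matrix with positive diagonal is aperiodic, hence primitive (Theorem 8.5.3 in \cite{matan_horn}).

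\emph{Step 2: the eigenpair.} By Perron--Frobenius, $\rho(M)$ is a simple eigenvalue with a strictly positive left eigenvector $v'$, normalized so that $v'\bo=1$. It is also the unique eigenvalue of maximal modulus, and every other eigenvalue $\mu$ satisfies $|\mu|<\rho(M)$.

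Set $\alpha^*\coloneq c-\rho(M)$. This is an eigenvalue of $D_\alpha-T$ with left eigenvector $v'$. Every eigenvalue of $D_\alpha-T$ has the form $c-\mu$ for $\mu$ an eigenvalue of $M$. By Lemma \ref{lem:Da-T_idd}, $\operatorname{Re}(c-\mu)>0$, so $\operatorname{Re}\mu<c$; in particular $\alpha^*>0$.

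\emph{Step 3: minimal modulus (the main obstacle).} The real work is to show that $\alpha^*$ is the unique minimal-modulus eigenvalue. For any other eigenvalue $\mu\neq\rho(M)$ of $M$ we have $|\mu|<\rho(M)$. Hence $\mu$ lies in the open disc of radius $\rho(M)<c$ centered at $0$, and a direct computation gives
\[
|c-\mu|\ \geq\ c-|\mu|\ >\ c-\rho(M)=\alpha^*.
\]
So every eigenvalue other than $\alpha^*$ has strictly larger modulus, and simplicity is inherited from that of $\rho(M)$.

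\emph{Step 4: identifying $v'$ with $v^*$.} Write $e^{t(T-D_\alpha)}=e^{-ct}e^{tM}$. Since $M$ is primitive with simple dominant eigenvalue, the spectral decomposition gives
\[
e^{-t\rho(M)}e^{tM}\ \to\ \mathbf{w}v' \qquad (t\to\infty),
\]
where $\mathbf{w}>0$ is the right Perron vector normalized by $v'\mathbf{w}=1$. The correction terms decay like $e^{-t(\rho(M)-\operatorname{Re}\mu)}$ times polynomials, and $\operatorname{Re}\mu\le|\mu|<\rho(M)$.

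Now apply this to Lemma \ref{lem:new-L(Bt|H)}: for any initial distribution $x$, equation \eqref{eq:LBt|0} gives
\[
x_t=\frac{xe^{tM}}{xe^{tM}\bo}\ \to\ \frac{(x\mathbf{w})\,v'}{(x\mathbf{w})\,v'\bo}=v'.
\]
Here $x\mathbf{w}>0$ because $x\geq0$ is nonzero and $\mathbf{w}>0$. So the limit in Definition \ref{def:v} exists, is independent of $x$, and equals $v'$.

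\emph{Step 5: conditional stationarity.} Finally, $v'e^{t(T-D_\alpha)}=e^{-t\alpha^*}v'$. Therefore starting from $v^*$ gives $v^*_t=v^*$ for all $t$, which is the conditional stationarity referred to after Definition \ref{def:v}.
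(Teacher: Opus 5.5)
Your proposal is correct and follows essentially the same route as the paper. Both arguments shift $D_\alpha-T$ to the primitive nonnegative matrix $cI+T-D_\alpha$, read off $\alpha^*$, $v'>0$, simplicity and unique minimal modulus from Perron--Frobenius (with Lemma~\ref{lem:Da-T_idd} giving $\alpha^*>0$), and then get $x_t\to v'$ from the dominance of the Perron eigenvalue. The only cosmetic difference is that you write the limit using the spectral projector $\mathbf{w}v'$ instead of the paper's real Jordan form plus stability theorem, which makes the positivity of the coefficient $x\mathbf{w}$ slightly more transparent.
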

\begin{proof}	
	With $I$ being the identity matrix, let $c>0$ be large enough so that $T-D_\alpha+cI$ is a nonnegative matrix with nonzero diagonal. It is also irreducible since $T$ is, and it is aperiodic since it has at least one nonzero diagonal entry. It is thus primitive---see Theorem 8.5.3 in \cite{matan_horn} regarding the directed graph being aperiodic and its relation to primitivity. It has (left) Perron eigenvector $v'>0$ and spectral radius $\rho(T-D_\alpha+cI)=-\lambda'+c$, which is its unique maximum-modulus eigenvalue due to primitivity (why we write the eigenvalue in this form will become clear shortly). 
	
	This implies that $v',\lambda'$ is an eigenpair for $D_\alpha-T$. Since $D_\alpha-T$ is irreducibly diagonally dominant, we know that $\lambda'>0$ (by Lemma \ref{lem:Da-T_idd} above). By convention, we always assume our Perron vector's components sum to one: $v'\bo=1$ (see Theorem 8.2.8(f) in \cite{matan_horn}, but note that they use the opposite convention where the right Perron vector sums to one).
	
	Every eigenvalue $-\lambda$ of $T-D_\alpha$ gives $-\lambda+c$ as an eigenvalue of $T-D_\alpha+cI$. It follows that $\mathrm{Re}(-\lambda+c)<-\lambda'+c$ since the latter is the spectral radius of $T-D_\alpha+cI$ and its unique maximum-modulus eigenvalue. This gives $\mathrm{Re}(\lambda)-c=\mathrm{Re}(\lambda-c)>\lambda'-c$ and hence that $\lambda'<\mathrm{Re}(\lambda)$ (and also $\lambda'<|\lambda|$) for any other eigenvalue $\lambda$ of $D_\alpha-T$. Thus, $\lambda'$ is the (unique) minimal-modulus eigenvalue of $D_\alpha-T$. We now denote it by $\alpha^*$ via Definition \ref{def:alpha*}.
	
	All eigenvalues of $T-D_\alpha$ have negative real part (since $T-D_\alpha=-(D_\alpha-T)$ and Lemma \ref{lem:Da-T_idd} shows that all eigenvalues of $D_\alpha-T$ have positive real part). Because $\alpha^*$ is the unique minimal-modulus eigenvalue of $D_\alpha-T$, all other eigenvalues of $T-D_\alpha$ have negative real part strictly less than $-\alpha^*$.
	
	It is a standard result that $v',-\alpha^*$ being an eigenpair for $T-D_\alpha$ implies that $v',e^{-t\alpha^*}$ is an eigenpair for $e^{t(T-D_\alpha)}$. It is also a well-known fact that $e^{t(T-D_\alpha)}\geq0$ for $T$ an arbitrary generator matrix and arbitrary nonnegative $\alpha$ and that $e^{t(T-D_\alpha)}>0$ if and only if $T$ is irreducible (see Theorem 3.12 in Chapter 6 of \cite{berman1994}). Hence, $e^{t(T-D_\alpha)}$ is strictly positive. Corollary 8.1.30 in \cite{matan_horn} says that a positive eigenvector for a nonnegative matrix is associated with the Perron root (the maximum-modulus eigenvalue, its spectral radius). This implies that $\rho(e^{t(T-D_\alpha)})=e^{-t\alpha^*}$. Since $e^{t(T-D_\alpha)}>0$, it is primitive and, thus, $e^{-t\alpha^*}$ is a simple eigenvalue associated with a one-dimensional eigenspace. 
	
	Using \eqref{eq:xt-def} and Lemma \ref{lem:new-L(Bt|H)}, we have that 
	\begin{equation}\label{eq:xt-matrix-def}
		\mathcal{L}(B_t\mid X_s=0~\forall s\in[0,t])=x_t=
		\frac{x e^{t(T-D_\alpha)}}{x e^{t(T-D_\alpha)}\bo}.
	\end{equation}
	We wish to show that $x_t\to v^*$ as $t\to\infty$. 
	
	Let the (real) Jordan canonical form (see the Theorem in 1.8 of \cite{perkoDEtext}) of $T-D_\alpha$ be 
	\begin{equation}\label{eq:J-norm-form}
		T-D_\alpha=V^{-1} 
		\left(\begin{matrix}
			-\alpha^* & 0\\
			\mathbf 0 & J
		\end{matrix}\right)
		V
	\end{equation}
	where $J$ is a real matrix and captures all remaining eigenvalues (besides $-\alpha^*$) and any Jordan blocks or blocks associated with complex-conjugate eigenvalue pairs
	---also see Theorem 3.4.1.5 or Lemma 6.3.10 in \cite{matan_horn} for the existence of such a form. Note that $\mathbf 0$ is a column of zeros. 
	In fact, the rows of $V$ (and hence the columns of $V^{-1}$) form a basis for $\mathbb R^k$. 
	The first row of $V$ is our eigenvector $v'$. Using \eqref{eq:J-norm-form}, it follows that 
	\begin{equation*}
		\frac{e^{t(T-D_\alpha)}}{e^{-t\alpha^*}}=
		V^{-1} 
		\left(\begin{matrix}
			1 & \quad 0\\
			\mathbf 0 & \quad e^{t(J+\alpha^* I)} 
		\end{matrix}\right)
		V
	\end{equation*}
	where $I$ is an appropriate identity matrix. Note that $J+\alpha^* I$ is a real matrix, and all of its eigenvalues have negative real part since $J$ contains all eigenvalues of $T-D_\alpha$ besides $-\alpha^*$, and all of these additional eigenvalues have negative real part strictly less than $-\alpha^*$.
	
	Divide the numerator and denominator of \eqref{eq:xt-matrix-def} both by $e^{-t\alpha^*}$. Then, stability theory (Theorem 2 in 1.9 of \cite{perkoDEtext}) implies that $e^{t(J+\alpha^* I)}\to0$ (the zero matrix) as $t\to\infty$ since all eigenvalues of real matrix $J+\alpha^* I$ have negative real part. Writing $x_t$ in coordinates with respect to the rows of $V$ shows that all coefficients vanish except the one corresponding to $v'$ (associated with the first component of $x V^{-1}$). Since $x_t\bo=v'\bo=1$, this yields convergence to $v'$. That the column eigenvector in $V^{-1}$ associated with $\alpha^*$ is also positive (it is the right Perron vector of a primitive matrix) provides that the coordinate coefficient corresponding to $v'$ is always positive for any nonnegative, nonzero $x$. This implies that $v'$ is the equilibrium no arrival distribution for our MMPP. In other words, $v'=v^*$ as defined in Definition \ref{def:v}.
	
	We conclude that $\alpha^*>0$ is the simple, minimal-modulus eigenvalue of $D_\alpha-T$ and that $v^*>0$ is the associated eigenvector and the equilibrium no arrival distribution.
\end{proof}

\subsection{Proof of Theorem \ref{thm:main_mpp_dom}} 
\label{sec:mmpp_dom_proof}

We begin by proving (i). 
Note that $T\bo=\mathbf 0$ (the zero column vector) since $T$ is a generator, and recall that $\alpha^*,v^*$ is an eigenpair for $D_\alpha-T$ with $v^*\bo=1$ (Lemma \ref{lem:lamstar-v-pos}). It is now straightforward to see that 
\begin{equation}\label{eq:a=av1}
	\alpha^*=\alpha^*v^*\bo =v^*(D_\alpha-T)\bo =v^*D_\alpha\bo.
\end{equation} 

Let $\alpha_j=\alpha_0+\gamma_1+\cdots+\gamma_j$ and note that all $\gamma_j\geq0$ for all $j$ since $\alpha$ is increasing. Then, 
\begin{equation*}
	xD_\alpha\bo=\alpha_0+\sum_{j=1}^{k-1}\gamma_j(x_j+\cdots+x_{k-1}).
\end{equation*}
For any $x\prec y$, it follows that  $\gamma_j(x_j+\cdots+x_{k-1})\leq\gamma_j(y_j+\cdots+y_{k-1})$ for all $j$ and, hence, that $xD_\alpha\bo\leq yD_\alpha\bo$. Using the fact that $v^*\prec\mathcal L(B_0)$ implies $v^*\prec \mathcal L(B_t\mid \mathcal{H}_t)$ for arbitrary history $\mathcal H_t$ (Lemma \ref{lem:v<L(B_t|H)}), along with the CIF definition \eqref{eq:cif} and \eqref{eq:a=av1}, then shows that 
\begin{equation*}
	\alpha^*=v^*D_\alpha\bo\leq \mathcal L(B_t\mid \mathcal{H}_t)D_\alpha\bo=\lambda(t\mid\mathcal{H}_t).
\end{equation*}
Note that this is the only way that downward conditional monotonicity is used in this proof, to establish that the conditional distribution of $B_t$ is always bounded below by $v^*$. That $\alpha$ is increasing is then used to ensure that the CIF is bounded below by $\alpha^*$.

By a standard result (see Theorem 2 in \cite{rolski1991}), $\alpha^*\leq \lambda(t\mid\mathcal{H}_t)$ implies that $Y^\lambda\prec X$ for all $\lambda\leq\alpha^*$ (with $Y^\lambda\sim\Poi(\lambda)$ the Poisson counting process). Recall that this is domination of the underlying point processes as described in the paragraph below \eqref{eq:x<y-def} in the introduction. Thus, the optimal coupling parameter (defined by \eqref{eq:lam-opt-def}) satisfies $\lopt\geq\alpha^*$. 

To see necessity of $\lambda\leq\alpha^*$ for $Y^\lambda\prec X$, let $v^*\prec x$, and consider that the probability of the decreasing event $E_t=\{\text{no arrivals in } [0,t]\}$ for $X$ is $xe^{t(T-D_\alpha)}\bo$ (see the text after \eqref{eq:x'Q-to-xTD}). 
It follows that $Y^\lambda\prec X$ requires, for all $t>0$, 
\begin{equation}\label{eq:YX-no-arriv[0,t]}
	\mathbb P^{Y^\lambda}(E_t)=e^{-\lambda t}\geq xe^{t(T-D_\alpha)}\bo=\mathbb P^X(E_t).
\end{equation}
Solving \eqref{eq:YX-no-arriv[0,t]} for $\lambda$ shows that $Y^\lambda\prec X$ requires, for all $t>0$, 
\begin{equation}\label{eq:lam<-1tlogxetbo}
	\lambda\leq -\frac{1}{t}\log\left(xe^{t(T-D_\alpha)}\bo\right).
\end{equation}
Taking the limit of \eqref{eq:lam<-1tlogxetbo} as $t\to\infty$ requires a use of L'H\^opital's:
\begin{equation}\label{eq:lim-alph-star}
	\begin{aligned} 
		\lambda\leq\lim_{t\to\infty}-\frac{xe^{t(T-D_\alpha)}(T-D_\alpha)\bo}{xe^{t(T-D_\alpha)}\bo}&=\lim_{t\to\infty}\frac{xe^{t(T-D_\alpha)}}{xe^{t(T-D_\alpha)}\bo}D_\alpha\bo\\
		&=\lim_{t\to\infty}x_tD_\alpha\bo\\
		&=v^*D_\alpha\bo=\alpha^*.
	\end{aligned}
\end{equation}
This uses the convergence of $x_t$ to $v^*$ as shown in the proof of Lemma \ref{lem:lamstar-v-pos}. 
This establishes that $\lambda\leq\alpha^*$ is required. We conclude that $\lopt=\alpha^*$, proving (i). This argument includes some aspects of arguments in Section 4 of \cite{broman2007} but with a significantly different construction. 

Next, we prove (ii). Sufficiency of $\lambda\geq\max_j\alpha_j=\alpha_{k-1}$ for $X \prec Y^\lambda$ is trivial. Necessity follows from a straightforward adaptation of the arguments in \cite{broman2007} (see the proof of Theorem 1.4 there)---that $\lambda<\alpha_{k-1}$ implies that there is an $n$ large enough so that 
\begin{equation}\label{eq:PXn-arriv>PYn-arriv}
	\mathbb P(X \text{ has at least $n$ arrivals in } [0,t])> \mathbb P(Y^\lambda \text{ has at least $n$ arrivals in } [0,t])
\end{equation} 
which contradicts that $X\prec Y^\lambda$ since having at least $n$ arrivals is an increasing event for the space of count paths. See Remark \ref{rmk:alpha-max-gen-T} below for a related and more detailed computation.

We conclude that, for our MMPP with irreducible, monotone $T$, $\alpha$ increasing and initial background distribution $B_0\sim x$ satisfying $v^*\preceq x$, the point process $N(X)$ associated with our counting process $X$ dominates a Poisson point process with constant intensity $\lambda$ if and only if $\lambda\leq\alpha^*$, the minimal eigenvalue for $D_\alpha-T$, and is dominated by a Poisson point process with intensity $\lambda$ if and only if $\lambda\geq\max_j\alpha_j=\alpha_{k-1}$. \hfill $\square$

\medskip

Note that a sufficiently long interarrival time can make the CIF approach $\alpha^*$ arbitrarily closely. This can be seen by recognizing that the right side of \eqref{eq:lim-alph-star} shows the CIF converging to $\alpha^*$ as the wait time to the next arrival approaches infinity (with arbitrary distribution $x$). 

Also, a sufficiently large number of rapid arrivals with short enough interarrival times can make the CIF approach $\alpha_{k-1}$. To see this latter case, consider $\mathcal L(B_t\mid\mathcal H_t)$ the conditional distribution of $B_t$ given by \eqref{eq:Bt|Htn}. Assume arbitrarily small interarrival times $t_j-t_{j-1}$, and let the number of arrivals in $[0,t]$ grow unboundedly, $n\to\infty$. 
This makes the CIF \eqref{eq:cif}
\begin{equation*}
	\lambda(t\mid\mathcal H_t)=\mathcal L(B_t\mid\mathcal H_t)D_\alpha\bo \approx \frac{x D_\alpha^n}{x D_\alpha^n\bo}D_\alpha\bo\overset{n\to\infty}{\longrightarrow} e_{k-1}D_\alpha\bo=\alpha_{k-1}
\end{equation*}
due to standard Perron--Frobenius convergence.

However, these two facts alone are not sufficient to establish our optimal bounds of $\alpha^*$ and $\alpha_{k-1}$. 
Domination of CIFs is only a sufficient condition and not necessary for domination of point processes---it is not hard to construct counterexamples which prove this. For example, on $[0,1]$, let $N(X)$ have no points with probability $e^{-\delta}$ and have a single, uniformly-distributed point otherwise. That $\mathcal L(N(X))\prec \PPP(\delta)$ easily follows via coupling, however, by Proposition 7.2.I and (7.2.3) in \cite{daley_pp}, the CIF for $N(X)$ can be computed as 
\begin{equation*}
	\lambda(t\mid\mathcal \tau_1>t)=\frac{-\frac{d}{dt}\mathbb P(\tau_1>t)}{\mathbb P(\tau_1>t)}=\frac{1-e^{-\delta}}{1-t+te^{-\delta}}
\end{equation*}
and satisfies $\lambda(t\mid\tau_1>t)>\delta$ for large enough $t\in(0,1)$. 

\begin{remark}\label{rmk:shift-alpha-a}
	One can assume that $\alpha_0=0$ since uniformly shifting the arrival rates $\tilde{\alpha}=\alpha+a$ simply shifts the minimal eigenvalue: $\tilde{\alpha}^*=\alpha^*+a$.
\end{remark}

\begin{remark}\label{rmk:alpha-max-gen-T}
	As long as $T$ is irreducible, Theorem \ref{thm:main_mpp_dom}(ii) still applies when $\alpha$ is neither increasing nor decreasing (and with arbitrary initial distribution $x$). Assume state $m\in S$ gives the maximum arrival rate: $\max_j\alpha_j=\alpha_m$ (there may be several such states). Since $T$ is irreducible, for any $\epsilon>0$, 
	\begin{equation*}
		\mathbb P(B_t=m ~\forall t\in[\epsilon,1+\epsilon])=(xe^{\epsilon T})_me^{-\sum_{j\neq m}T_{mj}}>0.
	\end{equation*}
	Letting $E=\{\text{at least $n$ arrivals in }[\epsilon,1+\epsilon]\}$, we have 
	$\mathbb P^{Y^\lambda}(E)=\sum_{j=n}^\infty \frac{e^{-\lambda} \lambda^j}{j!}$, and
	\begin{equation*}\label{eq:PX>n-arriv}
		\mathbb P^X(E)\geq (xe^{\epsilon T})_me^{-\sum_{j\neq m}T_{mj}} \sum_{j=n}^\infty \frac{e^{-\alpha_m} \alpha_m^j}{j!}.
	\end{equation*}
	If $\lambda<\alpha_m$, then there is an $n$ large enough so that \eqref{eq:PXn-arriv>PYn-arriv} is satisfied, i.e., $\mathbb P^X(E)>\mathbb P^{Y^\lambda}(E)$, 
	contradicting that $X\prec Y^\lambda$. Hence, $\lambda\geq\max_j\alpha_j$ is necessary (and trivially sufficient) for $X\prec Y^\lambda$ irrespective of any monotonicity properties of $\alpha$. Reducible $T$ simply requires considering the maximum arrival rate accessible given a particular initial distribution $x$.
\end{remark}

\begin{remark}\label{rmk:non-dcm}
	Theorem \ref{thm:main_mpp_dom}(i) only establishes $\alpha$ being increasing, $T$ being monotone, and DCM as sufficient conditions. The case when $\alpha$ is decreasing has already been mentioned (see Remarks \ref{rmk:decr-alpha} and \ref{rmk:alpha-decr-x<v}). It is not hard to find example MMPP models where neither $\alpha$ nor $T$ are monotone, and which are non-DCM, but still satisfy $\lopt=\alpha^*$. This can be accomplished via the method of spectral radius-preserving row sum expansions developed in \cite{stover2022} or via permutations---it is not hard to understand that such methods have no effect on the CIF since the ordering on the state space has no bearing on the CIF. We leave exploring this further for future work. 
\end{remark}

\section{Survival and extinction for the CPMRE}
\label{sec:CPMRE}

Our CPMRE result is, in part, a straightforward consequence of our MMPP domination result. The main necessity is a graphical construction which allows appropriate comparisons to contact processes. All of our CPMRE models will be assumed to have $\beta=(\beta_0,\beta_1,\ldots,\beta_{k-1})$ increasing and $\mu=(\mu_0,\mu_1,\ldots,\mu_{k-1})$ decreasing so that higher environmental states are more favorable to the infection with higher infection rates and lower recovery rates. As noted in the Introduction, $\beta,\mu$ will be treated as analogous to $\alpha$, the arrival rate vector of an MMPP.

Our random environment uses a single generator matrix $T$ which is always assumed irreducible and monotone. In this section, we use downward conditional monotonicity extensively---see Definition \ref{def:DCM}, and we assume Theorem \ref{thm:main_mmpp_cm} here as well. Much of Section \ref{sec:mmpp-dom} is also relevant here as we use Theorem \ref{thm:main_mpp_dom} to prove Theorem \ref{thm:main_CPMRE}.

Recall that, since $\mu$ is decreasing, we use the permutation-reversed \MMPPt{$\widehat\mu,\widehat{T}$} (defined just before the statement of Remark \ref{rmk:decr-alpha}) to assess for DCM. When \MMPPt{$\beta,T$} is DCM, the CPMRE is referred to as \textit{infection-DCM}, and when \MMPPt{$\widehat\mu,\widehat{T}$} is DCM, the CPMRE is called \textit{recovery-DCM}. 

We denote the equilibrium no arrival distributions (see Definition \ref{def:v}) for these MMPPs by $v^\beta$ and $v^\mu$, respectively. Recall that $\beta^*>0$ and $\mu^*>0$ are the respective minimal eigenvalues of $D_\beta-T$ and $D_\mu-T$ (via Definition \ref{def:alpha*}) and that $v^\beta$ and $v^\mu$ are their associated (strictly positive) eigenvectors (see Lemmas \ref{lem:Da-T_idd} and \ref{lem:lamstar-v-pos}).

Theorem \ref{thm:main_CPMRE} states that we have no restrictions on the initial distribution at any site. Its proof (in Section \ref{sec:cpmre-proof}) first assumes the initial distribution to be $\pi$ at every site. Using our DCM property requires the initial distribution to satisfy stochastic domination relations with the equilibrium no arrival distributions, and we will shortly see that $\pi$ satisfies this. Remark \ref{rmk:alpha-decr-x<v} is important here since we assume $\mu$ is decreasing. 

The following lemma states that the equilibrium no arrival distribution is strictly dominated by the stationary distribution for the background process (under suitable assumptions on $\alpha,T$). Recall that strict domination means the main inequalities comparing the summations in \eqref{eq:x<y-def} are all strict except for when  $\ell=k-1$. This next lemma is important for relaxing all restrictions on the initial environmental distribution. 
\begin{lemma} \label{lem:v-sdom-pi}
	Consider the MMPP with irreducible, monotone $T$, increasing $\alpha$, and which is DCM. Then,  the equilibrium no arrival distribution is strictly dominated by the stationary distribution: $v^*\precn \pi.$
\end{lemma}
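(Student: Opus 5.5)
The plan is to prove the weak domination $v^*\prec\pi$ first, and then upgrade it to strict domination. The upgrade uses the fact that, started from $\pi$, the conditioned distributions $\pi_t$ drift monotonically downward to $v^*$, with a strictly positive initial rate of descent. Throughout, write $F_\ell(z)=\sum_{s\le\ell}z_s$. Note that $z\prec w$ means $F_\ell(z)\ge F_\ell(w)$ for all $\ell$. These are linear conditions, so they are preserved under mixtures and under limits.

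\emph{Step 1: $v^*\prec\pi$.} Start the MMPP from $B_0\sim e_{k-1}$; trivially $v^*\prec e_{k-1}$. By Lemma \ref{lem:v<L(B_t|H)}, $v^*\prec\mathcal L(B_t\mid\mathcal H_t)$ for every $t>0$ and every arrival history $\mathcal H_t$. The unconditional law $\mathcal L(B_t)=e_{k-1}e^{tT}$ is the mixture of these conditional laws over the law of $\mathcal H_t$. Since each inequality $F_\ell(\cdot)\le F_\ell(v^*)$ is linear, it follows that $v^*\prec e_{k-1}e^{tT}$. Letting $t\to\infty$ and using irreducibility, $e_{k-1}e^{tT}\to\pi$, and the closedness of the inequalities gives $v^*\prec\pi$.

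\emph{Step 2: $t\mapsto\pi_t$ is stochastically decreasing.} Differentiate \eqref{eq:LBt|0} at $t=0$ with $x=\pi$. Since $\pi T=0$, this gives $\frac{d}{dt}\pi_t\big|_{t=0}=-\pi D_\alpha+(\pi D_\alpha\bo)\pi$, and hence
\begin{equation*}
\frac{d}{dt}F_\ell(\pi_t)\Big|_{t=0}=(\pi D_\alpha\bo)\sum_{s\le\ell}\pi_s-\sum_{s\le\ell}\pi_s\alpha_s .
\end{equation*}
This is $F_\ell(\pi)\bigl(\mathbb E_\pi[\alpha_Z]-\mathbb E_\pi[\alpha_Z\mid Z\le\ell]\bigr)$, which is $\ge0$ as in Lemma \ref{lem:rescale-Da}. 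It is in fact strictly positive for $\ell<k-1$. Indeed, $\pi>0$, and equality would force $\alpha$ to be constant: since $\alpha$ is increasing, a lower set with the same conditional mean as the whole space forces all values of $\alpha$ to coincide. This contradicts the standing assumption that $\alpha$ is nonconstant. Consequently, for all sufficiently small $s>0$ we have $F_\ell(\pi_s)>F_\ell(\pi)$ for every $\ell<k-1$, so $\pi_s\prec\pi$ (indeed $\pi_s\precn\pi$).

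We now feed this into DCM. By Step 1 and DCM, $v^*\prec\pi_s$, so $v^*\prec\pi_s\prec\pi$. Apply Definition \ref{def:DCM} (in the matrix form of Lemma \ref{lem:dcm-matrix-form}) with initial laws $\pi_s$ and $\pi$. Using the semigroup identity $(\pi_s)_t=\pi_{s+t}$, which is immediate from \eqref{eq:LBt|0}, this yields $\pi_{s+t}\prec\pi_t$ for all $t>0$. Hence $F_\ell(\pi_{t+s})\ge F_\ell(\pi_t)$ for all $t\ge0$, and iterating over multiples of $s$ shows that the sequence $F_\ell(\pi_{ns})$ is nondecreasing.

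\emph{Step 3: strictness.} By Lemma \ref{lem:lamstar-v-pos}, $\pi_t\to v^*$. Therefore, for $\ell<k-1$,
\begin{equation*}
F_\ell(v^*)=\lim_{n\to\infty}F_\ell(\pi_{ns})\ge F_\ell(\pi_s)>F_\ell(\pi),
\end{equation*}
which is exactly $v^*\precn\pi$.

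The main obstacle is Step 2, in two respects. First, one must verify that the DCM hypothesis $v^*\prec\pi$ is genuinely available, which is why Step 1 is done first. Second, one must verify that the first-order strict decrease holds for every $\ell<k-1$ simultaneously. This second point requires the nonconstancy of $\alpha$ together with $\pi>0$.
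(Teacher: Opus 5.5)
Your proof is correct. Its second half matches the paper's argument: the first-order expansion of $F_\ell(\pi_t)$ at $t=0$ using $\pi T=0$, strict positivity from $\pi>0$ and a nonconstant increasing $\alpha$, and then one application of DCM.

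Where you genuinely differ is in how you obtain the DCM hypothesis $v^*\prec\pi$. You start from $e_{k-1}$, apply Lemma~\ref{lem:v<L(B_t|H)} to every arrival history, average over histories to get $v^*\prec e_{k-1}e^{tT}$, and let $t\to\infty$. The paper instead proves $\pi_t\prec\pi$ for every $t>0$ by discretizing with $E_m=(I+T/m)(I-D_\alpha/m)$. There, $I+T/m$ is a monotone stochastic matrix fixing $\pi$, and reweighting by the decreasing diagonal $I-D_\alpha/m$ and normalizing lowers a distribution stochastically. This gives $\pi E_m^n/\pi E_m^n\bo\prec\pi$, and the paper then sends $m\to\infty$ with $n=\lfloor mt\rfloor$, followed by $t\to\infty$.

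Your route is shorter and reuses an existing lemma, but it spends the DCM hypothesis (and the history formula of Lemma~\ref{lem:new-L(Bt|H)}) to get $v^*\prec\pi$. The paper's route shows $v^*\prec\pi$, and indeed $\pi_t\prec\pi$ for all $t$, from monotonicity of $T$ and increasing $\alpha$ alone, and reserves DCM for the final strict step.

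Finally, your iteration over multiples of $s$ and the limit in Step 3 are unnecessary. Once you have $v^*\prec\pi_s$ and $\pi_s\precn\pi$, the chain $F_\ell(v^*)\ge F_\ell(\pi_s)>F_\ell(\pi)$ for $\ell<k-1$ already gives $v^*\precn\pi$. This is exactly how the paper concludes.
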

The proof of Lemma \ref{lem:v-sdom-pi} is given in Section \ref{sec:v-sdom-pi}, since its proof uses methods similar to those repeated several times in Section \ref{sec:dcm}. Note that the conditions in Lemma \ref{lem:v-sdom-pi} are only proven as sufficient for the stated strict domination.

Applying Lemma \ref{lem:v-sdom-pi} and its reversal for decreasing $\alpha$ (again, see Remark \ref{rmk:alpha-decr-x<v}), since $\beta$ is increasing and $\mu$ decreasing, we see that   
\begin{equation}\label{eq:vlam<pi<vdel}
	v^\beta\precn \pi\precn v^\mu.
\end{equation}
If our model is infection-DCM, then our MMPP domination result (Theorem \ref{thm:main_mpp_dom}) is only useful if the initial environmental distribution $b$ satisfies $v^\beta\prec b$. Similarly, for recovery-DCM to be useful requires $b\prec v^\mu$. The stationary distribution $\pi$ satisfies both.

Here is a preview on how we use Lemma \ref{lem:v-sdom-pi} to relax our requirements on the initial environmental distribution. For any distribution $b$, it follows that $be^{sT}\to\pi$ as $s\to\infty$. The strict dominations in \eqref{eq:vlam<pi<vdel} show that we can pick an $s$ large enough so that $v^\beta\prec be^{sT}\prec v^\mu$ (note that these are actually strict dominations, but that is not important here). To understand more clearly why this is important, $u\prec \pi\prec w$ (for some distributions $u,w$) does not generally imply that $u\prec be^{sT}\prec w$ for large $s$, and it is not hard to find such examples. For example, if $u_0=\pi_0$ and $b\prec u$, then it is possible that $(be^{sT})_0>u_0$ for all $s>0$, and we would never have $u\prec be^{sT}$ for any large $s$. The strict dominations in \eqref{eq:vlam<pi<vdel} give strict inequalities when applying \eqref{eq:x<y-def} (ignoring $\ell=k-1$). These strict dominations guarantee that 
\begin{equation*}
	\sum_{i\leq\ell}v^\beta_i >\underbrace{\sum_{i\leq\ell}\big(be^{sT}\big)_{i}}_{\approx\sum_{i\leq\ell}\pi_i} > \sum_{i\leq\ell}v^\mu_i
\end{equation*}
when $s$ is sufficiently large (and $\ell\neq k-1$), giving $v^\beta\precn be^{sT}\precn v^\mu$. Hence, regardless of the initial environmental distributions, we can wait long enough so that the distributions at all sites simultaneously match our required conditions for DCM to be useful.  

%

\subsection{Graphical construction}
\label{sec:CPMRE_graphrep}

Let $\Omega$ denote the space of all graphical structures which we are about to describe. For each site $x\in\mathbb Z^d$, we extend a timeline ``upward.'' Along each timeline, there are several Poisson point processes associated with where infections and recoveries may occur. Our next step is to describe in detail how this graphical structure is constructed along the timeline for each site $x$. An example graphical structure is shown in Figure \ref{fig:CPMRE_graph}---the symbols and notation in the figure and its caption will be explained shortly. Recall that, in this section and generally whenever discussing the CPMRE, we use $x,y$ to represent sites in the integer lattice and not distributions. 

\begin{figure}[htb]
	\begin{tikzpicture}[x=1.5cm,y=0.75cm]
		
		\path[use as bounding box] (-0.2,-0.5) rectangle (7.1,8.0);
		
		
		\begin{scope}[transparency group, opacity=0.4]	
			\draw[pathred] (0,0) -- (0,6.55);
			\draw[pathred] (0,1.1) -- (1,1.1) -- (1,2.75);
			\draw[pathred] (0,5.1) -- (1,5.1) -- (1,7.65);
		\end{scope}
		
		\begin{scope}[transparency group, opacity=0.4]	
			\draw[pathgreen] (3,0) -- (3,5.05);
			\draw[pathgreen] (3,3.4) -- (2,3.4) -- (2,8);
			\draw[pathgreen] (2,5.9) -- (3,5.9) -- (3,8);
			\draw[pathgreen] (3,6.4) -- (4,6.4) -- (4,8);
			\draw[pathgreen] (3,1.55) -- (4,1.55) -- (4,4.45);
		\end{scope}
		
		\begin{scope}[transparency group, opacity=0.4]	
			\draw[pathblue] (6,0) -- (6,8);
			\draw[pathblue] (6,2.2) -- (5,2.2) -- (5,5.95);
			\draw[pathblue] (6,4.8) -- (5,4.8);
			\draw[pathblue] (6,6.9) -- (5,6.9) -- (5,8);
			\draw[pathblue] (6,0.9) -- (7,0.9) -- (7,2.45);
			\draw[pathblue] (6,3.6) -- (7,3.6) -- (7,6.65);
			\draw[pathblue] (6,5.6) -- (7,5.6);
			\draw[pathblue] (6,7.5) -- (7,7.5) -- (7,8);
		\end{scope}
		
		
		\draw[worldline] (-0.25,0) -- (7.25,0);
		
		\foreach \i in {0,...,7}{
			\draw[timeline] (\i,0) -- (\i,8);
		}
		
		\draw[timeaxis] (-0.35,2.5) -- (-0.35,5.5);
		\node[left] at (-0.45,4) {\large $t$};
		
		\node[below] at (3.5,-0.2125) {\large$x$};
		
		
		\rarrowcircstar{0}{1}{1.1}
		\rarrowplain{0}{1}{3.98}
		\rarrowcircstar{0}{1}{5.1}
		\markot{0}{6.6}
		
		\rarrowcirc{1}{2}{1.9}
		\markx{1}{2.8}
		\rarrowplain{1}{2}{7}
		\rarrowcirc{1}{0}{7.2}
		\markotstar{1}{7.7}
		
		\markx{2}{4.55}
		\rarrowcirc{2}{3}{5.9}
		\rarrowplain{2}{1}{6.18}
		\rarrowplain{2}{3}{7.5}
		
		\markx{3}{0.75}
		\rarrowcircstar{3}{4}{1.55}
		\larrowcircstar{3}{2}{3.4}
		\markotstar{3}{5.1}
		\rarrowcirc{3}{4}{6.4}
		
		\rarrowplain{4}{5}{3.35}
		\markot{4}{4.5}
		\markx{4}{7.6}	
		
		\markx{5}{4}
		\markotstar{5}{6.0}
		
		\rarrowplain{6}{7}{0.9}
		\rarrowcirc{6}{5}{2.2}
		\markx{6}{3.1}
		\rarrowcircstar{6}{7}{3.6}
		\markot{6}{4.4}
		\rarrowplain{6}{5}{4.8}
		\rarrowcircstar{6}{7}{5.6}
		\rarrowcirc{6}{5}{6.9}
		\rarrowplain{6}{7}{7.5}
		
		\markotstar{7}{2.5}
		\markotstar{7}{6.7}
		
	\end{tikzpicture}
	\caption{
		An example of our graphical construction is shown. Note that this graph includes nonempty $A_*^{xy}$ and $D_*^x$ which assumes the initial distribution allows this (e.g., $B^x_0\sim\pi$ for all $x$ is sufficient). Also, we disregard the numerical labels ($j\in S$) for all symbols as they are only important for determining whether they are marked with a circle or not. Example paths for the \CPt{$\beta^*,\mu_0$} (left/red), CPMRE (center/green), and \CPt{$\beta_{k-1},\mu^*$} (right/blue) are shown. The \CPt{$\beta^*,\mu_0$} uses only arrows marked with an asterisk but uses all recovery symbols. The CPMRE uses only arrows marked with circles and circled recovery symbols. The \CPt{$\beta_{k-1},\mu^*$} uses all arrows but only recovery symbols marked with an asterisk.
	}
	\label{fig:CPMRE_graph}
\end{figure}

For each $j=0,\ldots, k-2$, denote by $D^x_j$ a Poisson point process with intensity $\mu_{j}-\mu_{j+1}$ and $D_{k-1}^x$ with intensity $\mu_{k-1}$. Distribute  ``$\scalebox{0.9}{$\boldsymbol\times$}_j$'' symbols, labeled with a $j$, along the timeline for site $x$ at each point in $D_j^x$. These are where potential recoveries can occur at site $x$. 

For each $j=1,\ldots,k-1$, and for each $y\in\mathcal N(x)$ (the set of nearest neighbors for site $x$), denote by $A^{xy}_j$ a Poisson point process with intensity $\beta_j-\beta_{j-1}$ and $A^{xy}_0$ with intensity $\beta_0$. Distribute arrow symbols ``$\gto_j$'' pointing from site $y$ to site $x$, and labeled with a $j$, at each point in $A_j^{xy}$. These denote where the infection can transmit from site $y$ to site $x$. 

If the background process is in state $\ell$ for some interval of time at site $x$, then arrows are usable only if they are marked with $j\leq\ell$, and recovery symbols are usable only if they are marked with $j\geq\ell$. We refer to such symbols as compatible with the background state.

Let $D^x=\bigcup_{j=0}^{k-1} D_j^x$ be the collection of all recovery symbols for site $x$ and, similarly, let $A^{xy}=\bigcup_{j=0}^{k-1} A^{xy}_j$ be all arrows from $y$ to $x$. 
We view all of these point processes simply as random sets of points. 

Now, we fix a realization of the random environment for each site $x$. In other words, for each realization of the graphical structure $\omega\in\Omega$, the timeline for each site $x$ has a fixed cadlag sample path $B^x(\omega): [0,\infty)\to S$ associated with it, and these paths are chosen independently. Assume that the each site $x$ has its initial distribution specified. Once the environmental state at each site and for all time is known, then any recovery symbol that is compatible with the background state is circled: $\scalebox{0.9}{$\boldsymbol\otimes$}_j$. Similarly, any infection arrow that is compatible with the background state is marked with a circle at its tip: $\gtoCirc$. Denote the sets of points adorned with circles as $D^x_\circ$ and $A^{xy}_\circ$. After marking with circles, the numerical labels on symbols are no longer important. 

Note that, for a fixed timeline, the collection of arrows marked with circles, $A^{xy}_\circ$, is simply a realization of the point process associated with the \MMPPt{$\beta,T$} process. Likewise, that for circled recovery symbols is a realization of the arrival times of the MMPP with $\mu,T$. 

Envision placing fluid at each site which is to be initially infected, at $t=0$. The fluid percolates up the graphical structure to simulate the CPMRE using only those symbols marked with circles. The fluid is stopped at any circled recovery symbol and branches along arrows with circled tips. The set of infected sites at time $t$ is those where fluid is present.

For fixed parameters, $\beta,\mu,T$, and $B_0^x\sim b$ for all $x$, we have a probability measure, $\mathbb P^b$, on $\Omega$ the set of all graphs. 
This graphical construction is suitable for our CPMRE with any set of initially infected sites. The CPMRE only uses symbols marked with circles, of course. 

Note that this graphical construction is also suitable for a standard contact process with fixed infection rate $\beta_i$ and fixed recovery rate $\mu_j$ for arbitrary $i,j\in S$. Such a contact process can use any arrow symbol with label less than or equal to $i$. It uses any recovery symbol with numerical label greater than or equal to $j$. 

We now describe some point processes which are appended onto the graphical structure only when the initial environmental distribution at each site allows it. Lemma \ref{lem:v-sdom-pi} shows it is sufficient that $B_0^x\sim\pi$ at each site, for example.

When the initial distribution of the environment at each site stochastically dominates $v^\beta$, then a set of graphs is constructed simply including a Poisson point process $A_*^{xy}\sim\PPP(\beta^*)$ along each timeline and for each neighboring $y\in\mathcal N(x)$.  We couple these graphs into ours so that $A_*^{xy}\subset A_\circ^{xy}$ (with probability one)---this is achievable due to Theorem \ref{thm:main_mpp_dom} and explicitly via standard CIF thinning methods (see, e.g., Section 7.5 in \cite{daley_pp}). We simply mark any such arrow in $A_\circ^{xy}$ that coincides with a point in $A_*^{xy}$ with an asterisk ``$*$'' label, in addition to their being labeled with a value of $j$ and a circle at the tip: $\gtoCircStar$.  

Similarly, when the initial distribution of the environment at each site is dominated by $v^\mu$, we construct point processes $D_*^x\sim\PPP(\mu^*)$ along each timeline and couple these into our graphical representation so that $D_*^x\subset D_\circ^x$ (almost surely). The coincident recovery symbols are marked with an asterisk ``$*$'' label: $\scalebox{0.9}{$\boldsymbol\otimes$}_j^{\scalebox{0.9}{$*$}}$. See Figure \ref{fig:CPMRE_graph} for an example of this graphical structure (but note that numerical labels are suppressed in this figure). 

For each site $x$, we now have $2dk+k+1$ independent processes: one background process $B^x$, several $A_j^{xy}$ for arrows ($2dk$ of them), and several $D^x_j$ for recovery symbols ($k$ of these). Hence, for example, when we couple the background and recovery processes $(B^x,D^{x}_0,\ldots,D^{x}_{k-1})$ with $D^x_*$ in order to achieve the domination $D^x_*\subset D^x_\circ$, each $A_j^{xy}$ is still independent of this new $D^x_*$. Similarly, $A_*^{xy}$ is independent of each $D_j^x$. But $A_*^{xy}$ and $D^x_*$ may be dependent since they are both coupled with $B^x$. 

Note that our construction of $\Omega$ effectively assumes that the conditions required for our MMPP dominations are satisfied, allowing inclusion of all asterisk-marked processes. We drop those processes as required when such dominations are not achievable, say, due to an incompatible choice of the initial distribution. We make no adjustments to our construction or notation though. This is easily compensated for by applying unit mass to the trivial counting measure (the empty point process) as appropriate. 

This graphical structure is also suitable for a contact process with infection rate $\beta_j$ and recovery rate $\mu^*$ or with infection rate $\beta^*$ and recovery rate $\mu_j$. Soon, we use such contact processes in our arguments. This construction may not be suitable for a contact process with infection rate $\beta^*$ and recovery rate $\mu^*$ though, since the asterisk-marked arrow and recovery symbol processes may be dependent, through their simultaneous dependence on the same background environmental process. This is not a problem as we never use such a contact process. Denote the contact process with infection rate $\lambda$ and recovery rate $\delta$ by \CPt{$\lambda,\delta$}. 

We say there is a CPMRE-path from site $x_1$ at time $t_1$ to site $x_2$ at time $t_2$, denoted by $(x_1,t_1)\rightsquigarrow(x_2,t_2)$, 
if there is a sequence of adjacent sites $x_1=y_1,y_2,\ldots,y_n=x_2$ and a sequence of times $t_1=t_0'\leq t_1'\leq\ldots\leq t_n'= t_2$ with $y_{j+1}\in\mathcal N(y_{j})$ and $t_j'\in A^{y_{j+1}y_{j}}_\circ$ for all $j=1,\ldots,n-1$ (i.e., there are circle-marked arrows at each time $t_j'$ pointing from $y_{j}$ to $y_{j+1}$) and $D_\circ^{y_j}\cap[t_{j-1}',t_{j}']=\emptyset$ for all $j=1,\ldots,n$. One can view this as placing fluid at $(x_1,t_1)$ in the graph and percolating it upward. It can only cross arrows with circles at their tips and is only stopped by circled recovery symbols; $(x_1,t_1)\rightsquigarrow(x_2,t_2)$ if and only if fluid percolates from $(x_1,t_1)$ to $(x_2,t_2)$. If no such path exists, we write $(x_1,t_1)\not\rightsquigarrow(x_2,t_2)$. 

Similarly, we say there is a \CPt{$\beta_{k-1},\mu^*$}-path, denoted $(x_1,t_1)\rightsquigarrow_{\mu^*}(x_2,t_2)$, for a path that can use any arrow symbol types and never crosses any asterisk-marked recovery symbols (but it can cross all others). A \CPt{$\beta^*,\mu_0$}-path is denoted $(x_1,t_1)\rightsquigarrow_{\beta^*}(x_2,t_2)$, only uses asterisk-marked arrows and never crosses any recovery symbols, irrespective of type. 

The above establishes that (almost surely) every CPMRE-path is also a \CPt{$\beta_{k-1},\mu^*$}-path (i.e., that $(x_1,t_1)\rightsquigarrow(x_2,t_2)$ implies $(x_1,t_1)\rightsquigarrow_{\mu^*}(x_2,t_2)$), and every \CPt{$\beta^*,\mu_0$}-path is a CPMRE-path (i.e., that $(x_1,t_1)\rightsquigarrow_{\beta^*}(x_2,t_2)$ implies $(x_1,t_1)\rightsquigarrow(x_2,t_2)$). Both claims follow since every asterisk-marked symbol is also circle-marked. 

Let $\eta=(\eta_t^x)_{x\in\mathbb Z^d,t\geq0}$ be the infection process for the CPMRE, and let $\xi$ be that for the contact process with parameters $\beta^*,\mu_0$. It follows that $\xi_0^x\leq\eta_0^x$ initially for all $x$ implies that $\xi_t^x\leq\eta_t^x$ for all $x$ and all $t>0$ (almost surely) as this is explicitly realized in our graphical construction (since every \CPt{$\beta^*,\mu_0$}-path is a CPMRE-path). This is stochastic domination between these two infection processes. Let $\mathbb P_{\mathrm{cpmre}}^{A,b}$ be the distribution of our CPMRE with $A$ the set of initially infected sites and $B_0^x\sim b$ at each site, and let $\mathbb P_{\lambda,\delta}^{A}$ be that for the contact process with infection rate $\lambda$, recovery rate $\delta$ and $A$ the set of initially infected sites. We denote this stochastic domination of infection processes by 
\begin{equation}\label{eq:P<P-def}
	\mathbb P_{\beta^*,\mu_0}^{A} \prec \mathbb P_{\mathrm{cpmre}}^{A,b}
\end{equation}
(with $v^\beta\prec b$ required to allow the MMPP domination $A_*^{xy}\subset A_\circ^{xy}$). Dominations between various infection processes are denoted similarly, and all such dominations that we use are realizable on a suitable graphical structure. 


We define survival and extinction events in general for such infection processes, for example, the CPMRE, modified versions of it, or standard contact processes. Let  $\eta=(\eta_t^x)_{x\in\mathbb Z^d,t\geq0}$ be such a process. Let $\mathcal E$ denote the event that the process goes extinct, and let $\mathcal S$ be the event that it survives strongly when the origin is the only initially infected site. Extinction means 
\begin{equation*}
	\mathcal E=\{\exists t<\infty : \eta_s^x=0 \text{ for all } x\in\mathbb Z^d \text{ and all } s>t\},
\end{equation*}
and (strong) survival means (when the origin is the only initially infected site) 
\begin{equation*}
	\mathcal S=\{\eta_t^0=1 \text{ i.o.}\}, 
\end{equation*}
where the notation ``i.o.'' assumes a sequence of times $(t_n)_{n\in\mathbb N}$ with $t_n\to\infty$ as $n\to\infty$. 

\pagebreak

Survival and extinction probabilities can now be understood in terms of this graphical representation. Extinction and survival probabilities for the CPMRE are given as 
\begin{equation*} \begin{aligned}
	\mathbb P_{\mathrm{cpmre}}^{A,b}\left(\mathcal E\right)&=\mathbb P^b\big[\exists t<\infty : \forall s>t,\forall x\in A,\forall y\in\mathbb Z^d, (x,0)\not\rightsquigarrow(y,s)\big],\\
	\mathbb P_{\mathrm{cpmre}}^{\{0\},b}\left(\mathcal S\right)&=\mathbb P^b\big[\exists  (t_n)_{n\in\mathbb N} : t_n\overset{n\to\infty}{\longrightarrow}\infty, t_1<t_2<\cdots, \forall n\in\mathbb N, (0,0)\rightsquigarrow(0,t_n)\big].
\end{aligned}\end{equation*}
Similar statements can be made about survival and extinction for the contact processes which only use \CPt{$\beta_{k-1},\mu^*$}-paths or \CPt{$\beta^*,\mu_0$}-paths. It should now be clear that, for example, 
\begin{equation*}
	\mathbb P_{\beta^*,\mu_0}^{A}(\mathcal S)\leq\mathbb P_{\mathrm{cpmre}}^{A,b}(\mathcal S)
\end{equation*}
when $v^\beta\prec b$ since the domination \eqref{eq:P<P-def} exists.

This graphical construction as stated is for the child-CPMRE (where the random environment at child site $x$ determines whether an arrow pointing from $y$ to $x$ can be used). By reversing all arrows (with all arrows now pointing from $x$ to $y$), we achieve a graphical construction for the parent-CPMRE, where the random environment at (now) parent site $x$ determines whether it can transmit the infection to site $y$. Note that reversing arrows for the graphical construction of a standard contact process still gives a graphical construction for the same process. However, the child- and parent-CPMREs are not identical processes. We address this no further and by default are always considering the child-CPMRE.

\subsection{Proof of Theorem \ref{thm:main_CPMRE}}
\label{sec:cpmre-proof}

We proceed in two primary stages. First, we establish dominations with standard contact processes which give our initial survival and extinction results for the CPMRE. These have restrictions on the initial environmental distributions though. The second stage is to show that these initial distributions can be arbitrary. We remind the reader that $e_j$ is the vector with unit mass on state $j$ and that $v^\beta,v^\mu$ are the eigenvectors associated, respectively, with minimal eigenvalues $\beta^*,\mu^*$. 

First, assume $B^x_0\sim\pi$ at each site $x$, as \eqref{eq:vlam<pi<vdel} shows this is sufficient to allow both desired ``asterisk-circle dominations'' to exist. Given an arbitrary realization of our graphical structure $\omega\in\Omega$, then, almost surely, along the timeline for each site $x$, $A_*^{xy}(\omega) \subset A_\circ^{xy}(\omega)$ for each $y\in\mathcal N(x)$, and $D_*^{x}(\omega) \subset D_\circ^{x}(\omega)$. 

It should be clear that we now have a coupling of our CPMRE with a contact process having infection rate $\beta_{k-1}$ and recovery rate $\mu^*$. This contact process uses every infection arrow, regardless of its markings, but only uses recovery symbols marked with an asterisk (which are all circled as well). 

The CPMRE only uses circled recovery symbols and those arrows with circles at their tips. Hence, the \CPt{$\beta_{k-1},\mu^*$} uses every arrow the CPMRE does (and possibly more), and the CPMRE uses every recovery symbol that this contact process does (and possibly more). By a standard time-rescaling argument, the contact process with infection rate $\beta_{k-1}$ and recovery rate $\mu^*$ is subcritical when $\beta_{k-1}/\mu^*\leq\lambda_c$ and dies out. Since it dominates our CPMRE, then the CPMRE dies out as well. 

We also have a coupling of our CPMRE with the \CPt{$\beta^*,\mu_0$} where the former dominates the latter. The CPMRE uses all arrows with circles at their tips, but this contact process can only use those with asterisks (which, by construction, all have circles at their tips). This contact process uses all recovery symbols, but the CPMRE only uses those which are circled. Hence, if this contact process is supercritical with $\beta^*/\mu_0>\lambda_c$, then so is the CPMRE. On the $d$-dimensional integer lattice, it is a well-known fact that survival of the contact process implies strong survival, so we have strong survival of the CPMRE. 

Now we proceed to remove the requirement that the initial environmental distribution satisfy any stochastic domination relationship with $v^\beta$ or $v^\mu$. The basic idea is intuitive. Our results are already established when each site is initially at environmental equilibrium, $B^x_0\sim\pi$ for all $x$, and convergence to $\pi$ occurs given any initial distribution. So, irrespective of the initial environmental state at each site, we can wait long enough until the environmental distribution at each site in the entire lattice is close enough to $\pi$, and our dominations with standard contact processes apply from that point in time onward. Our strict dominations are important for achieving this. We also use the fact that there is always a positive probability of avoiding infection or recovery symbols for sufficiently long periods of time. \looseness=-2

Two modified CPMRE models are used in this proof. First, fix a time $s$ so large that 
\begin{equation}\label{eq:lem31-mod-dom}
	v^\beta \precn e_0e^{sT}\prec e_{k-1}e^{sT}\precn v^\mu.
\end{equation}
This is clearly possible since $e_0\prec e_{k-1}$, $T$ is monotone, $ue^{sT}\to\pi$ as $s\to\infty$ for any (row vector) distribution $u$, and that we have the strict dominations in \eqref{eq:vlam<pi<vdel} (also see the explanation which follows \eqref{eq:vlam<pi<vdel}).

The first modified CPMRE $(\widetilde B,\widetilde X)$ has constant infection rate $\beta_{k-1}$ and disallows recoveries for $t\in[0,s]$ (again, with $s$ fixed so that \eqref{eq:lem31-mod-dom} is satisfied). The initial distribution at each site is fixed at $e_{k-1}$, the most favorable to the infection (i.e., $B^x_0=k-1$ for all $x$). For $t\in(s,\infty)$, it reverts to following the standard CPMRE parameters, $\beta,\mu, T$. Denote the distribution of this process by $\mathbb P_{s,\beta_{k-1}}^{A,e_{k-1}}$. Our graphical construction is already compatible with this modified model (as long as it is constructed with initial distribution $e_{k-1}$ at every site). This model simply ignores all recovery symbols and uses all arrows on the time interval $[0,s]$ and then operates using only circle-marked symbols thereafter. Clearly, $\mathbb P_{\mathrm{cpmre}}^{A,e_{k-1}} \prec \mathbb P_{s,\beta_{k-1}}^{A,e_{k-1}}$ since this domination can be achieved by coupling these processes on the same graphical structure. Note that $D^x_*$ is suppressed for all $x$ due to an incompatible initial distribution.

It is also trivial that $\mathbb P_{\mathrm{cpmre}}^{A,b}\prec\mathbb P_{\mathrm{cpmre}}^{A,e_{k-1}}$ since the CPMRE is stochastically increasing in the initial distribution due to monotonicity of $T$, but we cannot achieve this on our graphical construction which has a fixed initial distribution at each site. It is not hard to construct a different graph which allows different environmental processes to be coupled while respecting the monotonicity of $T$. Hence, it is clear that 
\begin{equation}\label{eq:P-compare-b-arb}
	\mathbb P_{\mathrm{cpmre}}^{A,b} (\mathcal E) \geq \mathbb P_{s,\beta_{k-1}}^{A,e_{k-1}}(\mathcal E)
\end{equation}
for any arbitrary initial distribution $b$.  We desire to show that $\mathbb P_{s,\beta_{k-1}}^{A,e_{k-1}}(\mathcal E)=1$ for this particular modified CPMRE when $\beta_{k-1}/\mu^*\leq\lambda_c$. By additivity, the set of initially infected sites is unimportant (so long as it is finite)---see (1.2) in Part I of \cite{LiggettSIS}.

Let $\epsilon>0$ be fixed and arbitrarily small (and recall that $s$ is fixed). Then, pick a large enough set $N_A=N_A(s)\subset \mathbb Z^d$ satisfying $A\subset N_A$, such that 
\begin{equation}\label{eq:P-compare-1-eps}
	\mathbb P_{s,\beta_{k-1}}^{A,e_{k-1}}(\text{all infected sites at time $s$ are in $N_A$})>1-\epsilon.
\end{equation}
This is possible since the modified CPMRE is a simple growth model up to time $s$, also referred to as the Richardson model or Eden model (a larger $s$ value requires a larger set of sites $N_A=N_A(s)$). From time $s$ onward, this modified CPMRE behaves as if it were a CPMRE being initialized with environmental distribution $e_{k-1}e^{sT}$ at each site (and some unspecified finite set of initially infected sites). 

The environmental process and the infection arrow processes are independently laid down in the graphical construction, so there is no problem with dependence in claiming $\widetilde B_s^x\sim e_{k-1}e^{sT}$ irrespective of the modified CPMRE's dynamics before time $s$.

Conditioning on the modified CPMRE's set of infected sites being a subset of $N_A$ at time $s$ (which occurs with high probability as discussed above), the probability of extinction is bounded below by that of a CPMRE whose initial set of infected sites is precisely $N_A$:
\begin{equation}\label{eq:P-compare-NA}
	\mathbb P_{s,\beta_{k-1}}^{A,e_{k-1}}(\mathcal E\mid \text{all infected sites at time $s$ are in $N_A$})\geq \mathbb P_{\mathrm{cpmre}}^{N_A,e_{k-1}e^{sT}}(\mathcal E).
\end{equation}




\addtolength{\abovedisplayskip}{-3.75px}
\addtolength{\belowdisplayskip}{-3.75px}

\vspace*{-16pt}

Since $s$ is fixed large enough to allow $e_{k-1}e^{sT}\prec v^\mu$, our initial argument where all sites were initiated with stationary environmental processes now applies, and the CPMRE with $B^x_0\sim e_{k-1}e^{sT}$, for all $x$, is dominated by a contact process 
with infection rate $\beta_{k-1}$ and recovery rate $\mu^*$, that is, $\mathbb P_{\mathrm{cpmre}}^{N_A,e_{k-1}e^{sT}}\prec \mathbb P_{\beta_{k-1},\mu^*}^{N_A}$, giving $\mathbb P_{\mathrm{cpmre}}^{N_A,e_{k-1}e^{sT}}(\mathcal E)\geq \mathbb P_{\beta_{k-1},\mu^*}^{N_A}(\mathcal E)$.

Now, combining \eqref{eq:P-compare-b-arb}, \eqref{eq:P-compare-1-eps}, and \eqref{eq:P-compare-NA}, along with assuming that $\beta_{k-1}/\mu^*\leq\lambda_c$, gives 
\begin{equation*}
	\begin{aligned}
		\mathbb P_{\mathrm{cpmre}}^{A,b} (\mathcal E) &\geq \mathbb P_{s,\beta_{k-1}}^{A,e_{k-1}}(\mathcal E)\\[-2px]
		& > (1-\epsilon)\mathbb P_{s,\beta_{k-1}}^{A,e_{k-1}}(\mathcal E\mid \text{all infected sites at time $s$ are in $N_A$})\\
		& \geq (1-\epsilon)\mathbb P_{\mathrm{cpmre}}^{N_A,e_{k-1}e^{sT}}(\mathcal E)\\
		& \geq (1-\epsilon)\mathbb P_{\beta_{k-1},\mu^*}^{N_A}(\mathcal E)=1-\epsilon.\\
	\end{aligned}
\end{equation*}
The last line follows since the contact process is subcritical with $\mathbb P_{\beta_{k-1},\mu^*}^{N_A}(\mathcal E)=1$.

Since $\epsilon>0$ is arbitrary, we conclude that the CPMRE goes extinct when the initial distribution at each site is arbitrary. The assumption that all sites have the same initial environmental distribution $b$ is unimportant since every possible initial distribution is dominated by $e_{k-1}$.

Now, we create our second modified CPMRE in the opposite way to what was just done. On $t\in[0,s]$  (with $s$ still fixed to allow \eqref{eq:lem31-mod-dom}) we prevent it from using any infection arrows and give it the maximal recovery rate $\mu_0$. Its initial distribution is $e_0$ at each site (least favorable to the infection), and we use this initial distribution in the construction of our graphical representation. Denote the distribution of this modified CPMRE with only site $x$ initially infected by $\mathbb P_{s,\mu_0}^{\{x\},e_0}$. Clearly, for arbitrary initial distribution $b$,  
\begin{equation}\label{eq:P-compare-S-b-arb}
	\mathbb  P_{s,\mu_0}^{\{x\},e_0}\prec \mathbb P_{\mathrm{cpmre}}^{\{x\},b}.
\end{equation}
Hence, all we need to show is that this modified CPMRE survives when $\beta^*/\mu_0>\lambda_c$. 

Assume that $\beta^*/\mu_0>\lambda_c$ and that both processes have only the origin initially infected. The event that this modified CPMRE experiences no recoveries during time interval $[0,s]$ is given by 
$\mathcal D_\emptyset=\{D^0\cap[0,s]=\emptyset\}$, which clearly has a positive probability, independent of the initial environmental distributions. Also, independence of the environmental process and the recovery point processes implies that we can consider the distribution of the environment at each site at time $s$ to be $e_{0}e^{sT}$ when conditioning on $\mathcal D_\emptyset$. Hence, from time $s$ onward, this modified CPMRE behaves exactly as the CPMRE with initial distribution  $e_{0}e^{sT}$ at each site (with the origin still the only infected site). This implies that 
\begin{equation}\label{eq:P-compare-S-cond-D0}
	\mathbb P_{s,\mu_{0}}^{\{0\},e_{0}}(\mathcal S\mid\mathcal D_\emptyset)=\mathbb P_{\mathrm{cpmre}}^{\{0\},e_{0}e^{sT}}(\mathcal S).
\end{equation}
Note that $\mathbb P_{s,\mu_{0}}^{\{0\},e_{0}}(\mathcal S)=\mathbb P_{s,\mu_{0}}^{\{0\},e_{0}}(\mathcal S\mid\mathcal D_\emptyset)\mathbb P(\mathcal D_\emptyset)$ since this modified CPMRE only survives if there are no recovery symbols for the origin before time $s$. Of course, $\mathbb P(\mathcal D_\emptyset)=e^{-s\mu_0}$ since $D^0$ is equivalent to a Poisson point process with intensity $\mu_0$. 

Since $s$ is fixed and chosen large enough that $v^\beta\prec e_{0}e^{sT}$, the CPMRE with $B^x_0\sim e_{0}e^{sT}$ at each site dominates a contact process with infection rate $\beta^*$ and recovery rate $\mu_0$: 
\begin{equation}\label{eq:P-compare-S-cp-dom}
	\mathbb P_{\beta^*,\mu_0}^{\{0\}}\prec \mathbb P_{\mathrm{cpmre}}^{\{0\},e_{0}e^{sT}}.
\end{equation}

\enlargethispage{1.0\baselineskip}

Putting together \eqref{eq:P-compare-S-b-arb}, \eqref{eq:P-compare-S-cond-D0}, and \eqref{eq:P-compare-S-cp-dom}, and assuming that $\beta^*/\mu_0>\lambda_c$, gives 
\begin{equation*}
	\begin{aligned}
		\mathbb P_{\mathrm{cpmre}}^{\{0\},b} (\mathcal S) &\geq \mathbb P_{s,\mu_0}^{\{0\},e_0}(\mathcal S)\\
		& = \mathbb P( \mathcal D_\emptyset )
		\mathbb P_{s,\mu_{0}}^{\{0\},e_{0}}(\mathcal S \mid \mathcal D_\emptyset)\\
		&= \mathbb P( \mathcal D_\emptyset )
		\mathbb P_{\mathrm{cpmre}}^{\{0\},e_{0}e^{sT}}(\mathcal S)\\
		&\geq \mathbb P( \mathcal D_\emptyset )
		\mathbb P_{\beta^*,\mu_0}^{\{0\}}(\mathcal S)
		>0.
	\end{aligned}
\end{equation*}
The last line follows since the contact process is supercritical with $\mathbb P_{\beta^*,\mu_0}^{\{0\}}(\mathcal S)>0$. 


\pagebreak

\addtolength{\abovedisplayskip}{3.75px}
\addtolength{\belowdisplayskip}{3.75px}

Again, as in the discussion about extinction, each site can have an arbitrary initial distribution here. It is unimportant that $B^x_0\sim b$ at all sites in the above argument since any initial distribution dominates $e_0$. 

The proof is now complete and shows that, when $\beta^*/\mu_0>\lambda_c$, the CPMRE with the arbitrary initial distribution at each site exhibits strong survival. Likewise, when $\beta_{k-1}/\mu^*\leq\lambda_c$ the CPMRE goes extinct irrespective of the initial environmental distributions. Of course, each result only applies when the relevant DCM property is present. 
\hfill $\square$ 

\medskip

Survival of the CPMRE may be possible when $\beta^*/\mu_0\leq\lambda_c$ (the opposite of our survival criteria), but stochastic domination of a subcritical \CPt{$\beta^*,\mu_0$} does not allow any conclusion about survival of the CPMRE. A similar statement can be made about the possibility of extinction when $\beta_{k-1}/\mu^*>\lambda_c$. We do not explore these parameter regimes here. 

Our use of stochastic domination is arguably a less technical approach than, say, a comparison with oriented percolation. The latter method is used frequently in the interacting particle system literature. There are still potentially interesting questions for CPMREs which might be addressed using such an approach, for example, to define critical parameter values and to study the behavior precisely at them. Also, one could consider relaxing the requirement that $T$ be monotone. Our MMPP stochastic domination can still apply when $T$ is not monotone (see Remarks \ref{rmk:FCM} and \ref{rmk:non-dcm}), but the argument we used to allow the initial distributions to be arbitrary no longer works as it relies on monotonicity of $T$. However, a comparison to percolation may be useful. Such an approach is used for a version of this CPMRE model with $k=2$ in \cite{steif-warf} (but where only recovery rates vary). We leave this open to future exploration of more general CPMREs. 

\begin{remark}\label{rmk:cmpre-perm}
	Theorem \ref{thm:main_CPMRE} can sometimes still be used when the infection and recovery rate vectors are not monotone. If a permutation applied to $\beta,T$ can create a DCM MMPP, then the survival criteria, Theorem \ref{thm:main_CPMRE}(i), applies. Similarly, if a permutation can turn the MMPP with $\mu,T$ into a DCM model, then our extinction criteria in Theorem \ref{thm:main_CPMRE}(ii), applies. In particular, see Remark \ref{rmk:non-dcm}. Also, the infection and recovery processes could be assumed to use distinct generator matrices (denoted, e.g., by $T^\beta,T^\mu$, respectively), and a trivial modification to Theorem \ref{thm:main_CPMRE} applies.
\end{remark}

\begin{remark}
	It is easy to prove that the CPMRE is monotone in each component of $\beta$ and $\mu$. If the CPMRE with $\beta,\mu,T$ is not infection-DCM, then it might be possible to decrease some components of $\beta$ to create a CPMRE with $\tilde\beta,\mu,T$ (with $\tilde\beta_j\leq\beta_j$ for all $j$) which is infection-DCM and survives according to Theorem \ref{thm:main_CPMRE}. This implies survival of the former CPMRE with $\beta$. Let $\beta^\lambda=\lambda\beta$ with parameter $\lambda$, and assume the model with $\lambda=1$ is infection-DCM and survives via Theorem \ref{thm:main_CPMRE}. It follows that there there is a critical value $\lambda_{\mathrm{cpmre}}\geq0$ such that $\lambda>\lambda_{\mathrm{cpmre}}$ gives survival and $\lambda<\lambda_{\mathrm{cpmre}}$ gives extinction (trivially, $\lambda=0$ gives extinction). However, if the conditions of Theorem \ref{thm:main_CPMRE}(i) are not met for any $\lambda$, then we can make no claims about survival nor about the existence of $\lambda_{\mathrm{cpmre}}$. We also leave open questions on the behavior at criticality and whether there is ever a weak survival regime. 
\end{remark}

\begin{remark}\label{rmk:cpmre-arb-graph}
	Theorem \ref{thm:main_CPMRE} can be restated for the CPMRE on a general graph by comparing $\beta^*/\max_j\mu_j$ and $\max_j\beta_j/\mu^*$ to the appropriate critical values for the contact process. However, if the CPMRE dominates a contact process which survives weakly, then, although this proves that the CPMRE survives, it is unclear if it is only weak survival or if it is strong survival. Similarly, if the CPMRE is dominated from above by a weakly surviving contact process, then the CPMRE at best survives weakly, but might instead go extinct. We leave this for future exploration. 
\end{remark}

\begin{remark}
	Reversing arrows in its graphical construction has no effect on the standard contact process. This shows that Theorem \ref{thm:main_CPMRE} applies equally to both the child-CPMRE and parent-CPMRE, since reversing all arrows in our graphical construction preserves the dominations with contact processes.
\end{remark}

Now, we look at a few example CPMRE models to exhibit determining when a model is infection- and recovery-DCM and deriving survival and extinction regimes for them. 

\subsection{A two-state CPMRE model}
\label{sec:example-cpmre-2}

For the following model, environmental state $0$ allows only recoveries, and state $1$ allows only infections. 
Let $\beta=(0,\lambda)$, $\mu=(\delta,0)$, and
\begin{equation*}
	T=
	\left(\begin{matrix}
		-\gamma & \gamma\\
		c\gamma & -c\gamma
	\end{matrix}\right)
\end{equation*}
with parameters $\lambda,\delta,\gamma,c>0$. The parameter $\gamma$ controls the overall speed of the random environment flipping between states, and $c$ governs any asymmetry between the environmental states. By Theorem \ref{thm:main_mmpp_cm}, this model is both infection- and recovery-DCM since $k=2$. 

Clearly, increasing $c$ makes survival more difficult since that makes the environment transition into the recovery-only state faster. Hence, decreasing $c$ makes survival easier. This reasoning leads to the intuition that, for any $\lambda,\delta$, there is a $c$ small enough which allows survival and a $c$ large enough which leads to extinction. This follows from the fact that this CPMRE behaves more and more like a contact process with births only in the former and one with deaths only in the latter. We offer no proof for these intuitive claims though. 

One could use the method in \cite{broman2007} to derive the survival and extinction regimes, but this requires matching the parametrization to the model given therein. Knowing that $\beta^*,\mu^*$ are eigenvalues enables the parametrization to be arbitrary. 

Computing the minimal eigenvalues of $D_{\beta}-T$ and $D_{\mu}-T$ (defined by the convention in Definition \ref{def:alpha*}) gives
\begin{equation*}
	\begin{aligned}
		\beta^*&= \frac12\left(\lambda + (1+c)\gamma - \sqrt{(\lambda+(1+c)\gamma)^2-4\lambda\gamma } \right),\\
		\mu^*&= \frac12\left(\delta + (1+c)\gamma - \sqrt{(\delta+(1+c)\gamma)^2-4c \delta\gamma } \right).  
	\end{aligned}
\end{equation*}
Note that both $\beta^*,\mu^*>0$ for any $c,\gamma,\lambda,\delta>0$. 

Theorem \ref{thm:main_CPMRE} gives extinction if $\max_j\beta_j/\mu^*=\lambda/\mu^*\leq\lambda_c$ and survival if $\beta^*/\max_j\mu_j=\beta^*/\delta>\lambda_c$. This survival criterion requires $\delta\lambda_c<\gamma$ since it is equivalent to (via some algebra)
\begin{equation*}
	\frac{\lambda}{\delta}>\lambda_c\left( 1+\frac{c\gamma}{\gamma-\delta\lambda_c}\right).
\end{equation*}

Note that our survival result requires the environment to evolve sufficiently fast. As $\gamma$ decreases to $\delta\lambda_c$, then $\lambda/\delta$ must be increasingly large for Theorem \ref{thm:main_CPMRE} to give survival. It is left as an open question as to whether survival is possible for $\gamma\leq\delta\lambda_c$. One could pursue this model further, but our goal was to simply show application of some of our results. 

\subsection{A three-state CPMRE model} 
\label{sec:example-cpmre-3}

Now, we consider a CPMRE model with three environmental states. Assume $\lambda,\delta>0$. State $0$ allows only recoveries (with maximal recovery rate $2\delta$), state $1$ allows infections and recoveries (with rate parameters $\lambda$ and $\delta$, respectively) and state $2$ only allows infections (with maximal infection rate $2\lambda$). 

The infection and recovery rate vectors are thus $\beta=(0,\lambda,2\lambda)$, $\mu=(2\delta,\delta,0)$, and we assume the generator matrix, for $\gamma>0$, is given by 
\begin{equation*}
	T=\left(\begin{matrix}
		-\gamma & \gamma & 0\\
		\frac\gamma2 & -\gamma & \frac\gamma2\\
		0 & \gamma & -\gamma\\
	\end{matrix}\right).
\end{equation*}
The environment can thus only increment up or down by one, and $\gamma$ can be thought of as governing the speed of environmental transitions. 

The minimal eigenvalues of $D_\beta-T$ and $D_\mu-T$ are tractable: 
\begin{equation*}
	\begin{aligned}
		\beta^* &= \lambda+\gamma-\sqrt{\lambda^2+\gamma^2},\\
		\mu^* &= \delta+\gamma-\sqrt{\delta^2+\gamma^2}. 
	\end{aligned}
\end{equation*}
Applying Theorem \ref{thm:main_CPMRE} and assuming the model is infection-DCM, we have survival if
\begin{equation}\label{eq:k3model-surv}
	\frac{\beta^*}{\max_j\mu_j}=\frac{\lambda+\gamma-\sqrt{\lambda^2+\gamma^2}}{2\delta} >\lambda_c.
\end{equation}
Assuming the model is recovery-DCM, we have extinction if
\begin{equation}\label{eq:k3model-ext}
	\frac{\max_j\beta_j}{\mu^*}=\frac{2\lambda}{\delta+\gamma-\sqrt{\delta^2+\gamma^2}} \leq \lambda_c.
\end{equation}

We now  determine when this CPMRE is infection-DCM and further analyze the survival criteria \eqref{eq:k3model-surv}. Since $k=3$, we only need to check \eqref{eq:dcm} for $\ell=0,i=1,j=2$. Hence, confirming infection-DCM only requires showing that $(\beta_2-\beta_1)v_0^\beta\leq T_{10}-T_{20}$. Applying the parameterization of this model transforms that inequality to 
\begin{equation}\label{eq:lam-vlam-gam2}
	\lambda v^\beta_0\leq \gamma/2.
\end{equation}
Computing the eigenvector $v^\beta$ is somewhat tedious but possible, and its first component is
\begin{equation}\label{eq:vlam0}
	v_0^\beta=\frac{(\lambda + \sqrt{\lambda^2 + \gamma^2})^2}{(\lambda + \gamma + \sqrt{\lambda^2 + \gamma^2})^2}.
\end{equation}
Using \eqref{eq:vlam0} and solving \eqref{eq:lam-vlam-gam2} for $\lambda$ leads to the model being infection-DCM if and only if
$0<\lambda\leq\gamma$. 
By symmetry, it is recovery-DCM if and only if $0<\delta\leq \gamma$. 

We proceed to investigate the bounds on $\lambda$ which give thresholds for survival and extinction: $\lambda\leq\lambda_\mathcal E$ gives extinction and $\lambda>\lambda_\mathcal S$ gives survival---the bounds $\lambda_\mathcal E,\lambda_\mathcal S$ are defined below. From here onward, we set the timescale by letting $\delta=1$. 

Solving \eqref{eq:k3model-surv} for $\lambda$, we define $\lambda_\mathcal S$ and see that 
survival occurs if
\begin{equation}\label{eq:lamS}
	\lambda>\lambda_\mathcal S \coloneq 2\lambda_c\left(1+\frac{\lambda_c}{\gamma-2\lambda_c}\right) 
\end{equation}
with the additional assumption that $\gamma>2\lambda_c$ (still requiring $0<\lambda\leq \gamma$ for infection-DCM as our survival result relies on this DCM property). Note that $\gamma>2\lambda_c$ gives $\lambda_\mathcal S>2\lambda_c$. 

It is interesting that we now have upper and lower bounds on $\lambda$, giving an interval of values for $\lambda$ for which both survival occurs and the model is infection-DCM: 
\begin{equation*}
	\lambda_\mathcal S=2\lambda_c\left(1+\frac{\lambda_c}{\gamma-2\lambda_c}\right) < \lambda \leq \gamma.
\end{equation*}
It is intuitively easy to understand that the probability of survival is increasing in $\lambda$ and not hard to construct a graphical representation which illustrates that fact. So, all that is necessary here is to understand when $\lambda_\mathcal S<\gamma$, in which case, we know survival occurs for all $\lambda\in(\lambda_\mathcal S,\gamma]$. By monotonicity in $\lambda$, the infection then survives for all $\lambda>\lambda_\mathcal S$. This example CPMRE dominates a supercritical contact process for all $\lambda>\lambda_\mathcal S$, but this domination is achieved directly through our graphical construction only when $\lambda\leq \gamma$. For $\lambda> \gamma$, this domination of a supercritical contact process still holds by way of dominating another CPMRE with a smaller $\lambda\in(\lambda_\mathcal S,\gamma]$. 

Solving $\lambda_\mathcal S<\gamma$ for $\gamma$ leads to a quadratic and the requirement that
\begin{equation*}\label{eq:gam-surv-dcm-bound}
	\gamma>\lambda_c\big(2+\sqrt{2}\big)
\end{equation*}
in order for $(\lambda_\mathcal S,\gamma]$ to be nonempty. Note that this supersedes our initial requirement that $\gamma>2\lambda_c$. The latter is from the survival criteria \eqref{eq:lamS}. 

Using the well-known bounds of $1/(2d-1)\leq\lambda_c\leq 2/d$ for the $d$-dimensional integer lattice $\mathbb Z^d$ (see \cite{LiggettIPS}), we derive the bound on $\gamma$: 
\begin{equation*}\label{eq:gam-surv-suff-bnd}
	\gamma>\frac2d\big(2+\sqrt{2}\big).
\end{equation*}
This bound is sufficient for any dimension to have $(\lambda_\mathcal S,\gamma]\neq\emptyset$,  providing a range of $\lambda$ values giving both survival of the infection and the model being infection-DCM. 

Solving \eqref{eq:k3model-ext} for $\lambda$ (and defining $\lambda_\mathcal E$) shows that extinction occurs if
\begin{equation}\label{eq:lamE-def}
	\lambda\leq \lambda_\mathcal E \coloneq \frac12 \lambda_c \left(1+\gamma-\sqrt{1+\gamma^2}\right)
\end{equation}
with the requirement that $\gamma\geq1$ (since $\delta=1$) for the model to be recovery-DCM. Absent recovery-DCM, we cannot make this claim about extinction. We have that $\lambda_\mathcal E<\lambda_c/2$ since $1+\gamma-\sqrt{1+\gamma^2}<1$.

Checking consistency for our survival and extinction bounds, we see that $\lambda_\mathcal E<\lambda_\mathcal S$. Note that, as $\gamma\to\infty$, $\lambda_\mathcal E\uparrow \frac12\lambda_c$ and $\lambda_\mathcal S\downarrow2\lambda_c$. For $\gamma$ extremely large, the CPMRE jumps rapidly between its infection and recovery parameters and behaves more and more like a contact process with (average) infection rate $\lambda$ and (average) recovery rate one (with $\delta=1$ assumed) since the stationary distribution of $T$ is $\pi=(1/3,1/3,1/3)$. Hence, intuition might lead one to believe that, for large $\gamma$, survival should be possible for some $\lambda$ above the critical $\lambda_c$ but below our threshold for survival $\lambda_\mathcal S$, however, we cannot say for sure if this is the case. Our survival and extinction results are only sufficient bounds on $\lambda$ for the respective behaviors. This intuition makes these bounds seem quite rough, but we leave an open question as to what happens when $\lambda_\mathcal E<\lambda\leq\lambda_\mathcal S$.

Using the bounds $1/(2d-1)\leq\lambda_c\leq 2/d$, and noting that $\lambda_\mathcal S$ \eqref{eq:lamS} is an increasing function of $\lambda_c$, it follows that this CPMRE model survives in dimension $d$ when
\begin{equation*}
	\lambda>\frac4d \left(1+\frac{2/d}{\gamma-4/d}\right), 
\end{equation*}
with $\gamma>4/d$ required to ensure the model is infection-DCM. If the model is not infection-DCM, this survival result is not guaranteed. This CPMRE goes extinct if
\begin{equation*}
	\lambda\leq\frac{1}{4d-2}\raisebox{1px}{$\scalebox{0.8}{\Big(}$}1+\gamma-\sqrt{1+\gamma^2}\hspace*{1px}\raisebox{1px}{$\scalebox{0.8}{\Big)}$},
\end{equation*}
provided that $\gamma\geq1$ to ensure the model is recovery-DCM. If recovery-DCM fails to hold, this extinction result is not guaranteed. 

We now summarize our analysis of this CPMRE model as a proposition.
\begin{proposition}
	Consider the CPMRE having parameters $\beta,\mu,T$ as above, and with $\delta=1$. We have established the following results. 
	\begin{longlist}
		\setlength{\itemindent}{28pt}
		\setlength{\labelsep}{0.4em}
		\item The model is infection-DCM when $0<\lambda\leq \gamma$. 
		\item  If $\gamma>\lambda_c(2+\sqrt2)$, then the interval $(\lambda_\mathcal S, \gamma]$ is nonempty, and, for every $\lambda$ in this interval, the CPMRE is infection-DCM and dominates a supercritical contact process. Then, by monotonicity in $\lambda$, the CPMRE survives for all $\lambda>\lambda_\mathcal S$ (with $\lambda_\mathcal S$ defined in \eqref{eq:lamS}).
		\item The model is recovery-DCM when $\gamma\geq1$, in which case it goes extinct when $\lambda\leq\lambda_\mathcal E$ (with $\lambda_\mathcal E$ defined in \eqref{eq:lamE-def}). 
	\end{longlist}
\end{proposition}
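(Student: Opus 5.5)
The proposition only collects facts derived in the preceding discussion, so the plan is to verify its three parts in order, invoking Theorem \ref{thm:main_mmpp_cm} for the DCM claims and Theorem \ref{thm:main_CPMRE} for survival and extinction.

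For (i), I will use that $k=3$. Then the only triple in \eqref{eq:dcm} is $\ell=0,i=1,j=2$, and the condition becomes $(\beta_2-\beta_1)v^\beta_0\le T_{10}-T_{20}$, that is, $\lambda v^\beta_0\le\gamma/2$. This is \eqref{eq:lam-vlam-gam2}.

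First I check that $\beta^*=\lambda+\gamma-\sqrt{\lambda^2+\gamma^2}$ is an eigenvalue of $D_\beta-T$. Next I compute the left eigenvector explicitly. The equation $v(D_\beta-T)=\beta^*v$ gives a tridiagonal system. Writing $r=\sqrt{\lambda^2+\gamma^2}$, I expect the solution to be proportional to $((\lambda+r)^2,\ \gamma(\lambda+r)\cdot 2\cdot\tfrac12\ldots)$, that is, a geometric-type vector. Normalizing it yields \eqref{eq:vlam0}.

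Then I substitute and simplify $\lambda(\lambda+r)^2\le \tfrac{\gamma}{2}(\lambda+\gamma+r)^2$. I expect this step to be the main obstacle: reducing this inequality to $\lambda\le\gamma$. I will try the substitution $\lambda=\gamma u$, which leaves a one-variable inequality in $u$. At $u=1$ we get $r=\sqrt2\gamma$, and both sides equal $\gamma^3(1+\sqrt2)^2$, so equality holds at $u=1$. Monotonicity of the difference in $u$ would then give the claim.

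For (ii), assume $0<\lambda\le\gamma$, so the model is infection-DCM by (i). Since $\max_j\mu_j=2$, Theorem \ref{thm:main_CPMRE}(i) gives survival when $\beta^*>2\lambda_c$. To solve $\lambda+\gamma-r>2\lambda_c$, I isolate $r$ and square; this is valid because $\gamma>2\lambda_c$ ensures positivity. The result is $\lambda(\gamma-2\lambda_c)>2\lambda_c(\gamma-\lambda_c)$, which is exactly $\lambda>\lambda_\mathcal S$. The interval $(\lambda_\mathcal S,\gamma]$ is nonempty iff $\gamma^2-4\lambda_c\gamma+2\lambda_c^2>0$ with $\gamma>2\lambda_c$, that is, iff $\gamma>\lambda_c(2+\sqrt2)$. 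For $\lambda>\gamma$, I use monotonicity of the CPMRE in $\beta$: couple on the graphical construction so the arrow sets increase with $\lambda$. This comparison goes against a model with some $\lambda'\in(\lambda_\mathcal S,\gamma]$, giving survival for every $\lambda>\lambda_\mathcal S$.

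For (iii), I apply symmetry. Permutation-reversing $\mu,T$ yields $\widehat\mu=(0,\delta,2\delta)$ and $\widehat T=T$, since $T$ is symmetric under reversal. Applying (i) with $\lambda$ replaced by $\delta=1$ gives recovery-DCM iff $\gamma\ge1$. The eigenvalue $\mu^*=1+\gamma-\sqrt{1+\gamma^2}$ follows by the same computation. Theorem \ref{thm:main_CPMRE}(ii) with $\max_j\beta_j=2\lambda$ then gives extinction iff $2\lambda\le\lambda_c\mu^*$, which is $\lambda\le\lambda_\mathcal E$.
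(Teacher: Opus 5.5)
Your proposal follows the same route as the paper in all three parts. You reduce infection-DCM to the single triple $\ell=0,i=1,j=2$, i.e.\ $\lambda v^\beta_0\le\gamma/2$. You solve the criteria of Theorem~\ref{thm:main_CPMRE} for $\lambda$ and extend survival to $\lambda>\gamma$ by monotonicity in $\lambda$. You obtain (iii) from $\widehat\mu=(0,\delta,2\delta)$ and $\widehat T=T$.

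There are two small repairs in (i):
\begin{itemize}
\item The eigenvector is $v^\beta\propto\bigl((\lambda+r)^2,\,2\gamma(\lambda+r),\,\gamma^2\bigr)$. Its entries sum to $(\lambda+\gamma+r)^2$, so it has binomial rather than geometric shape.
\item The cleared-denominator difference $u(u+s)^2-\tfrac12(u+1+s)^2$, with $s=\sqrt{1+u^2}$, is \emph{not} monotone on $(0,\infty)$: its derivative at $u=0$ is $-1$. So ``monotonicity of the difference'' fails as stated.
\end{itemize}
Argue instead with the ratio $\lambda v^\beta_0/\gamma=u\bigl(1-\frac{1}{u+1+s}\bigr)^2$. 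It is a product of positive, strictly increasing functions and equals $\tfrac12$ exactly at $u=1$. Alternatively, set $a=\lambda+r$, so that $2\lambda a=a^2-\gamma^2$. The condition then reduces to $a^2-2\gamma a-\gamma^2\le0$, i.e.\ $a\le(1+\sqrt2)\gamma$, which is $\lambda\le\gamma$ because $a$ is increasing in $\lambda$.

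Finally, in (iii), Theorem~\ref{thm:main_CPMRE}(ii) gives only a sufficient condition. Your ``extinction iff $2\lambda\le\lambda_c\mu^*$'' should read ``the criterion holds iff'', and that is all the proposition needs.
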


\begin{remark}
	Monotonicity in $\lambda$ implies that there is a critical value $\lambda_{\mathrm{cpmre}}$ such that this particular CPMRE survives when $\lambda>\lambda_{\mathrm{cpmre}}$ and goes extinct when $\lambda<\lambda_{\mathrm{cpmre}}$. We clearly have $\lambda_\mathcal E \leq \lambda_{\mathrm{cpmre}}\leq\lambda_\mathcal S$ (with appropriate assumptions about $\gamma$), although, we conjecture that $\lambda_\mathcal E < \lambda_{\mathrm{cpmre}}<\lambda_\mathcal S$ due to the seeming roughness of our bounds $\lambda_\mathcal E,\lambda_\mathcal S$. We do not explore the possibility of a weak survival regime. Furthermore, we do not address the critical CPMRE, but we conjecture that it dies out---this could possibly be established by adapting the work in \cite{steif-warf}, which covers a similar claim for the CPREE studied in \cite{broman2007}. 
\end{remark}

\addtolength{\abovedisplayskip}{2px}
\addtolength{\belowdisplayskip}{2px}

\section{Downward conditional monotonicity}
\label{sec:dcm}

The main goal of this section is to prove Theorem \ref{thm:main_mmpp_cm}, establishing when an MMPP is DCM. We generally assume that $T$ is irreducible, monotone, that $\alpha$ is increasing, and that $k\geq3$ unless explicitly stated otherwise.  

We remind the reader of several important facts before proceeding. Downward conditional monotonicity (DCM) is defined in Definition \ref{def:DCM} with its matrix formulation given in Lemma \ref{lem:dcm-matrix-form}. Recall that $v^*$ is both the equilibrium no arrival distribution from Definition \ref{def:v} and also the (strictly positive) eigenvector associated with the minimal eigenvalue $\alpha^*$ of $D_\alpha-T$ (see Definition \ref{def:alpha*} and Lemma \ref{lem:lamstar-v-pos}). These facts are used repeatedly throughout this section. Additionally, we frequently use the shorthand $x_t$ notation first defined by \eqref{eq:xt-def} with matrix form given by Lemma \ref{lem:new-L(Bt|H)}. Also, recall that the unit mass on state $j\in S$ is denoted $e_j$. 

First, we will show that, for any arbitrary vector $x$ satisfying $v^*\preceq x$, it holds that 
\begin{equation}\label{eq:v<xt-S4first}
	v^*\preceq x_t=\frac{xe^{t(T-D_\alpha)}}{xe^{t(T-D_\alpha)}\bo}
\end{equation}
for all $t>0$, when the conditions in Theorem \ref{thm:main_mmpp_cm} are satisfied. Applying \eqref{eq:x<y-def}, the definition of stochastic ordering between vectors, we see that \eqref{eq:v<xt-S4first} is equivalent to  
\begin{equation}\label{eq:DCM-w-et(T-Da)}
	xe^{t(T-D_\alpha)}\bo \sum_{s\leq\ell}v^*_s\geq\sum_{s\leq\ell} \left(xe^{t(T-D_\alpha)}\right)_s
\end{equation}
holding for any $\ell\in\{0,1,\ldots,k-2\}=S\setminus\{k-1\}$. 
Considering $\ell=k-1$ is not necessary since it clearly gives equality (this is a theme throughout, that $\ell=k-1$ needs no consideration at many stages of our analysis). In the results that follow, we incrementally explore when \eqref{eq:DCM-w-et(T-Da)} holds for all $\ell$ which then implies that \eqref{eq:v<xt-S4first} holds. Then, we must establish downward conditional monotonicity (DCM), that $v^*\prec x\prec y$ implies $v^*\prec x_t\prec y_t$ for all $t>0$ (see Lemma \ref{lem:dcm-matrix-form}). We achieve all of this with the following sequence of lemmas.
\begin{longlist}
	\setlength{\itemindent}{28pt}
	\setlength{\labelsep}{0.4em}
	\item Lemma \ref{lem:one-mass-shift-nec-suff}: Prove that only $x$ of the form $x=v^*+\eta(e_j-e_i)$ need consideration, where $x$ is identical to $v^*$ but with amount of mass $\eta\in[0,v^*_i]$ shifted from state $i$ to state $j$. 
	\item Lemma \ref{lem:x=v+eta(ej-ei)}: Require strict inequality in \eqref{eq:dcm}, and show that $v^*\prec x$ implies $v^*\prec x_t$ for $t$ small enough when $x$ has form $x=v^*+\eta(e_j-e_i)$, again with $\eta\geq0$.  
	\item Lemma \ref{lem:x=v+SUMeta(ej-ei)}: Still requiring strict inequality in \eqref{eq:dcm}, argue that $v^*\prec x_t$ for arbitrary $x$ stochastically larger than $v^*$ and for any $t>0$. 
	\item Lemma \ref{lem:dcm-v-x-y-all-t}: Show that $v^*\prec x\prec y$ implies $v^*\prec x_t\prec y_t$ for all $t>0$, again, still requiring strict inequality in \eqref{eq:dcm}. 
	\item Lemma \ref{lem:Td-ad-model-exists}: Allow equality in \eqref{eq:dcm} by finding a perturbed model which satisfies it with strict inequality and then using weak convergence. 
\end{longlist}
After this series of lemmas, the proof of Theorem \ref{thm:main_mmpp_cm} is presented in Section \ref{sec:proof-of_main_dcm-thm}. 

\addtolength{\abovedisplayskip}{-2px}
\addtolength{\belowdisplayskip}{-2px}

We now informally describe our incremental upward mass shift technique where we consider $x$ of the form $x=v^*+\eta(e_j-e_i)$, with $\eta\in[0, v^*_i]$. Intuitively, we envision the vector $v^*$ as a collection of $k$ bins, with each bin labeled according to the vector indices in the appropriate order. Bin $i$ holds $v^*_i$ amount of probability mass for each $i\in S$. 

We begin with assuming $v^*\prec x$, that our $k$ bins match the components of $v^*$, and we proceed to shift an amount of mass $\eta_{ij}\geq0$ from bin $i$ to bin $j$ as follows. Shift the maximal amount of mass from bin $k-2$ to bin $k-1$ with the goal of reaching (and never exceeding) $x_{k-1}$ amount of mass in bin $k-1$. Set $\eta_{k-2,k-1}$ equal to the amount of mass shifted. If the amount of mass in bin $k-1$ is not yet equal to $x_{k-1}$, then proceed to shift mass from bin $k-3$ to bin $k-1$, setting the value of $\eta_{k-3,k-1}$ accordingly. After the final mass shift, from bin $0$ to bin $k-1$, the amount in bin $k-1$ is exactly $x_{k-1}$. At any step, the amount of mass shifted might be zero but is always nonnegative since $v^*\prec x$.

Next, shift mass from bins $0,\ldots,k-3$ to bin $k-2$ until the latter holds amount $x_{k-2}$. Continue shifting mass upward among the bins until the mass in all bins precisely matches the vector $x$, always shifting the maximal amount of mass possible at each step. 

The quantity shifted at any stage could be zero since we always stop once bin $j$ holds amount $x_j$ (setting $\eta_{ij}=0$ when appropriate). In this way, we have defined $\eta_{ij}\geq0$ for all pairs $i<j$ (of course, $\eta_{ij}\leq1$ always), and we can write that
\begin{equation}\label{eq:x=v-up-mass-shift}
	x=v^*+\sum_{i<j}\eta_{ij}(e_j-e_i).
\end{equation}
It is possible to formally define the $\eta_{ij}$ which result from this procedure, but that is not important. We simply require $x$ to be represented in the form \eqref{eq:x=v-up-mass-shift}, and such a form is generally not unique. For $x=v^*+\eta(e_j-e_i)$ with $\eta>0$ and pair of states $i<j$, we say that $x$ is \textit{stochastically larger than $v^*$ by a single upward mass shift}.  

For example, consider $v^*=(0.8,0.1,0.1)$ and $x=(0,0.5,0.5)$. Then two possibilities are $x=v^*+0.8(e_1-e_0)+0.4(e_2-e_1)$ and $x=v^*+0.1(e_2-e_1)+0.3(e_2-e_0)+0.5(e_1-e_0)$ (the latter is our ``maximal shift scheme'').  
 
We were careful to avoid any negative mass in our description of the sequence of mass shifts, but this is not important either. The representation \eqref{eq:x=v-up-mass-shift} is not truly a temporal sequence of mass shifts but simply a representation of $x$ in relation to $v^*$. For example, considering the shift of amount $0.4$ from $i=1$ to $j=2$ in the example $x=v^*+0.8(e_1-e_0)+0.4(e_2-e_1)$ as occurring first generates negative mass in the middle component. What is important for our purposes is to obtain any representation of the form \eqref{eq:x=v-up-mass-shift} with all $\eta_{ij}\in[0,1]$, and it is easy to understand that such a representation always exists when $v^*\prec x$. 

The next lemma shows that we only need to consider $x$ stochastically larger than $v^*$ by a single upward mass shift. Recall (by applying \eqref{eq:LBt|0} to \eqref{eq:cif}) that  $x_tD_\alpha\bo=\lambda(t\mid \tau_1>t)$ is the CIF conditioned on the first arrival having not yet occurred by time $t$ with $B_0\sim x$ initially. 

\begin{lemma}\label{lem:one-mass-shift-nec-suff}
Let $t\geq0$ be fixed. Then, $v^*\prec x_t$ for all $x$ satisfying $v^*\prec x$ if and only if it holds for all such $x$ stochastically larger than $v^*$ by a single upward mass shift. 
Similarly, $\alpha^*\leq x_tD_\alpha\bo$ for all $x$ stochastically larger than $v^*$ if and only if it holds for all $x$ stochastically larger than $v^*$ by a single upward mass shift. 
\end{lemma}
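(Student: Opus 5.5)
The plan is to reduce both statements to linear inequalities in $x$ and then use the decomposition \eqref{eq:x=v-up-mass-shift}. Fix $t\geq0$ and write $M=e^{t(T-D_\alpha)}$. By \eqref{eq:DCM-w-et(T-Da)}, $v^*\prec x_t$ is equivalent to $f_\ell(x)\geq0$ for every $\ell\in\{0,\ldots,k-2\}$, where
\begin{equation*}
	f_\ell(x)\coloneq xM\bo\sum_{s\leq\ell}v^*_s-\sum_{s\leq\ell}(xM)_s .
\end{equation*}
Each $f_\ell$ is linear in $x$ and is defined for all real row vectors $x$, not only for distributions. Similarly, since $xM\bo>0$ for nonzero nonnegative $x$, the inequality $\alpha^*\leq x_tD_\alpha\bo$ is equivalent to $g(x)\geq0$, where $g(x)\coloneq xMD_\alpha\bo-\alpha^*\,xM\bo$ is also linear.

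The key observation is that $v^*$ is a zero of every one of these functionals. Lemma \ref{lem:lamstar-v-pos} gives $v^*M=e^{-t\alpha^*}v^*$. Hence $f_\ell(v^*)=e^{-t\alpha^*}\big(\sum_{s\leq\ell}v^*_s-\sum_{s\leq\ell}v^*_s\big)=0$. Also $g(v^*)=e^{-t\alpha^*}(v^*D_\alpha\bo-\alpha^*)=0$ by \eqref{eq:a=av1}.

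Now take any $x$ with $v^*\prec x$ and write it in the form \eqref{eq:x=v-up-mass-shift}, $x=v^*+\sum_{i<j}\eta_{ij}(e_j-e_i)$ with $\eta_{ij}\geq0$. By linearity and $f_\ell(v^*)=0$,
\begin{equation*}
	f_\ell(x)=\sum_{i<j}\eta_{ij}\,f_\ell(e_j-e_i),
\end{equation*}
and likewise for $g$. So it suffices to show $f_\ell(e_j-e_i)\geq0$ and $g(e_j-e_i)\geq0$ for each pair $i<j$.

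These follow from the hypothesis applied to single upward mass shifts. For a pair $i<j$, choose $\eta\in(0,v^*_i]$; this is possible because $v^*>0$. The vector $x'=v^*+\eta(e_j-e_i)$ is then a probability vector that is stochastically larger than $v^*$ by a single upward mass shift. The hypothesis gives $0\leq f_\ell(x')=\eta f_\ell(e_j-e_i)$, so $f_\ell(e_j-e_i)\geq0$, and the same argument gives $g(e_j-e_i)\geq0$. Summing with nonnegative weights then yields $f_\ell(x)\geq0$ for all $\ell$ and $g(x)\geq0$, i.e.\ $v^*\prec x_t$ and $\alpha^*\leq x_tD_\alpha\bo$. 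The converse direction of both equivalences is trivial, since single-shift vectors form a subset of those satisfying $v^*\prec x$.

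The only step needing care is the existence of a decomposition with all $\eta_{ij}\geq0$. This is supplied by the maximal shift scheme described just before the lemma. Once the conditions are written as linear functionals vanishing at $v^*$, nonnegativity of the coefficients is all that matters, and the intermediate vectors never need to be valid distributions.
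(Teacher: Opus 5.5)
Your proof is correct and follows the paper's route. The paper's inequality \eqref{eq:sum-eta(ej-ej)sum-v} is exactly your statement $\sum_{i<j}\eta_{ij}f_\ell(e_j-e_i)\geq0$. The paper gets it the same way: it substitutes the mass-shift decomposition, cancels the $v^*$ contribution via $v^*e^{t(T-D_\alpha)}=e^{-t\alpha^*}v^*$, and sums the single-shift inequalities with nonnegative weights. Your explicit rescaling to $\eta\in(0,v^*_i]$, using homogeneity, also tightens a point the paper leaves loose: the paper invokes the single-shift case for arbitrary $\eta_{ij}\geq0$, although $v^*+\eta_{ij}(e_j-e_i)$ need not be a probability vector when $\eta_{ij}>v^*_i$.
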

\begin{proof}
Clearly, $v^*\prec x_t$ holding for all $x$ stochastically larger than $v^*$ (for some fixed $t$) implies that the same holds for those $x$ stochastically larger than $v^*$ by only a single upward mass shift. So we only need to argue the reverse implication. 

Assume $v^*\prec x_t$ holds (for our fixed $t$ value) for all $x$ stochastically larger than $v^*$ by a single upward mass shift. Now assume a more general $x$ satisfying $v^*\prec x$ and with form $x=v^*+\sum_{i<j}\eta_{ij}(e_j-e_i)$. Recall that all $\eta_{ij}\in[0,1]$ since we work with probability vectors. 

\addtolength{\abovedisplayskip}{2px}
\addtolength{\belowdisplayskip}{2px}

Using \eqref{eq:x<y-def}, the domination $v^*\prec x_t$ is equivalent to 
\begin{equation}\label{eq:v<xt}
	\sum_{s\leq\ell} v^*_s\geq\sum_{s\leq\ell} (x_t)_s
\end{equation}
holding for all $\ell\in S$ (also recall that $\ell=k-1$ generally needs no consideration). 
We substitute $v^*+\sum_{i<j}\eta_{ij}(e_j-e_i)$ in place of $x$ into \eqref{eq:v<xt}, and use the form of $x_t$ \eqref{eq:LBt|0}, to get 
\begin{equation*}
	\sum_{s\leq\ell} v^*_s\geq\sum_{s\leq\ell} (x_t)_s=\frac{\sum_{s\leq\ell}(xe^{t(T-D_\alpha)})_s}{xe^{t(T-D_\alpha)}\bo}=\frac{\sum_{s\leq\ell} \left((v^*+\sum_{i<j}\eta_{ij}(e_j-e_i))e^{t(T-D_\alpha)}\right)_s}{(v^*+\sum_{i<j}\eta_{ij}(e_j-e_i))e^{t(T-D_\alpha)}\bo}.
\end{equation*}
Then, use the fact that $e^{-t\alpha^*},v^*$ is an eigenpair of $e^{t(T-D_\alpha)}$ with $v^*\bo=1$ (see Lemma \ref{lem:lamstar-v-pos}) so that \eqref{eq:v<xt} is now seen as equivalent to
\begin{equation}\label{eq:v<xt-expand}
	\sum_{s\leq\ell} v^*_s\geq\frac{e^{-t\alpha^*}\sum_{s\leq\ell} v^*_s+\sum_{s\leq\ell}\left(\sum_{i<j}\eta_{ij}(e_j-e_i)e^{t(T-D_\alpha)}\right)_s}{e^{-t\alpha^*}+\sum_{i<j}\eta_{ij}(e_j-e_i)e^{t(T-D_\alpha)}\bo}.
\end{equation}
Note that the denominator on the right side of \eqref{eq:v<xt-expand} is positive so that it can be multiplied across to the left side to get
\begin{equation*}
	\begin{aligned}
		\Big(e^{-t\alpha^*}+\sum_{i<j}\eta_{ij}(e_j-e_i)&e^{t(T-D_\alpha)}\bo\Big)\sum_{s\leq\ell} v^*_s\\
		& \geq e^{-t\alpha^*}\sum_{s\leq\ell} v^*_s+\sum_{s\leq\ell}\Bigg(\sum_{i<j}\eta_{ij}(e_j-e_i)e^{t(T-D_\alpha)}\Bigg)_s.
	\end{aligned}
\end{equation*}
Distribute the summation over $s\leq\ell$ on the left side, cancel the terms $e^{-t\alpha^*}\sum_{s\leq\ell} v^*_s$ from both sides, and rearrange the order of summations over $s\leq\ell$ and $i<j$ on the right to then see that \eqref{eq:v<xt} is also equivalent to 
\begin{equation}\label{eq:sum-eta(ej-ej)sum-v}
	\sum_{i<j}\eta_{ij}(e_j-e_i) e^{t(T-D_\alpha)}\bo \sum_{s\leq\ell}v^*_s\geq\sum_{i<j}\eta_{ij} \sum_{s\leq\ell} \left((e_j-e_i)e^{t(T-D_\alpha)}\right)_s.
\end{equation}

Assume that \eqref{eq:v<xt} holds for any $x$ stochastically larger than $v^*$ by a single upward mass shift. Then, for any fixed pair $i<j$ with $x=v^*+\eta_{ij}(e_j-e_i)$, \eqref{eq:v<xt} is equivalent to 
\begin{equation}\label{eq:sum-eta(ej-ej)sum-v-single}
	\eta_{ij}(e_j-e_i) e^{t(T-D_\alpha)}\bo \sum_{s\leq\ell}v^*_s\geq\eta_{ij}\sum_{s\leq\ell} \left((e_j-e_i)e^{t(T-D_\alpha)}\right)_s
\end{equation}
which is simply \eqref{eq:sum-eta(ej-ej)sum-v} but lacking the summation over all pairs $i<j$. That \eqref{eq:sum-eta(ej-ej)sum-v-single} holds for each individual pair $i<j$ and any  $\eta_{ij}\geq0$, implies that \eqref{eq:sum-eta(ej-ej)sum-v} holds as well. 

The computation for the remaining statement of the lemma is nearly identical. 
\end{proof}

By Lemma \ref{lem:one-mass-shift-nec-suff}, we only need to consider those $x$ stochastically larger than $v^*$ by a single upward mass shift, $x=v^*+\eta(e_j-e_i)$ for each  pair of states $i<j$. 

For now, we assume two things: (i) that $T$ is strictly monotone (recall that this means the equations of \eqref{eq:Tmon} are satisfied with all being strict inequalities) and (ii) that \eqref{eq:dcm} is satisfied with strict inequalities for all $\ell<i<j$. We relax both of these assumptions and allow equality in both criteria towards the end of this section via constructing a weakly converging sequence of models (see Lemma \ref{lem:Td-ad-model-exists} and Section \ref{sec:proof-of_main_dcm-thm}).

We write $O(g(t))$ to mean an appropriately sized vector or matrix with each component being $O(g(t))$ as $t\to0$, and we use this notation extensively throughout this section. The next lemma establishes a condition which gives $v^*\prec x_t$ for small $t>0$ and $x$ stochastically larger than $v^*$ by a single upward mass shift.

\addtolength{\abovedisplayskip}{-2px}
\addtolength{\belowdisplayskip}{-2px}

\begin{lemma}\label{lem:x=v+eta(ej-ei)}
	Let $x=v^*+\eta(e_j-e_i)$ for some fixed $\eta>0$ and fixed pair $i<j$. Assume that $T$ is strictly monotone, that $\alpha$ is increasing, and that, for every $\ell<i$,
	\begin{equation}\label{eq:DCM-strict}
		(\alpha_j-\alpha_i) \sum_{s\leq\ell}v^*_s  < \sum_{s\leq\ell}(T_{is}-T_{js}).
	\end{equation}
	Then, there is an $\epsilon(i,j)>0$ such that, for any $t\in(0,\epsilon(i,j))$, \eqref{eq:DCM-w-et(T-Da)} holds for every $\ell\in S$.
\end{lemma}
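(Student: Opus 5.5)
By the reduction already carried out in the proof of Lemma \ref{lem:one-mass-shift-nec-suff}, for $x=v^*+\eta(e_j-e_i)$ the inequality \eqref{eq:DCM-w-et(T-Da)} is equivalent to \eqref{eq:sum-eta(ej-ej)sum-v-single}. Dividing by $\eta>0$, it suffices to show that, for every $\ell\le k-2$ and all small $t>0$,
\begin{equation*}
	f_\ell(t)\coloneq (e_j-e_i)e^{t(T-D_\alpha)}\bo\sum_{s\leq\ell}v^*_s-\sum_{s\leq\ell}\left((e_j-e_i)e^{t(T-D_\alpha)}\right)_s\geq 0 .
\end{equation*}
The plan is to Taylor expand $e^{t(T-D_\alpha)}=I+t(T-D_\alpha)+O(t^2)$ and read off the sign of $f_\ell$ from its leading term, treating the three ranges $\ell<i$, $i\le\ell<j$, and $\ell\ge j$ separately. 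Since there are only finitely many $\ell$, taking $\epsilon(i,j)$ to be the minimum of the thresholds over $\ell$ gives the lemma.

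\emph{Zeroth order.} At $t=0$ we have $(e_j-e_i)\bo=0$ and $\sum_{s\le\ell}(e_j-e_i)_s=\bo_{\{\ell\ge j\}}-\bo_{\{\ell\ge i\}}$. Hence $f_\ell(0)=0$ for $\ell<i$ and for $\ell\ge j$, while $f_\ell(0)=1>0$ for $i\le\ell<j$. In the middle range, continuity in $t$ already gives $f_\ell(t)>0$ for small $t$.

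\emph{First order, $\ell<i$ and $\ell\ge j$.} Using $T\bo=\mathbf 0$, the first-order coefficient is
\begin{equation*}
	f_\ell'(0)=-(\alpha_j-\alpha_i)\sum_{s\le\ell}v^*_s-\sum_{s\le\ell}\big((e_j-e_i)(T-D_\alpha)\big)_s .
\end{equation*}
For $\ell<i<j$ the $D_\alpha$ part contributes nothing to the partial sum, since the indices $i,j$ exceed $\ell$. So
\begin{equation*}
	f_\ell'(0)=\sum_{s\le\ell}(T_{is}-T_{js})-(\alpha_j-\alpha_i)\sum_{s\le\ell}v^*_s,
\end{equation*}
which is strictly positive by \eqref{eq:DCM-strict}. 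Thus $f_\ell(t)=tf_\ell'(0)+O(t^2)>0$ for small $t$.

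For $\ell\ge j$ (with $\ell\le k-2$), rewrite the partial sum using $T\bo=\mathbf0$: $-\sum_{s\le\ell}(T_{js}-T_{is})=\sum_{s>\ell}(T_{js}-T_{is})$. The $D_\alpha$ terms give $+(\alpha_j-\alpha_i)$, because both indices lie in $\{0,\dots,\ell\}$. This yields
\begin{equation*}
	f_\ell'(0)=(\alpha_j-\alpha_i)\Big(1-\sum_{s\le\ell}v^*_s\Big)+\sum_{s>\ell}(T_{js}-T_{is}).
\end{equation*}
The first term is nonnegative because $\alpha$ is increasing. The second is strictly positive by strict monotonicity, the second line of \eqref{eq:Tmon} applied to the triple $i<j\le\ell$. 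Hence $f_\ell'(0)>0$.

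The main point needing care is this first-order bookkeeping, in particular making the signs come out right in the case $\ell\ge j$, where the strictness of $T$'s monotonicity is exactly what rescues a possibly zero $\alpha$ contribution. Here $v^*>0$ (Lemma \ref{lem:lamstar-v-pos}) ensures $1-\sum_{s\le\ell}v^*_s>0$, though this is not even needed. The $O(t^2)$ remainders are uniform because the matrix exponential is analytic. Finally, $\ell=k-1$ is trivial because there $f_{k-1}\equiv 0$.
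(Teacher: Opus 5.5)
Your proposal is correct and follows essentially the same route as the paper: a first-order expansion of $e^{t(T-D_\alpha)}$ split into the cases $\ell<i$, $i\le\ell<j$ and $j\le\ell\le k-2$. The first case uses the strict inequality \eqref{eq:DCM-strict}, the second uses the zeroth-order term (the paper's $1/t$ term), and the third uses strict monotonicity of $T$ together with increasing $\alpha$; dividing out $\eta$ in $f_\ell$ also makes explicit that $\epsilon(i,j)$ is independent of $\eta$, which the paper notes separately.
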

\begin{proof}
	For small $t$, write $e^{t(T-D_\alpha)}=I+tT-tD_\alpha+O(t^2)$. Note that $T\bo=\mathbf 0$ (the zero column vector), since $T$ is a generator, and that this fact is used repeatedly throughout this section. Now apply this matrix exponential expansion to the components of \eqref{eq:DCM-w-et(T-Da)}, and use the fact that $v^*$ is an eigenvector with $v^*\bo=1$, to get 
	\begin{equation}\label{eq:xetTDbo}
		\begin{aligned}
			xe^{t(T-D_\alpha)}\bo &=(v^*+\eta(e_j-e_i))e^{t(T-D_\alpha)}\bo\\
			&= e^{-t\alpha^*}+\eta(e_j-e_i)(I+tT-tD_\alpha)\bo+\eta O(t^2)\\
			&=  e^{-t\alpha^*}-t\eta(\alpha_j-\alpha_i)+\eta O(t^2),
		\end{aligned}
	\end{equation}
	\begin{equation}\label{eq:S_ell_x}
		\begin{aligned}
			\sum_{s\leq\ell} \left(xe^{t(T-D_\alpha)}\right)_s &=\sum_{s\leq\ell} \left((v^*+\eta(e_j-e_i))e^{t(T-D_\alpha)}\right)_s\\
			&= e^{-t\alpha^*} \sum_{s\leq\ell} v^*_s + \eta\sum_{s\leq\ell}\big((e_j-e_i)(I+tT-tD_\alpha)\big)_s+\eta O(t^2).
		\end{aligned}
	\end{equation}
	
	We apply \eqref{eq:xetTDbo} and \eqref{eq:S_ell_x} to \eqref{eq:DCM-w-et(T-Da)}. Then, by canceling the terms $e^{-t\alpha^*}\sum_{s\leq\ell} v^*_s$ and dividing by $-t\eta$, \eqref{eq:DCM-w-et(T-Da)} is seen as equivalent to
	\begin{equation}\label{eq:DCM-w-Ot}
		(\alpha_j-\alpha_i)\sum_{s\leq\ell}v^*_s\leq\sum_{s\leq\ell}\big((e_i-e_j)\left(I/t+T-D_\alpha\right)\big)_s+O(t).
	\end{equation}
	Note that $e_j-e_i$ is reversed to be $e_i-e_j$ on the right. We proceed to establish when \eqref{eq:DCM-w-Ot} holds. 
	Also, that the $O(t)$ terms were combined and do not depend on $\eta$ is important to note. 
	
	The right side of \eqref{eq:DCM-w-Ot} has three components, which are considered under three cases: Case I: $\ell<i<j$, Case II: $i\leq\ell<j$, and Case III: $i<j\leq\ell$. Table \ref{tab:cases} examines these cases. 
	By strict monotonicity of $T$, the middle column of Table \ref{tab:cases} is positive for Cases I and III. Also, in Cases II and III, we use the fact that $i\leq\ell$ implies $\sum_{s\leq\ell}T_{is}=-\sum_{s>\ell}T_{is}$ since $T$ is a generator matrix.
	\begin{table}[H]
		\caption{
			The cases and components to consider for the right side of \eqref{eq:DCM-w-Ot}.}
		\label{tab:cases}
		\begin{tabular}{c c c c}
			\toprule
			Component:& \ $1/t\sum_{s\leq\ell}( (e_i-e_j)I )_s$ \ & \ $\sum_{s\leq\ell}( (e_i-e_j)T )_s$ \ & \ $-\sum_{s\leq\ell}( (e_i-e_j)D_\alpha )_s$ \ \\
			\midrule
			Case I: $\ell<i<j$ & $0$ &$\sum_{s\leq\ell}(T_{is}-T_{js})$ & $0$\\[18px]
			Case II: $i\leq\ell<j$ & $1/t$ & $-\sum_{s>\ell}T_{is}-\sum_{s\leq\ell}T_{js}$ & $-\alpha_i$ \\[18px]
			Case III: $i<j\leq\ell$ & $0$ & $\sum_{s>\ell}(T_{js}-T_{is})$ & $\alpha_j-\alpha_i$\\
			\bottomrule
		\end{tabular}
	\end{table}
	
	Now, under Case I with $\ell<i<j$, \eqref{eq:DCM-w-Ot} simplifies to		
	\begin{equation}\label{eq:dcm-o(t)}
		(\alpha_j-\alpha_i)\sum_{s\leq\ell}v^*_s \leq
		\sum_{s\leq\ell}(T_{is}-T_{js})+O(t)
	\end{equation}
	which holds for $t$ small enough when \eqref{eq:DCM-strict} is assumed. 
	
	\pagebreak
	
	Case II with $i\leq\ell<j$ gives \eqref{eq:DCM-w-Ot} simplifying to
	\begin{equation}\label{eq:caseII}
		(\alpha_j-\alpha_i)\sum_{s\leq\ell}v^*_s \leq
		\frac1t
		-\displaystyle\sum_{s>\ell}T_{is}-\sum_{s\leq\ell}T_{js}
		-\alpha_i
		+O(t).
	\end{equation}
	This case clearly holds when $t$ is small enough (and requires no restrictions on $\alpha$ or $T$). 
	
	Finally, under Case III with $i<j\leq\ell$, \eqref{eq:DCM-w-Ot} simplifies to 	
	\begin{equation}\label{eq:caseIII}
		(\alpha_j-\alpha_i)\sum_{s\leq\ell}v^*_s \leq
		\sum_{s>\ell}(T_{js}-T_{is})
		+\alpha_j-\alpha_i
		+O(t).
	\end{equation}
	Due to our strict monotonicity assumption on $T$, $\sum_{s>\ell}(T_{js}-T_{is})>0$ when $i<j\leq\ell$. Subtracting $\alpha_j-\alpha_i$ from both sides of \eqref{eq:caseIII} leads to 
	\begin{equation*}\label{eq:caseIII-2}
		-(\alpha_j-\alpha_i)\sum_{s>\ell}v^*_s \leq
		\sum_{s>\ell}(T_{js}-T_{is})
		+O(t)
	\end{equation*}
	which is clearly nonpositive on the left since $\alpha$ is increasing and positive on the right for $t$ small enough. Hence, this case clearly holds for $t$ small enough. 
	If $T$ were allowed to simply be (nonstrictly) monotone, then \eqref{eq:caseIII} could become $0\leq O(t)$ if $\alpha_i=\alpha_j$ (which is allowed), and this is problematic as we do not analyze the $O(t)$ term carefully. 
	
	It is now established that, for any given $\ell$ (and our fixed $i<j$), there exists an $\epsilon(i,j,\ell)>0$ such that \eqref{eq:DCM-strict} implies \eqref{eq:DCM-w-et(T-Da)} for $0<t<\epsilon(i,j,\ell)$. Let $\epsilon(i,j)=\min_{\ell}\epsilon(i,j,\ell)>0$. Now, with our fixed $i<j$, it is proved that \eqref{eq:DCM-strict} holding for all $\ell<i$ implies \eqref{eq:DCM-w-et(T-Da)} for all $\ell\in S$ (and thus also implies \eqref{eq:v<xt-S4first} for all $\ell$) when $t<\epsilon(i,j)$. Note that $\epsilon(i,j)$ does not depend on $\eta$, but it generally depends on the model parameters $\alpha,T$.  
\end{proof}

Lemma \ref{lem:x=v+eta(ej-ei)} establishes that $v^*\prec x$ implies $v^*\prec x_t$ for $t$ small enough when $x$ is stochastically larger than $v^*$ by a single upward mass shift. Note that the assumptions of increasing $\alpha$ and (strictly) monotone $T$ were only used in Case III, otherwise another requirement in addition to \eqref{eq:DCM-strict} arises, though we pursue this no further here. 

Now, we have to show that we can allow any $t>0$ and any $x$ stochastically larger than $v^*$.

\begin{lemma}\label{lem:x=v+SUMeta(ej-ei)}
	For an MMPP which satisfies \eqref{eq:DCM-strict} for all triples $\ell<i<j$ and has strictly monotone $T$ and increasing $\alpha$, 
	$v^*\preceq x$ implies that $v^*\prec x_t$ for all $t>0$. 
\end{lemma}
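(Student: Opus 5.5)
The plan is to upgrade the small-time statement of Lemma \ref{lem:x=v+eta(ej-ei)} to all $t>0$ in two steps. First, obtain a single $\epsilon>0$ that works for every $x$ with $v^*\prec x$. Second, propagate the domination forward in time using the semigroup structure of the normalized flow $x\mapsto x_t$.

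\emph{Step 1 (uniform short-time invariance).} Set $\epsilon=\min_{i<j}\epsilon(i,j)>0$. This is a minimum over finitely many pairs, and each $\epsilon(i,j)$ is positive by Lemma \ref{lem:x=v+eta(ej-ei)}. The hypotheses of that lemma hold for every pair $i<j$, because \eqref{eq:DCM-strict} is assumed for all triples $\ell<i<j$ and $T$ is strictly monotone with $\alpha$ increasing.

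The key point to check is the remark at the end of that proof: $\epsilon(i,j)$ does not depend on $\eta$. I would verify this carefully, since it is the main (if modest) obstacle. The remainder terms in \eqref{eq:xetTDbo}--\eqref{eq:S_ell_x} have the form $\eta\,(e_j-e_i)\bigl(e^{t(T-D_\alpha)}-I-t(T-D_\alpha)\bigr)$. Their size is bounded by $\eta\,C t^2$, where $C$ depends only on $\|T-D_\alpha\|$ on, say, $t\le 1$. After dividing by $t\eta$, the resulting $O(t)$ term in \eqref{eq:DCM-w-Ot} is therefore uniform in $\eta\in(0,1]$. The case $\eta=0$ is trivial, since then $x=v^*$ and $x_t=v^*$.

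Consequently, for every $t\in(0,\epsilon)$, \eqref{eq:DCM-w-et(T-Da)} holds for all single upward mass shifts. Lemma \ref{lem:one-mass-shift-nec-suff}, applied at each fixed $t\in(0,\epsilon)$, then gives $v^*\prec x_t$ for every $x$ with $v^*\prec x$.

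\emph{Step 2 (semigroup iteration).} By \eqref{eq:LBt|0}, $(x_s)_r=x_{s+r}$. This holds because $x_s e^{r(T-D_\alpha)}$ equals $x e^{(s+r)(T-D_\alpha)}$ divided by the positive scalar $xe^{s(T-D_\alpha)}\bo$, and this scalar cancels upon renormalization. Equivalently, it is the Markov property applied to the event of no arrivals.

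Now fix any $t>0$ and choose $n$ with $t/n<\epsilon$. Starting from $v^*\prec x$, Step 1 gives $v^*\prec x_{t/n}$. Applying Step 1 again with $x_{t/n}$ as the initial vector gives $v^*\prec x_{2t/n}$. By induction, $v^*\prec x_{nt/n}=x_t$.

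Nothing further is needed. Positivity of the normalizing constants follows from the strict positivity of $e^{t(T-D_\alpha)}$, noted in the proof of Lemma \ref{lem:lamstar-v-pos}, so every $x_s$ is a genuine probability vector. The whole argument therefore rests on the $\eta$-uniformity in Step 1 combined with the reduction in Lemma \ref{lem:one-mass-shift-nec-suff}.
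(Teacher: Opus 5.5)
Your proposal is correct and follows the paper's proof step for step. You take $\epsilon=\min_{i<j}\epsilon(i,j)$ from Lemma \ref{lem:x=v+eta(ej-ei)}, pass to arbitrary $x$ with $v^*\prec x$ for $t\in(0,\epsilon)$ via Lemma \ref{lem:one-mass-shift-nec-suff}, and then iterate $(x_s)_r=x_{s+r}$ over a partition of $t$ into pieces shorter than $\epsilon$. Your explicit check that $\epsilon(i,j)$ does not depend on $\eta$ is sound, where the paper only asserts it; in fact, once the $e^{-t\alpha^*}$ terms cancel, the condition is exactly linear in $\eta$, so $\eta$ drops out entirely.
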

\begin{proof}
	Lemma \ref{lem:x=v+eta(ej-ei)} shows that, for each pair $i<j$ and $x=v^*+\eta(e_j-e_i)$, we can find an $\epsilon(i,j)>0$ such that $v^*\prec x_t$
	when $t<\epsilon(i,j)$. Let $\epsilon=\min_{i<j}\epsilon(i,j)$.
	
	Now, $x$ being stochastically larger than $v^*$ by a single upward mass shift for any fixed pair $i<j$ gives $v^*\prec x_t$ whenever $t\in(0,\epsilon)$. Lemma \ref{lem:one-mass-shift-nec-suff} then shows that $v^*\prec x_t$ for arbitrary $x$ stochastically larger than $v^*$ and any $t\in(0,\epsilon)$. 
	
	Let $t>0$ be arbitrary, and choose $t_1,\ldots,t_n$ with $t=t_1+\cdots+t_n$ such that $0<t_j<\epsilon$ for all $j$. Since $t_1<\epsilon$, we have that $v^*\prec x$ implies $v^*\prec x_{t_1}$. Then, since $v^*\prec x_{t_1}$ and $t_2<\epsilon$, we have that 
	\begin{equation*}
		v^*\preceq (x_{t_1})_{t_2}=\frac{x_{t_1}e^{t_2(T-D_\alpha)}}{x_{t_1}e^{t_2(T-D_\alpha)}\bo}
		=\frac{xe^{(t_1+t_2)(T-D_\alpha)}}{xe^{(t_1+t_2)(T-D_\alpha)}\bo}=x_{t_1+t_2{}}.
	\end{equation*}
	Note that the above computation involves canceling $xe^{t_1(T-D_\alpha)}\bo$ from the $x_{t_1}$ in the numerator and denominator and uses the fact that the matrices  $e^{t_1(T-D_\alpha)}$ and $e^{t_2(T-D_\alpha)}$ commute. Iterating this further shows that $v^*\prec x_{t_1+\cdots+t_n}=x_t$. 
\end{proof}

The assumptions of increasing $\alpha$ and (strictly) monotone $T$ were only used to avoid the need for further analysis of Case III in the proof of Lemma \ref{lem:x=v+eta(ej-ei)}. Lemma \ref{lem:x=v+SUMeta(ej-ei)} relies on these assumptions only through its dependence on Lemma \ref{lem:x=v+eta(ej-ei)}. 

\pagebreak

The next lemma shows that conditioning on no arrivals preserves monotonicity as required by the definition of DCM given in Definition \ref{def:DCM}. We aim to show that $v^*\preceq x\preceq y$ implies  $v^*\preceq x_t\preceq y_t$ for all $t>0$, which is equivalent to our DCM property by Lemma \ref{lem:dcm-matrix-form}.  

\begin{lemma}\label{lem:dcm-v-x-y-all-t}
	An MMPP which satisfies \eqref{eq:DCM-strict} for all triples $\ell<i<j$ and has strictly monotone $T$ and increasing $\alpha$ is downward conditionally monotone as per Definition \ref{def:DCM}. 
\end{lemma}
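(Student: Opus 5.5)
Since Lemma \ref{lem:x=v+SUMeta(ej-ei)} already gives $v^*\prec x_t$ for all $t>0$ whenever $v^*\prec x$, it remains to show that $v^*\prec x\prec y$ implies $x_t\prec y_t$ for all $t>0$. The plan mirrors the preceding lemmas: reduce to single upward mass shifts, prove the ordering for small $t$ by a first-order expansion, and extend to all $t$ using the semigroup property $(x_{t_1})_{t_2}=x_{t_1+t_2}$.

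\emph{Reduction to single shifts.} Write $y=x+\sum_{i<j}\eta_{ij}(e_j-e_i)$ with $\eta_{ij}\ge0$. This is the same maximal-shift construction as \eqref{eq:x=v-up-mass-shift}, but now based at $x$ instead of $v^*$. The intermediate vectors in the chain of shifts are ordered, so it suffices to prove $x_t\prec y_t$ when $y=x+\eta(e_j-e_i)$ is a single upward shift, provided every intermediate vector still dominates $v^*$. This holds along the chain because $v^*\prec x\prec(\text{intermediate})$, and transitivity of $\prec$ then gives the general case. For a single shift, $x_t\prec y_t$ says that for each $\ell\le k-2$
\begin{equation*}
\frac{\sum_{s\le\ell}(xM_t)_s}{xM_t\bo}\ \ge\ \frac{\sum_{s\le\ell}(xM_t)_s+\eta\sum_{s\le\ell}((e_j-e_i)M_t)_s}{xM_t\bo+\eta(e_j-e_i)M_t\bo},\qquad M_t=e^{t(T-D_\alpha)}.
\end{equation*}
Cross-multiplying, this is equivalent to
\begin{equation*}
(e_j-e_i)M_t\bo\,\sum_{s\le\ell}(x_t)_s\ \ge\ \sum_{s\le\ell}\big((e_j-e_i)M_t\big)_s,
\end{equation*}
which is \eqref{eq:DCM-w-et(T-Da)} in the form \eqref{eq:sum-eta(ej-ej)sum-v-single}, except that $\sum_{s\le\ell}v^*_s$ is replaced by $\sum_{s\le\ell}(x_t)_s$.

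\emph{Small $t$.} Expand $M_t=I+t(T-D_\alpha)+O(t^2)$ and use $\sum_{s\le\ell}(x_t)_s=\sum_{s\le\ell}x_s+O(t)$. Dividing by $-t\eta$ reduces the inequality to
\begin{equation*}
(\alpha_j-\alpha_i)\sum_{s\le\ell}x_s\le\sum_{s\le\ell}\big((e_i-e_j)(I/t+T-D_\alpha)\big)_s+O(t),
\end{equation*}
which is \eqref{eq:DCM-w-Ot} with $x$ in place of $v^*$. We then run through the three cases of Table \ref{tab:cases}.
\begin{itemize}
\item Case II has the $1/t$ term and holds trivially for small $t$.
\item Case III holds as before by strict monotonicity of $T$ and increasing $\alpha$, since the left side minus $(\alpha_j-\alpha_i)$ is $-(\alpha_j-\alpha_i)\sum_{s>\ell}x_s\le0$.
\item Case I ($\ell<i<j$) is where DCM enters. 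Because $v^*\prec x$ we have $\sum_{s\le\ell}x_s\le\sum_{s\le\ell}v^*_s$, so the strict condition \eqref{eq:DCM-strict} gives
\begin{equation*}
(\alpha_j-\alpha_i)\sum_{s\le\ell}x_s\le(\alpha_j-\alpha_i)\sum_{s\le\ell}v^*_s<\sum_{s\le\ell}(T_{is}-T_{js}).
\end{equation*}
\end{itemize}

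\emph{Main obstacle: uniformity in $x$.} The $O(t)$ remainders must be uniform in $x$ (and $\eta$) so that we get a single $\epsilon>0$ working for all $v^*\prec x$ and all pairs $i<j$. The margin in Case I is at least $\sum_{s\le\ell}(T_{is}-T_{js})-(\alpha_j-\alpha_i)\sum_{s\le\ell}v^*_s>0$, and this does not depend on $x$. The remainder terms are bounded by constants depending only on $\|T\|,\|\alpha\|$, since $x$ ranges over the compact simplex and $\eta\le1$; in particular the denominators $xM_t\bo\ge e^{-t\max\alpha}$ stay bounded away from zero. I expect this bookkeeping, making the $O(t)$ truly uniform, to be the delicate point. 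It is exactly where the strict inequality in \eqref{eq:DCM-strict} and strict monotonicity of $T$ are needed.

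\emph{All $t$.} Fix $t>0$ and split it as $t=t_1+\dots+t_n$ with every $t_m<\epsilon$. By Lemma \ref{lem:x=v+SUMeta(ej-ei)}, $v^*\prec x_{t_1}$, and the small-$t$ step gives $x_{t_1}\prec y_{t_1}$. So the hypothesis $v^*\prec x_{t_1}\prec y_{t_1}$ is restored at time $t_1$. Iterating via $(x_{t_1})_{t_2}=x_{t_1+t_2}$, exactly as in the proof of Lemma \ref{lem:x=v+SUMeta(ej-ei)}, yields $v^*\prec x_t\prec y_t$. Lemma \ref{lem:dcm-matrix-form} then gives DCM.
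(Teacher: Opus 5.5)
Your proposal is correct and follows essentially the same route as the paper: single upward shifts based at $x$, a first-order expansion run through the three cases of Table~\ref{tab:cases} with an $\epsilon$ that is uniform in $x$ and $\eta$ (using the $x$-independent margin from \eqref{eq:DCM-strict} in Case~I), and then extension to all $t>0$ by splitting $t$ and using the semigroup property. The differences are cosmetic. You combine single shifts by chaining nonnegative intermediate vectors and using transitivity, whereas the paper uses the linearity in the $\eta_{ij}$ as in Lemma~\ref{lem:one-mass-shift-nec-suff}. In Case~I you compare $\sum_{s\le\ell}x_s$ rather than $\sum_{s\le\ell}(x_t)_s$ with $\sum_{s\le\ell}v^*_s$, absorbing the difference into the uniform $O(t)$.
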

\begin{proof}
	First, we establish that $v^*\preceq x\preceq y$ implies that, for $t$ small enough, 
	\begin{equation}\label{eq:vxy-ordered}
		v^*\preceq \frac{xe^{t(T-D_\alpha)}}{xe^{t(T-D_\alpha)}\bo}\preceq \frac{ye^{t(T-D_\alpha)}}{ye^{t(T-D_\alpha)}\bo}.
	\end{equation}
	Then, we show that \eqref{eq:vxy-ordered} holds for all $t>0$ which, by Lemma \ref{lem:dcm-matrix-form}, establishes DCM. 
	
	The proof of Lemma \ref{lem:x=v+eta(ej-ei)} expresses $x$ as upward mass shifts from $v^*$. We now express $y$ in terms of upward mass shifts from $x$. First, assume $v^*\prec x$ and that $y=x+\eta(e_j-e_i)$ for some fixed pair $i<j$ and $\eta>0$. We explore when $x_t\prec y_t$ holds. This is accomplished by following through the same computations as in the proof of Lemma \ref{lem:x=v+eta(ej-ei)} but with $x$ playing the role that $v^*$ played in that proof and $y$ playing the role that $x$ did. We differ from the computation there in one respect: we do not combine the $O(t^2)$ terms which appear in equations analogous to \eqref{eq:xetTDbo} and \eqref{eq:S_ell_x}. We label the resulting $O(t)$ terms with subscripts to specifically identify them. We do not show the detailed computations here, but, with $\ell<i<j$, this results in 
	\begin{equation}\label{eq:sDCM-x}
		\big( (\alpha_j-\alpha_i) +O_1(t) \big) \sum_{s\leq\ell}\left(x_t\right)_s  \leq \sum_{s\leq\ell}(T_{is}-T_{js}) +O_2(t)
	\end{equation}
	as our analogue to \eqref{eq:dcm-o(t)} for Case I. In other words, $x_t\prec y_t$ holds if and only if \eqref{eq:sDCM-x} holds. Note that $O_1(t)$ and $O_2(t)$ in \eqref{eq:sDCM-x} do not depend on $x$ in any way (nor on $\eta$).
	
	Since $v^*\prec x_t$ (for arbitrary $t>0$ by Lemma \ref{lem:x=v+SUMeta(ej-ei)}), it follows that, for any $t>0$, 
	\begin{equation*}\label{eq:xt-vs-v}
		\sum_{s\leq\ell}\left(x_t\right)_s \leq \sum_{s\leq\ell}v^*_s.
	\end{equation*}
	Since $\alpha$ is increasing and \eqref{eq:DCM-strict} is assumed for any $\ell<i<j$, it then follows that 
	\begin{equation}\label{eq:xt-vs-v2}
		(\alpha_j-\alpha_i)\sum_{s\leq\ell}\left(x_t\right)_s \leq (\alpha_j-\alpha_i)\sum_{s\leq\ell}v^*_s< \sum_{s\leq\ell}(T_{is}-T_{js}).
	\end{equation}
	Choose $\epsilon(i,j)>0$ small enough so that each $O_m(t)$ term, for $m=1,2$, in \eqref{eq:sDCM-x} satisfies
	\begin{equation}\label{eq:O(t)bound}
		|O_m(t)|<\frac12\Bigg(\sum_{s\leq\ell}(T_{is}-T_{js}) - (\alpha_j-\alpha_i)\sum_{s\leq\ell}v^*_s\Bigg)
	\end{equation}
	for all $t<\epsilon(i,j)$ and all $\ell<i$. The right side in \eqref{eq:O(t)bound} being positive follows since \eqref{eq:DCM-strict} is assumed. Fix $t<\epsilon(i,j)$, then, apply \eqref{eq:xt-vs-v2}, \eqref{eq:O(t)bound}, and $\sum_{s\leq\ell}\left(x_t\right)_s\leq1$ to see that 
	\begin{equation*}
		\begin{aligned}
			\hspace*{-0.5px}\big(\hspace*{-0.25px}(\alpha_j-\alpha_i) +O_1(t) \hspace*{-0.25px}\big) \hspace*{-0.75px}\sum_{s\leq\ell}\left(x_t\right)_s
			&< (\alpha_j-\alpha_i)\sum_{s\leq\ell}v^*_s + \frac12\Bigg(\sum_{s\leq\ell}(T_{is}-T_{js}) - (\alpha_j-\alpha_i)\sum_{s\leq\ell}v^*_s\Bigg)\\
			&=\sum_{s\leq\ell}(T_{is}-T_{js}) - \frac12\Bigg( \sum_{s\leq\ell}(T_{is}-T_{js}) - (\alpha_j-\alpha_i)\sum_{s\leq\ell}v^*_s \Bigg)\\
			&<\sum_{s\leq\ell}(T_{is}-T_{js}) +O_2(t). 
		\end{aligned}
	\end{equation*}
	This implies \eqref{eq:sDCM-x}. Thus, we conclude that $v^*\prec x_t\prec y_t$ for any $t\in(0,\epsilon(i,j))$, any $x$ satisfying $v^*\prec x$, and any $y=x+\eta(e_j-e_i)$ (for our fixed $i<j$). Similar reasoning applies to Cases II and III, and we simply let our $\epsilon(i,j)>0$ be a value which works for all three cases. 
	
	\pagebreak
	
	As done in the proof of Lemma \ref{lem:x=v+SUMeta(ej-ei)}, we let $\epsilon=\min_{i<j} \epsilon(i,j)$. Then, given any $x,y$ satisfying $v^*\prec x\prec y$, we can write $y=x+\sum_{i<j}\eta_{ij}(e_j-e_i)$. An argument similar to what was done in the proof of Lemma \ref{lem:one-mass-shift-nec-suff} then shows that $v^*\prec x_t\prec y_t$ for any $t\in(0,\epsilon)$ since it holds for any such $y$ stochastically larger than $x$ by any single upward mass shift. We reiterate here the fact that $\epsilon$ does not in any way depend on $x$ (nor on the particular $\eta_{ij}$). 
	
	We then follow through the rest of the argument in the proof of Lemma \ref{lem:x=v+SUMeta(ej-ei)}. Arbitrary $t>0$ is expressed as $t=t_1+\cdots + t_n$ where each $t_j\in(0,\epsilon)$. Now, given $v^*\prec x\prec y$, it follows that $v^*\prec x_{t_1}\prec y_{t_1}$. Then, likewise, we have $v^*\prec x_{t_1+t_2}\prec y_{t_1+t_2}$. Iteratively, we then have $v^*\prec x_{t_1+\cdots+t_n}\prec y_{t_1+\cdots+t_n}$. Hence, \eqref{eq:vxy-ordered} holds for all $t>0$. 
	
	We conclude that $T$ being strictly monotone, $\alpha$ increasing, and \eqref{eq:DCM-strict} holding for all triples $\ell<i<j$ provides that $v^*\prec x\prec y$  implies $v^*\prec x_t\prec y_t$ for all $t>0$. This shows that the MMPP is DCM. 
\end{proof}

We now construct a model $(B^\delta,X^\delta)$ with the same arrival rate vector $\alpha$ but with generator $T^\delta$ that converges to $T$ component-wise as $\delta\to0$, and we refer to this as the \textit{perturbed model}.
\begin{lemma}\label{lem:Td-ad-model-exists}
	Consider an MMPP $(B,X)$ with irreducible, monotone $T$, increasing $\alpha$, eigenvector $v^*$, and which satisfies, for all triples $\ell<i<j$,
	\begin{equation}\label{eq:DCM-lem45}
		(\alpha_j-\alpha_i)\sum_{\ell\leq s} v^*_s\leq \sum_{\ell\leq s} (T_{is}-T_{js}).
	\end{equation}
	For any $\delta>0$, there exists an MMPP $(B^\delta,X^\delta)$ with strictly monotone $T^\delta=T+\delta E$ (with $E$ an appropriate matrix), the same $\alpha$, eigenvector $v^\delta$, and which satisfies, for all triples $\ell<i<j$,
	\begin{equation}\label{eq:strict-dcm-delta}
		(\alpha_j-\alpha_i)\sum_{\ell\leq s} v^\delta_s < \sum_{\ell\leq s} (T^\delta_{is}-T^\delta_{js}).
	\end{equation}
	Furthermore, $(B^\delta,X^\delta)$ weakly converges to $(B,X)$ as $\delta\to0$, and the eigenvectors satisfy $v^*\prec v^\delta$ for all $\delta>0$. 
\end{lemma}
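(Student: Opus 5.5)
Throughout I read the sums in \eqref{eq:DCM-lem45} and \eqref{eq:strict-dcm-delta} as $\sum_{s\leq\ell}$, consistent with \eqref{eq:dcm}. The plan is to take an explicit perturbation $T^\delta=T+\delta E$, with $E$ a fixed irreducible generator, and to check the four required properties separately.

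\emph{Choice of $E$ and strict monotonicity.} I would take $E=F+MU$ with a constant $M\geq0$ fixed later. Here $F_{is}=f(|i-s|)$ for $s\neq i$, with $f>0$ strictly decreasing and diagonal entries chosen so that $F\bo=\mathbf 0$. The second piece is the ``upward jump'' generator $U_{i,k-1}=1$ for $i<k-1$, with $U_{ii}=-1$ and all other entries zero.
\begin{itemize}
\item For $\ell<i<j$, every $s\leq\ell$ is closer to $i$ than to $j$, so $\sum_{s\leq\ell}(F_{is}-F_{js})>0$. Symmetrically, $\sum_{s>\ell}(F_{js}-F_{is})>0$ for $i<j\leq\ell$. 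Hence $F$ is strictly monotone in the sense of \eqref{eq:Tmon}.
\item A direct check of \eqref{eq:Tmon} shows $U$ is (non-strictly) monotone, and that $\sum_{s\leq\ell}(U_{is}-U_{js})=0$ for $\ell<i<j$.
\end{itemize}
Since the conditions \eqref{eq:Tmon} are linear in the generator, $T^\delta$ is strictly monotone and irreducible for every $\delta>0$. Moreover, for each triple $\ell<i<j$,
\[
\sum_{s\leq\ell}(T^\delta_{is}-T^\delta_{js})=\sum_{s\leq\ell}(T_{is}-T_{js})+\delta c_{\ell ij},
\qquad c_{\ell ij}=\sum_{s\leq\ell}(F_{is}-F_{js})>0 .
\]

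\emph{Weak convergence.} This part is routine. $T^\delta\to T$ entrywise, so $e^{tQ^\delta}\to e^{tQ}$ for the QBD generators \eqref{eq:Q}. Hence the finite-dimensional distributions of $(B^\delta,X^\delta)$ converge to those of $(B,X)$ (same initial law). Tightness in the Skorokhod space follows from the uniformly bounded jump rates, which gives weak convergence. Separately, $\alpha^\delta=\alpha^*(T^\delta)$ and $v^\delta$ depend continuously (indeed analytically) on $\delta$, because $\alpha^*$ is a simple eigenvalue by Lemma \ref{lem:lamstar-v-pos}. Therefore $v^\delta\to v^*$.

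\emph{The key point: $v^*\prec v^\delta$.} Once this holds, strict inequality follows immediately. Since $\alpha$ is increasing,
\[
(\alpha_j-\alpha_i)\sum_{s\leq\ell}v^\delta_s\leq(\alpha_j-\alpha_i)\sum_{s\leq\ell}v^*_s\leq\sum_{s\leq\ell}(T_{is}-T_{js})<\sum_{s\leq\ell}(T^\delta_{is}-T^\delta_{js}),
\]
which is \eqref{eq:strict-dcm-delta}. So everything hinges on $v^*\prec v^\delta$, and I expect this to be the main obstacle.

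My first attempt uses $v^\delta=\lim_{t\to\infty}$ of the normalized $v^*e^{t(T^\delta-D_\alpha)}$, from Lemma \ref{lem:lamstar-v-pos}. Writing $T^\delta-D_\alpha=(T-D_\alpha)+\delta E$ and differentiating the eigen-equation $v^\delta(D_\alpha-T^\delta)=\alpha^\delta v^\delta$ at $\delta=0$ gives
\[
w(D_\alpha-T-\alpha^*I)=v^*E-(v^*E\bo)\,v^*,\qquad w\bo=0,
\]
where $w=\tfrac{d}{d\delta}v^\delta|_{\delta=0}$. Because $E\bo=\mathbf 0$, the correction term vanishes and the right-hand side is just $v^*E$. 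Solving this on the complement of $v^*$ gives $w$ as a resolvent integral, $w=\int_0^\infty v^*E\,e^{t(T-D_\alpha+\alpha^*I)}\,dt$, modulo the $v^*$-direction.

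I would then choose $M$ large enough that the upward push $Mv^*U$ dominates. The goal is that each partial sum $\sum_{s\leq\ell}w_s$ is strictly negative for $\ell<k-1$, i.e.\ $w$ is a strictly upward signed measure. This gives $v^*\prec v^\delta$ for all small $\delta$. For larger $\delta$, I would either rescale (only small $\delta$ matter for the later weak-convergence argument, and one may replace $\delta$ by $\min(\delta,\delta_0)$), or iterate the same argument along $\delta$, since the sign computation only uses monotonicity of $T^{\delta}$.

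If the derivative computation proves unwieldy, the fallback is a coupling argument. Adding upward jumps only moves mass up, and killing is at increasing rates. Using the monotonicity of $T$ and the mass-shift representation \eqref{eq:x=v-up-mass-shift}, I would compare the normalized $v^*e^{t(T^\delta-D_\alpha)}$ with $v^*e^{t(T-D_\alpha)}$, the latter being constant in $t$ after normalization. The aim is to show the former stays stochastically above $v^*$ for small $t$, and then pass to the limit $t\to\infty$.
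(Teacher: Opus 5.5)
\textbf{Gap in the key step.} Most of your proposal is sound: the construction $E=F+MU$, strict monotonicity of $T^\delta$, weak convergence, and the final display deducing \eqref{eq:strict-dcm-delta} from $v^*\preceq v^\delta$ (that last step is exactly the paper's). The gap is the step you flag, $v^*\preceq v^\delta$, and taking $M$ large is not, by itself, an argument for it.

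Large $M$ controls only the \emph{input}. Since $v^*U=e_{k-1}-v^*$, it makes $\sum_{s\le\ell}(v^*E)_s<0$ for $\ell<k-1$. But $w$ is that input transported by the linearization at $v^*$ of the no-arrival flow $x\mapsto x_u$, namely $w=\int_0^\infty v^*E\,e^{u(T-D_\alpha+\alpha^*I)}(I-\bo v^*)\,du$.

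Your first-order equation is also off. The correction term is $(wD_\alpha\bo)\,v^*=-\frac{v^*E\mathbf y}{v^*\mathbf y}\,v^*$, where $\mathbf y$ is the right Perron vector of $D_\alpha-T$. This is nonzero in general, since $\mathbf y$ is not a multiple of $\bo$. Correspondingly, your integral diverges without the projection $I-\bo v^*$.

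Because $w$ is linear in $v^*E$, large $M$ only reduces the question to whether the transported $e_{k-1}-v^*$ has strictly negative partial sums, and you give no argument for that. The natural reason would be that the linearized flow preserves upward signed measures. That is exactly what \eqref{eq:DCM-lem45} supplies, yet your proposal uses this hypothesis only in the last display.

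Your fallback has the same hole. The two effects you cite push the normalized law in opposite directions: extra upward jumps raise it, while larger killing at higher states lowers it after conditioning. So coupling alone does not settle the sign. Moreover, controlling only the trajectory started from $v^*$ for small $t$ cannot be iterated to $t\to\infty$. You need invariance of the whole set $\{x:v^*\preceq x\}$ under the perturbed no-arrival flow.

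\textbf{How the paper closes it, and how to repair your route.} That invariance is what the paper proves, for each fixed $\delta>0$. It takes $x=v^*+\eta(e_j-e_i)$ and expands $e^{t(T^\delta-D_\alpha)}$ to first order in $t$, separately on $v^*$ and on $\eta(e_j-e_i)$. It uses \eqref{eq:DCM-lem45} for the $T$-terms in Case I. What remains is $\delta\sum_{s\le\ell}(v^*E)_s+\delta\eta\sum_{s\le\ell}(E_{js}-E_{is})<0$, which holds because $E$ is strictly monotone and its large constant $c$ (the analogue of your $M$) makes the partial sums of $v^*E$ negative. Superposition over mass shifts, semigroup iteration and $t\to\infty$ then give $v^*\preceq v^\delta$ for \emph{every} $\delta>0$. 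Your $E$ with $M$ large has both properties, so this argument applies to it verbatim.

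Alternatively, your derivative route can be repaired. Work in cumulative coordinates $(\sum_{s\le\ell}h_s)_{\ell<k-1}$, acting on row vectors. The linearized generator then has $(m,\ell)$ off-diagonal entries
\begin{itemize}
\item $\sum_{s\le\ell}(T_{ms}-T_{m+1,s})-(\alpha_{m+1}-\alpha_m)\sum_{s\le\ell}v^*_s$ for $\ell<m$, which is $\ge0$ by \eqref{eq:DCM-lem45} with $(i,j)=(m,m+1)$;
\item $\sum_{s>\ell}(T_{m+1,s}-T_{ms})+(\alpha_{m+1}-\alpha_m)\sum_{s>\ell}v^*_s$ for $\ell>m$, which is $\ge0$ by monotonicity and increasing $\alpha$.
\end{itemize}
It is Hurwitz, since its spectrum is $\alpha^*-\lambda$ over the other eigenvalues $\lambda$ of $D_\alpha-T$. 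Hence minus its inverse is entrywise nonnegative with positive diagonal. Strictly negative partial sums of $v^*E$ then give strictly negative partial sums of $w$.

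Even so, this yields $v^*\preceq v^\delta$ only for small $\delta$. Your suggestion of ``iterating along $\delta$'' would need the DCM-type inequality at each base point, not just monotonicity of $T^\delta$.
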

\begin{proof}
	Given the model $(B,X)$ satisfying the lemma statement, we proceed to construct the perturbed model $(B^\delta,X^\delta)$. 
	
	Recall that $v^*>0$ is a strictly positive eigenvector of $D_\alpha-T$ with $v^*\bo=1$ (Lemma \ref{lem:lamstar-v-pos}). Also recall that $k\geq3$ is assumed (as noted at the start of Section \ref{sec:dcm}). Letting $E_{\cdot i}$ denote column $i$ of $E$, the first column of matrix $E$ is defined by
	\begin{equation*}
		E_{\cdot 0}=(-c,1,1/2,1/3,\ldots,1/(k-1)),
	\end{equation*}
	with constant $c>0$ chosen large enough so that $v^*E_{\cdot0}<0$. It is not hard to understand that $c>k/v^*_0$ is sufficient, since it gives
	\begin{equation}\label{eq:vEdot}
		\begin{aligned}
			v^*E_{\cdot 0}&=-cv^*_0+v^*_1+\frac12v^*_2+\cdots+\frac1{k-1}v^*_{k-1}\\
			&< - cv^*_0+v^*_1+v^*_2+\cdots+v^*_{k-1} \\
			&<-cv^*_0+k-1\\
			&< -k+k-1=-1.
		\end{aligned}
	\end{equation}
	There is some freedom in how we choose this column, but this particular choice is sufficient. 
	
	We construct the rest of $E$ so that it is a generator matrix---with strictly negative diagonal and nonnegative off the diagonal. Like the first column of $E$, the off-diagonal components of the last column are to be strictly positive. Other than the first and last columns and the diagonal, all other components of $E$ are zeros. The first column is already specified, and we choose the 
	remaining components so that 
	\begin{equation}\label{eq:Efull}
		E=\begin{pmatrix*}[c]
			-c & 0 & 0 & ~~~\cdots & ~~0 &~~c\\[3px]
			1 &~ -1-2c & 0 & ~~~\cdots & ~~0 & ~~2c\\[3px]
			\frac12 & 0 ~&-\frac12-3c & ~~~\cdots & ~~0 &~~3c\\[3px]		
			\vdots & \vdots & \vdots & ~~~\hspace*{-1px}\raisebox{-2px}{\rotatebox{12}{$\ddots$}} & ~~\vdots & ~~\vdots\\[3px]
			\frac1{k-2} & 0 & 0 & ~~~\cdots & ~~~~-\frac1{k-2}-(k-1)c &~~~(k-1)c\\[3px]
			\frac1{k-1} & 0 & 0 & ~~~\cdots & ~~0 & ~~-\frac1{k-1}\\[3px]
		\end{pmatrix*}.
	\end{equation}
	
	The first column of $E$ is strictly decreasing (ignoring the first component). The last column of $E$ is strictly increasing (ignoring the last component). These facts imply that $E$ is a strictly monotone generator matrix and that $T^\delta=T+\delta E$ is strictly monotone whenever $T$ is monotone. To see this explicitly, apply the first equation of the definition of monotonicity \eqref{eq:Tmon} to $T^\delta$ to get, for $\ell<i<j$, 
	\begin{equation}\label{eq:Td-to-T-sum}
		\sum_{s\leq\ell}(T^\delta_{is}-T^\delta_{js})=\sum_{s\leq\ell}(T_{is}-T_{js})+\delta\left(\frac1i-\frac1j\right)>0.
	\end{equation}
	Similar reasoning applies for the second equation in \eqref{eq:Tmon}. 
	
	That $(B^\delta,X^\delta)$ weakly converges to $(B,X)$ as $\delta\to0$ follows from standard Markov process theory (see \cite{billingsley} for  theoretical background material on weak convergence). 
	By this, we mean that $(B^{\delta_n},X^{\delta_n})$ weakly converges to $(B,X)$ as $n\to\infty$ for any sequence $\delta_n\to0$.
	
	We now have our perturbed model $(B^\delta,X^\delta)$. If we can establish that
	\begin{equation}\label{eq:v<xt-delta}
		v^* \preceq \frac{xe^{t(T^\delta-D_{\alpha})}}{xe^{t(T^\delta-D_{\alpha})}\bo}
	\end{equation}
	for all $t>0$, then, letting $t\to\infty$ leads to $v^*\prec v^\delta$ (by a similar convergence argument as done in the proof of Lemma \ref{lem:lamstar-v-pos}). We will first show that \eqref{eq:v<xt-delta} holds for small $t>0$, and then extend to all $t>0$, thus establishing $v^*\prec v^\delta$. Then, that the perturbed model satisfies \eqref{eq:strict-dcm-delta} will be shown to follow in a straightforward way. 
	
	For an arbitrary $x$ satisfying $v^*\prec x$, we wish to show that \eqref{eq:v<xt-delta} holds for $t$ sufficiently small. As before, we will establish it for all $x$ stochastically larger than $v^*$ by a single upward mass shift. Then, an argument largely identical to that in the proof of Lemma \ref{lem:one-mass-shift-nec-suff} shows that \eqref{eq:v<xt-delta} holds for any $x$ stochastically larger than $v^*$ (we do not reproduce the details here). That \eqref{eq:v<xt-delta} holds for small $t$ will be seen to follow from essentially the same reasoning used to prove Lemma \ref{lem:x=v+eta(ej-ei)}, and extending to all $t>0$ follows by the argument proving Lemma \ref{lem:x=v+SUMeta(ej-ei)}. 
	
	To understand how we use the arguments in the proof of Lemma \ref{lem:x=v+eta(ej-ei)}, note that \eqref{eq:v<xt-delta} is analogous to \eqref{eq:v<xt-S4first}. Applying \eqref{eq:x<y-def} to \eqref{eq:v<xt-delta} with slight rearranging gives
	\begin{equation}\label{eq:v<xtTd}
		xe^{t(T^\delta-D_{\alpha})}\bo\sum_{s\leq\ell} v^*_s
		\geq\sum_{s\leq\ell}\left(xe^{t(T^\delta-D_{\alpha})}\right)_s,
	\end{equation}
	which is analogous to \eqref{eq:DCM-w-et(T-Da)}. 
	We will expand the matrix exponential in \eqref{eq:v<xtTd}, and, as in the proof of Lemma \ref{lem:x=v+eta(ej-ei)}, consider the three cases involving $\ell,i,j$. It is not hard to see that the analogous Case I, with $\ell<i<j$, requires the most scrutiny here. Case III is similarly satisfied since $\alpha$ is increasing and $T^\delta$ is strictly monotone. Just as before, Case II is never a problem for $t$ small enough.
	
	Let $x=v^*+\eta(e_j-e_i)$ for some fixed pair $i<j$ with $\eta\in[0,v^*_i]$ (to ensure that $x$ is nonnegative). We expand the matrix exponential $e^{t(T^\delta-D_{\alpha})}$ in two different ways for use in \eqref{eq:v<xtTd}. One expansion is used when multiplying $v^*$, and the other is used when multiplying $\eta(e_j-e_i)$. The computations we are about to show are straightforward but somewhat tedious.
	
	We multiply the eigenvector $v^*$ by 
	\begin{equation*}
		e^{t(T^\delta-D_\alpha)}=e^{t(T-D_\alpha+\delta E)}=e^{t(T-D_\alpha)}+t\delta E+O(t^2),
	\end{equation*}
	which gives, after noting that $E\bo=\mathbf0$ since it is a generator matrix, 
	\begin{subequations}
		\begin{align}
			v^*e^{t(T^\delta-D_{\alpha})}&=v^*e^{-t\alpha^*}+t\delta v^*E+ O(t^2),\label{eq:v-part-xa}\\
			v^*e^{t(T^\delta-D_{\alpha})}\bo&=e^{-t\alpha^*}+ O(t^2).\label{eq:v-part-xb}
		\end{align}
	\end{subequations}
	We multiply $\eta(e_j-e_i)$ by
	\begin{equation*}
		e^{t(T^\delta-D_\alpha)}=e^{t(T-D_\alpha+\delta E)}=I+t(T-D_\alpha)+t\delta E+O(t^2),
	\end{equation*}
	which, along with the facts that $(e_j-e_i)I\bo=0$ and $E\bo=T\bo=\mathbf 0$, gives
	\begin{subequations}
		\begin{align}
			&\begin{aligned}
				\eta(e_j-e_i)e^{t(T^\delta-D_{\alpha})}&=
				\eta(e_j-e_i)(I+t(T-D_\alpha)+t\delta E)
				+ \eta O(t^2),
			\end{aligned}\label{eq:eta-part-xa}\\
			&\hspace*{-4px}\eta(e_j-e_i)e^{t(T^\delta-D_{\alpha})}\bo=-t\eta(\alpha_j-\alpha_i)+ \eta O(t^2).\label{eq:eta-part-xb}
		\end{align}
	\end{subequations}
	We apply \eqref{eq:v-part-xa}--\eqref{eq:eta-part-xb} to \eqref{eq:v<xtTd} and follow through the three cases for $\ell,i,j$ as done in the proof of Lemma \ref{lem:x=v+eta(ej-ei)}. The detailed computations are not reproduced here for all three cases, but, as mentioned above, Case I needs the most careful consideration. 
	
	To check Case I, we want to show that \eqref{eq:v<xtTd} holds for all $\ell<i$ (with our fixed $i<j$) and with $x=v^*+\eta(e_j-e_i)$. Substitute $x=v^*+\eta(e_j-e_i)$ into \eqref{eq:v<xtTd}, and distribute $e^{t(T^\delta-D_{\alpha})}$, to get 
	\begin{equation}\label{eq:v<xtTd-expand}
		\begin{aligned}
			v^*e^{t(T^\delta-D_{\alpha})}\bo\sum_{s\leq\ell} v^*_s&+\eta(e_j-e_i)e^{t(T^\delta-D_{\alpha})}\bo \sum_{s\leq\ell} v^*_s\\
			&\geq\sum_{s\leq\ell}\left(v^*e^{t(T^\delta-D_{\alpha})}\right)_s+\sum_{s\leq\ell}\left(\eta(e_j-e_i)e^{t(T^\delta-D_{\alpha})}\right)_s.
		\end{aligned}
	\end{equation}
	Now, we apply the four formulas \eqref{eq:v-part-xa}--\eqref{eq:eta-part-xb} to \eqref{eq:v<xtTd-expand}, giving
	\begin{equation}\label{eq:v<xtTd-expand-subst}
		\begin{aligned}
			&\big( e^{-t\alpha^*}+ O(t^2) \big) \sum_{s\leq\ell} v^*_s+ \big( -t\eta(\alpha_j-\alpha_i)+ \eta O(t^2) \big) \sum_{s\leq\ell} v^*_s\\
			&\hspace*{1.1250in}\geq\sum_{s\leq\ell}\left(v^*e^{-t\alpha^*}+t\delta v^*E+ O(t^2)\right)_s\\
			&\hspace*{1.675in}+\sum_{s\leq\ell}\left(\eta(e_j-e_i)\big(I+t(T-D_\alpha)+t\delta E\big)+\eta O(t^2)\right)_s.
		\end{aligned}
	\end{equation}
	Then, we distribute summations in \eqref{eq:v<xtTd-expand-subst} to arrive at 
	\begin{equation}\label{eq:v<xtTd-expand-subst-distr}
		\begin{aligned}
			&e^{-t\alpha^*}\sum_{s\leq\ell} v^*_s+ O(t^2)  -t\eta(\alpha_j-\alpha_i)\sum_{s\leq\ell} v^*_s+ \eta O(t^2) \\
			&\hspace*{1in}\geq e^{-t\alpha^*}\sum_{s\leq\ell}v^*_s+t\delta \sum_{s\leq\ell}(v^*E)_s+ O(t^2)\\
			&\hspace*{1.45in}+\eta\sum_{s\leq\ell}\big((e_j-e_i)I\big)_s
			+t\eta\sum_{s\leq\ell}\big((e_j-e_i)(T-D_{\alpha})\big)_s\\
			&\hspace*{1.45in}+t\delta \eta\sum_{s\leq\ell}\big((e_j-e_i)E\big)_s+ \eta O(t^2).
		\end{aligned}
	\end{equation}
	
	Note that, in the third line of \eqref{eq:v<xtTd-expand-subst-distr}, $\sum_{s\leq\ell}\left((e_j-e_i)I\right)_s=\sum_{s\leq\ell}\left((e_j-e_i)D_{\alpha}\right)_s=0$ since $I,D_\alpha$ are diagonal matrices and $\ell<i<j$. 
	It is also helpful to realize that all $O(t^2)$ terms in \eqref{eq:v<xtTd-expand-subst-distr} depend only on $t$, $\alpha$, and $T$. 
	Subtracting from both sides of \eqref{eq:v<xtTd-expand-subst-distr} the terms $e^{-t\alpha^*}\sum_{s\leq\ell} v^*_s$, and dividing by $t$, results in \eqref{eq:v<xtTd-expand-subst-distr} (and hence \eqref{eq:v<xtTd}) being equivalent to 	
	\begin{equation} \label{eq:final-vd}
		\begin{aligned}
			&- \eta(\alpha_j-\alpha_i)\sum_{s\leq\ell} v^*_s
			\geq \eta\sum_{s\leq\ell}(T_{js}-T_{is}) \\
			&\hspace*{1.5in}+ \delta \sum_{s\leq\ell}\big(v^*E\big)_s +\delta \eta \sum_{s\leq\ell}(E_{js}-E_{is}) +  O(t).
		\end{aligned}
	\end{equation}
	Note that all $O(t)$ and $\eta O(t)$ terms have been combined, but this is seen to not be a problem shortly as we allow any $\eta\in[0,v^*_i]$ (recall that $\eta\leq1$ is always guaranteed). 
	
	It is assumed in the lemma statement that 
	\begin{equation*}
		-(\alpha_j-\alpha_i)\sum_{s\leq\ell} v^*_s\geq \sum_{s\leq\ell}(T_{js}-T_{is}),
	\end{equation*} 
	but note the reversal of $i$ and $j$ on the right so that this is our DCM requirement \eqref{eq:dcm} but multiplied across by $-1$ to reverse the inequality.
	In order to establish \eqref{eq:final-vd} for small $t$, it is therefore sufficient to show that 
	\begin{equation}\label{eq:final-delta-ineq}
		0>\delta \sum_{s\leq\ell}(v^*E)_s +\delta \eta \sum_{s\leq\ell}(E_{js}-E_{is}) 
	\end{equation}
	for any $\delta>0$ and any $\eta\geq0$ (thus combining the $O(t)$ and $\eta O(t)$ terms is not problematic). 
	
	The specific construction of $E$ provides that all components of $v^*E$ are negative except the last. To see this, note that $(v^*E)_0<-1$ as shown in \eqref{eq:vEdot}. Using the definition of $E$ given in \eqref{eq:Efull}, that $c>0$, and that $v^*_j>0$ for all $j$, shows that, for $j=1,\ldots,k-2$, 
	\begin{equation*}
		\big(v^*E\big)_j=v^*_jE_{jj}=-v^*_j\left(\frac1j+(j+1)c\right)<0.
	\end{equation*}
	Of course, $(v^*E)_{k-1}>0$ since $E$ is a generator with $E\bo=\mathbf 0$, and, thus, $v^*E\bo=0$. 
	
	Hence, our assumption that $c>k/v^*_0$ along with \eqref{eq:vEdot} gives (for $\ell\neq k-1$)  
	\begin{equation}\label{eq:sumvE}
		\sum_{s\leq\ell}\big(v^*E\big)_s= \big(v^*E\big)_0+\sum_{j=1}^\ell\big(v^*E\big)_j\leq \big(v^*E\big)_0<-1.
	\end{equation}
	Also, since $E$ is strictly monotone (and $\ell<i<j$), it follows that $\sum_{s\leq\ell}(E_{js}-E_{is})<0$. Hence, the right side of \eqref{eq:final-delta-ineq} is negative for any fixed $\delta>0$ and any $\eta\geq0$. The analogous Case III computation uses \eqref{eq:sumvE} as well, but we do not explicitly show it here. We conclude that there exists $\epsilon(i,j,\delta)>0$ (satisfactory for all three $\ell,i,j$ cases) such that $0<t<\epsilon(i,j,\delta)$ implies \eqref{eq:final-vd} (note that $\epsilon(i,j,\delta)$ is independent of $\eta$). In turn, this implies \eqref{eq:v<xt-delta} holds when $0<t<\epsilon(i,j,\delta)$ with $x=v^*+\eta(e_j-e_i)$ for our fixed pair $i<j$ (and any $\eta\in[0,v^*_i]$). 
	
	Now, just as in the proof of Lemma \ref{lem:x=v+SUMeta(ej-ei)}, we let $\epsilon(\delta)=\min_{i<j}\epsilon(i,j,\delta)$. This implies that \eqref{eq:v<xt-delta} holds for any $t\in(0,\epsilon(\delta))$ and $x=v^*+\eta(e_j-e_i)$ for any pair $i<j$ and any $\eta$. For more general $x=v^*+\sum_{i<j}\eta_{ij}(e_j-e_i)$, we employ an argument similar to that in the proof of Lemma \ref{lem:one-mass-shift-nec-suff}. An expression similar to \eqref{eq:final-vd} results but involving summation over all $i<j$ with $\eta_{ij}$ in place of $\eta$. This shows that \eqref{eq:v<xt-delta} holds for any $t\in(0,\epsilon(\delta))$ and any $v^*\prec x$. 
	
	Again, mimicking the proof of Lemma \ref{lem:x=v+SUMeta(ej-ei)}, for $t>0$, let $t=t_1+\cdots+t_n$ such that each $t_j\in(0,\epsilon(\delta))$. It follows that \eqref{eq:v<xt-delta} holds for each $t_j$, and, therefore, it holds for any $t>0$. 
	
	In summary, we have proved that, given $v^*\preceq x$ and any fixed $\delta>0$, 
	\begin{equation*}
		v^* \preceq \frac{xe^{t(T^\delta-D_{\alpha})}}{xe^{t(T^\delta-D_{\alpha})}\bo}
	\end{equation*}
	for all $t>0$. Taking the limit as $t\to\infty$ proves that $v^*\preceq v^\delta$. The fact that $\alpha$ is increasing was only used implicitly here, for dealing with the analogous Case III for $\ell,i,j$ (see Case III in the proof of Lemma \ref{lem:x=v+eta(ej-ei)}). The next step explicitly uses the fact that $\alpha$ is increasing. 
	
	When $\ell<i<j$, it is not hard to see from the definition of $E$ \eqref{eq:Efull} and also \eqref{eq:Td-to-T-sum} that $\sum_{s\leq\ell}(T^\delta_{is}-T^\delta_{js})>\sum_{s\leq\ell}(T_{is}-T_{js})$.
	This, along with the facts that $v^*\preceq v^\delta$, that all models use the same increasing $\alpha$ (with $\alpha_i\leq\alpha_j$), and that \eqref{eq:DCM-lem45} holds, then shows that 
	\begin{equation*}\label{eq:Td-str<ell}
		(\alpha_j-\alpha_i)\sum_{s\leq\ell} v^\delta_s\leq(\alpha_j-\alpha_i)\sum_{s\leq\ell} v^*_s\leq \sum_{s\leq\ell}(T_{is}-T_{js}) < \sum_{s\leq\ell}(T^\delta_{is}-T^\delta_{js}).
	\end{equation*}
	The perturbed model thus satisfies \eqref{eq:strict-dcm-delta} for any $\delta>0$. 
\end{proof}


We are now ready to prove Theorem \ref{thm:main_mmpp_cm}. We must show that the assumptions of strict monotonicity (strict inequalities in \eqref{eq:Tmon}) and strict inequalities in \eqref{eq:dcm} can be relaxed. Then, we must prove that \eqref{eq:dcm} for all triples $\ell<i<j$ is necessary for DCM (under our assumptions of $T$ monotone, $\alpha$ increasing).


\enlargethispage{0.9\baselineskip}

\subsection{Proof of Theorem \ref{thm:main_mmpp_cm}}
\label{sec:proof-of_main_dcm-thm}

Assume the conditions of Theorem \ref{thm:main_mmpp_cm} hold: that $T$ is monotone, $\alpha$ is increasing, and that for all triples $\ell<i<j$, we have
\begin{equation*}
	(\alpha_j-\alpha_i)\sum_{\ell\leq s} v^*_s\leq \sum_{\ell\leq s} (T_{is}-T_{js}).
\end{equation*}
We have already established that $T$ being strictly monotone and strict inequalities in the above equation imply downward conditional monotonicity as defined in Definition \ref{def:DCM} (see Lemmas \ref{lem:one-mass-shift-nec-suff} through \ref{lem:dcm-v-x-y-all-t}). Now, we need to show that $T$ can simply be monotone and that equality is allowed in the above equation.

Consider the family of matrices $T^\delta=T+\delta E$ with $E$ the strictly monotone generator matrix constructed in the proof of Lemma \ref{lem:Td-ad-model-exists} (recall that $T^\delta$ is strictly monotone and irreducible). Denote by $v^\delta,\alpha^*_\delta$ the eigenpair for the \MMPPt{$\alpha,T^\delta$} process: for each $\delta>0$, $\alpha_\delta^*$ is the minimal-modulus eigenvalue of $D_\alpha-T^\delta$ and $v^\delta$ is the (strictly positive) eigenvector (via Lemma \ref{lem:lamstar-v-pos}), with $v^\delta\bo=1$. 

Here, it is important that the relevant eigenspaces under consideration are one-dimensional. By Lemma \ref{lem:lamstar-v-pos}, the eigenspaces associated with $\alpha^*$ and $\alpha_\delta^*$ are always one-dimensional (under the assumption that $T$ is irreducible). 

It is a well-known fact that eigenvalues are continuous functions of the matrix components. Since $T^\delta\to T$ component-wise as $\delta\to0$ and $\alpha^*,\alpha^*_\delta$ are all simple eigenvalues (and always minimal-modulus for their respective matrices), it follows that $\alpha^{*}_\delta\to\alpha^*$  (see Theorem 6.3.12 in \cite{matan_horn}). 

We aim to show that $v^\delta\to v^*$ as $\delta\to0$ (our normalizing constraint $v^\delta\bo=v^*\bo=1$ is important), but convergence for eigenvectors is generally not guaranteed. However, the relevant eigenspaces all being one-dimensional is sufficient to obtain the desired eigenvector convergence (with our assumed normalization). We prove this next. 

Since $v^\delta$ is in the nullspace of $\alpha_\delta^*I-D_\alpha+T^\delta$ for all $\delta>0$, we have
\begin{equation*}
\begin{aligned}
	v^\delta (\alpha^*I-D_\alpha+T) &=v^\delta (\alpha^*I-\alpha_\delta^*I+T-T^\delta) + v^\delta (\alpha_\delta^*I-D_\alpha+T^\delta)\\
	&=v^\delta (\alpha^*I-\alpha_\delta^*I+T-T^\delta).
\end{aligned}
\end{equation*}
That the eigenvalues converge $\alpha^*_\delta\to\alpha^*$ and the matrices converge $T^\delta\to T$ then implies convergence $v^\delta (\alpha^*I-D_\alpha+T)\to0$, the zero row vector.

Now, we use the real Jordan canonical form \eqref{eq:J-norm-form} from the proof of Lemma \ref{lem:lamstar-v-pos}. Recall our convention to denote column vectors in bold font. Let 
\begin{equation*}
	V^{-1}=\begin{bmatrix}
	\mathbf y & Z
	\end{bmatrix}, \quad 
	V=\begin{bmatrix}
	v^*\\
	U
	\end{bmatrix}, 
\end{equation*}
where $\mathbf y$ is the right eigenvector of $T-D_\alpha$ corresponding to $-\alpha^*$ (i.e., $(D_\alpha-T)\mathbf y=\alpha^*\mathbf y$). 

\pagebreak

\noindent The columns of $Z$ and rows of $U$ capture all other (generalized) eigenvectors and vectors spanning any subspaces corresponding to complex eigenvalues. This gives 
\begin{equation*}
	\alpha^*I-D_\alpha+T =
	\begin{bmatrix}
		\mathbf y & Z
	\end{bmatrix} 
	\left(\begin{matrix}
		0 & \quad 0\\
		\mathbf 0 & \quad \alpha^* I+J
	\end{matrix}\right)
	\begin{bmatrix}
		v^*\\
		U
	\end{bmatrix}.
\end{equation*}
Using this Jordan canonical form shows that
\begin{equation}\label{eq:z-def}
	v^\delta (\alpha^*I-D_\alpha+T)=v^\delta Z(\alpha^* I+J) U =w^\delta
\end{equation}
which defines the vector $w^\delta$. We have that $w^\delta\to0$ as $\delta\to0$, as already shown above.

Let $v^\delta$ be represented as a linear combination of $v^*$ and the rows of $U$ ($v^*$ and the rows of $U$ together form a basis for $\mathbb R^k$) by
\begin{equation}\label{eq:v-delta-form}
	v^\delta=b^\delta v^*+\sum_{j=1}^{k-1} c^\delta_j u_j,
\end{equation} 
where the $u_1,\ldots, u_{k-1}$ are the rows of $U$, and the $b^\delta,c^\delta_j$ are appropriate constants (which vary in $\delta$ of course). 
It follows that $v^\delta Z$ is the coordinate projection of $v^\delta$ onto the rows of $U$, showing that $v^\delta Z=(c^\delta_1,\ldots,c^\delta_{k-1})$. 

Note that $UZ=I$, which is easily seen by the forms of $V^{-1}$ and $V$ shown above. Multiply \eqref{eq:z-def} on the right by $Z$ to get
\begin{equation*}
	v^\delta Z (\alpha^* I+J) =w^\delta Z.
\end{equation*}
Since $-\alpha^*$ is not an eigenvalue of $J$, 
it follows that $(\alpha^* I+J)^{-1}$ exists, giving
\begin{equation*}
	v^\delta Z = w^\delta Z (\alpha^* I+J)^{-1}.
\end{equation*}
That $w^\delta\to0$ as $\delta\to0$ (shown above) implies that $v^\delta Z =(c^\delta_1,\ldots,c^\delta_{k-1})\to0$ as well. In other words, all $c^\delta_j$ coefficients in \eqref{eq:v-delta-form} decay to zero. Multiply \eqref{eq:v-delta-form} on the right by $\bo$ to see
\begin{equation*}
	1=v^\delta\bo=b^\delta v^*\bo+\sum_{j=1}^{k-1} c^\delta_j u_j\bo=b^\delta +\sum_{j=1}^{k-1} c^\delta_j u_j\bo.
\end{equation*} 
Thus, as $\delta\to0$, we have that
\begin{equation*}
	b^\delta=1-\sum_{j=1}^{k-1} c^\delta_j u_j\bo\to1.
\end{equation*}
We conclude that $v^\delta\to v^*$ as $\delta\to0$. 

Now, we have all convergences needed to proceed with proving the model $(B,X)$ is DCM. To see that the model $(B,X)$ satisfies Definition \ref{def:DCM} for DCM, consider some arbitrary $x,y$ with $v^*\preceq x\preceq y$ and construct sequences of vectors $x^\delta,y^\delta$ so that $v^\delta\preceq x^\delta\preceq y^\delta$ for all $\delta>0$ (with limits as $\delta\to0$ indicated by the notation) as follows. Given $v^*\prec x\prec y$, we can write $x=v^*+\sum_{i<j}\eta^x_{ij}(e_j-e_i)$ and $y=x+\sum_{i<j}\eta^y_{ij}(e_j-e_i)$. Now, simply let $x^\delta=v^\delta+\sum_{i<j}\eta^x_{ij}(e_j-e_i)$ and $y^\delta=x^\delta+\sum_{i<j}\eta^y_{ij}(e_j-e_i)$, which clearly gives the desired limits.

Since the model $(B^\delta,X^\delta)$ satisfies \eqref{eq:strict-dcm-delta} 
for all $\delta>0$, Lemma \ref{lem:dcm-v-x-y-all-t} then gives that 
\begin{equation*}\label{eq:v<x<y-delta}
	v^\delta\preceq \frac{x^\delta e^{t(T^\delta-D_{\alpha})}}{x^\delta e^{t(T^\delta-D_{\alpha})}\bo}\preceq \frac{y^\delta e^{t(T^\delta-D_{\alpha})}}{y^\delta e^{t(T^\delta-D_{\alpha})}\bo},
\end{equation*}
for all $\delta>0$ and any $t>0$. Taking the limit as $\delta\to0$ shows that $v^*\prec x_t\prec y_t$ (see Corollary 6.2.32 in \cite{topmatan_horn} for matrix exponential convergence). Since this is true for all $t>0$ and arbitrary $x,y$ satisfying $v^*\prec x\prec y$, it follows that $(B,X)$ is also DCM. 

The arguments up to this point have established that any MMPP with monotone $T$, increasing $\alpha$, and which satisfies \eqref{eq:dcm} for all $\ell<i<j$ is indeed DCM. We now show that these criteria are in fact necessary for DCM. If the inequality \eqref{eq:dcm} is violated for some triple $\ell<i<j$, then there exists a sufficiently small $t>0$ and $x=v^*+\eta(e_j-e_i)$ (with $\eta>0$) such that \eqref{eq:DCM-w-Ot} is violated (see Case I studied in the proof of Lemma \ref{lem:x=v+eta(ej-ei)}). It follows that \eqref{eq:DCM-w-et(T-Da)}, and hence \eqref{eq:v<xt-S4first}, is violated giving the non-DCM behavior that $v^*\prec x$ but  
\begin{equation*}
	v^*\not\preceq \frac{xe^{t(T-D_\alpha)}}{xe^{t(T-D_\alpha)}\bo}.
\end{equation*}
Hence, such an MMPP (with monotone $T$ and increasing $\alpha$) is DCM if and only if it satisfies \eqref{eq:dcm} for all triples $\ell<i<j$. Theorem \ref{thm:main_mmpp_cm} is thus proved for $k\geq3$.

The $k=2$ case is arguably covered partially in \cite{broman2007}, but this is not obvious since no concept of DCM appears therein. In particular, they require $\alpha_0\leq\alpha_1$, but we do not. 

Let $k=2$, and take arbitrary $x\prec y$ (there is no need to consider $v^*$ here, as will become clear shortly). We wish to show that $x_t\prec y_t$ for all $t>0$. Of the cases in Table \ref{tab:cases} in the proof of Lemma \ref{lem:x=v+eta(ej-ei)}, only Case II is relevant with $0=i=\ell<j=1$ (since checking $\ell=k-1=1$ is unnecessary). Follow through the computation as in the proof of Lemma \ref{lem:x=v+eta(ej-ei)} for Case II, but, similar to what was done in the proof of Lemma \ref{lem:dcm-v-x-y-all-t}, do not combine the $O(t^2)$ terms. This shows that $x_t\prec y_t$ holds when
\begin{equation}\label{eq:caseII-k=2}
	\big((\alpha_1-\alpha_0)+O(t)\big)(x_t)_0\leq \frac1t-T_{01}-T_{10}-\alpha_0+O(t).
\end{equation}
This is analogous to \eqref{eq:caseII} but with $x_t$ in place of $v^*$, $\ell=i=0,j=1$, and with the $O(t)$ terms not combined in the way discussed below \eqref{eq:DCM-w-Ot}. It is important here that the $O(t)$ terms in \eqref{eq:caseII-k=2} in no way depend on the particular $x$ and only depend on the components of $\alpha,T$ (and on $t$). Since $0\leq(x_t)_0\leq1$ (for arbitrary distribution $x$ and any $t\geq0$), it is not hard to see that we can find an $\epsilon>0$ such that \eqref{eq:caseII-k=2} holds for all $t\in(0,\epsilon)$ and any $x$. Similar to what was done in the proof of Lemma \ref{lem:dcm-v-x-y-all-t}, it is sufficient to choose $\epsilon>0$ such that, for all $t\in(0,\epsilon)$, both $O(t)$ terms in \eqref{eq:caseII-k=2} satisfy 
\begin{equation*}
	|O(t)|<\frac12\left(\frac1t-T_{01}-T_{10}-\alpha_0-|\alpha_1-\alpha_0|\right).
\end{equation*}
We emphasize that this $\epsilon$ in no way depends on our particular choice of $x$ here. That $\epsilon$ does not depend on $\eta$ follows by the same reasoning mentioned below \eqref{eq:DCM-w-Ot} and at the end of the proof of Lemma \ref{lem:x=v+eta(ej-ei)}. 

Hence, we can find an $\epsilon>0$ such that $x_t\prec y_t$ when $t<\epsilon$ for all pairs of distributions $x\prec y$. As done in the proof of Lemma \ref{lem:dcm-v-x-y-all-t}, we similarly establish that $x_t\prec y_t$ for all $t>0$ by writing $t=t_1+\cdots+t_n$ with each $t_j<\epsilon$. Of course, $(v^*)_t=v^*$ always. Thus $v^*\prec x\prec y$ implies $v^*\prec x_t\prec y_t$ for all $t>0$. By Lemma \ref{lem:dcm-matrix-form}, this establishes DCM when $k=2$. 
\hfill $\square$

\begin{remark}\label{rmk:FCM}
	The $k=2$ case in the proof of Theorem \ref{thm:main_mmpp_cm} actually establishes something stronger than downward conditional monotonicity. DCM only requires conditional monotonicity in relation to $v^*$, that $v^*\prec x\prec y$ implies $v^*\prec x_t\prec y_t$ for all $t>0$. The $k=2$ case shows that $x\prec y$ implies $x_t\prec y_t$ for all $t>0$ without any need to consider $v^*$. We call this \textit{full conditional monotonicity} (FCM). Extending the concept of FCM to $k\geq3$ flows somewhat naturally from the methods in this paper (as does extending the concept of DCM to MMPPs with nonmonotone $T$). The heavy lifting is already done here. However, for $k\geq3$, DCM does not generally imply FCM. It is also not too much additional work to extend the concepts of FCM and DCM to more general continuous-time Markov processes on finite, totally-ordered state spaces. We pursue these extensions no further here. 
\end{remark}	

\begin{remark}\label{rmk:dcm-suff-only}
	The main purpose of our development of downward conditional monotonicity was to allow derivation of the stochastic domination established in Theorem \ref{thm:main_mpp_dom}(i). It is not hard to see that DCM is merely sufficient but not necessary for this (see Remark \ref{rmk:non-dcm}). 
\end{remark}

\subsection{Proof of Lemma \ref{lem:v-sdom-pi}}
\label{sec:v-sdom-pi}

For this proof, it is important that $T$ is monotone and $\alpha$ increasing. Our goal is to show that $v^*\precn \pi$ which is strict domination as defined below \eqref{eq:x<y-def}. 

We begin by showing that, for small enough $t$,
\begin{equation}\label{eq:pi_t<pi}
	\frac{\pi e^{t(T-D_\alpha)}}{\pi e^{t(T-D_\alpha)}\bo} \precn \pi.
\end{equation}
For this purpose, it is convenient to use the matrix exponential representation 
\begin{equation}\label{eq:etDaT-rep-pi-sdom}
	e^{t(T-D_\alpha)}=e^{tT}(I-tD_\alpha)+O(t^2).
\end{equation}
Note that $e^{tT}$ is a monotone transition matrix since $T$ is a monotone generator. 
Recall that $O(t^2)$ stands in place for a scalar, vector, or matrix (with each component satisfying the stated asymptotic behavior) depending on the context (e.g., it is a matrix in \eqref{eq:etDaT-rep-pi-sdom}).  

Applying the matrix exponential representation \eqref{eq:etDaT-rep-pi-sdom} to the left side of \eqref{eq:pi_t<pi} and using the fact that $\pi$ is the stationary distribution (with $\pi\bo=1$) for both the generator $T$ ($\pi T=0$, the zero row vector) and the transition matrix $e^{tT}$ ($\pi e^{tT}=\pi$) gives
\begin{equation*}
	\frac{\pi e^{t(T-D_\alpha)}}{\pi e^{t(T-D_\alpha)}\bo}=\frac{\pi (e^{tT}(I-tD_\alpha)+O(t^2))}{\pi (e^{tT}(I-tD_\alpha)+O(t^2)) \bo}=\frac{\pi -t\pi D_\alpha+O(t^2)}{1-t\pi D_\alpha\bo+O(t^2)}.
\end{equation*}
Hence, the strict stochastic domination \eqref{eq:pi_t<pi} can be written as
\begin{equation*}
	\frac{\pi -t\pi D_\alpha+O(t^2)}{1-t\pi D_\alpha\bo+O(t^2)}\precn \pi.
\end{equation*}
Translating this strict domination into the summation comparison \eqref{eq:x<y-def} with strict inequality (for $\ell\neq k-1$) gives
\begin{equation}\label{eq:pi_t<pi-simpl-distr}
	\frac{\sum_{s\leq\ell}\pi_s -t\sum_{s\leq\ell}(\pi D_\alpha)_s+O(t^2)}{1-t\pi D_\alpha\bo+O(t^2)}> \sum_{s\leq\ell}\pi_s.
\end{equation}
Finally, multiply across by the denominator on the left of \eqref{eq:pi_t<pi-simpl-distr}, subtract the term $\sum_{s\leq\ell}\pi_s$ from both sides, divide both sides by $-t$, and combine the remaining $O(t)$ terms to arrive at
\begin{equation}\label{eq:piDa<}
	\sum_{s\leq\ell}(\pi D_\alpha)_s< 
	\pi D_\alpha\bo\sum_{s\leq\ell} \pi_s+O(t)
\end{equation}
for all $\ell\neq k-1$. We now pursue establishing \eqref{eq:piDa<} for $t$ small enough as this will show that \eqref{eq:pi_t<pi} holds for small $t$. 

Let $B_0\sim\pi$. Recall that $\alpha$ is increasing and not a constant vector and that $\pi$ is strictly positive by irreducibility of $T$. We aim to prove the intuitive fact that, when $\ell\neq k-1$,  
\begin{equation}\label{eq:Epiell<Epi}
	\mathbb E[\alpha_{B_0}\mid B_0\leq\ell] < \mathbb E[\alpha_{B_0}].
\end{equation}
To see why \eqref{eq:Epiell<Epi} holds, we use a simple fact about the \textit{mediant} of fractions $A/B$ and $C/D$ with $B,D>0$: $A/B<(A+C)/(B+D)<C/D$ if and only if $A/B<C/D$. Let  
\begin{equation*}
	A=\sum_{j=0}^\ell\alpha_j\pi_j, \qquad
	B=\sum_{j=0}^\ell\pi_j, \qquad
	C=\!\!\!\sum_{j=\ell+1}^{k-1}\alpha_j\pi_j, \qquad
	D=\!\!\!\sum_{j=\ell+1}^{k-1}\pi_j.
\end{equation*}
Note that $B,C,D>0$ since $\pi>0,\alpha_{k-1}>0$ but that $A=0$ is possible.


We study $CB-DA$ and arrive at
\begin{equation}\label{eq:CB-DA}
\begin{aligned}
	CB-DA&=\sum_{j=\ell+1}^{k-1}\alpha_j\pi_j \sum_{i=0}^\ell\pi_i - \sum_{j=\ell+1}^{k-1}\pi_j \sum_{i=0}^\ell\alpha_i\pi_i \\
	&=(\alpha_{\ell+1}\pi_{\ell+1}+\cdots+\alpha_{k-1}\pi_{k-1})(\pi_0+\cdots+\pi_\ell)\\
	&\hspace*{1in}-(\pi_{\ell+1}+\cdots+\pi_{k-1})(\alpha_0\pi_0+\cdots+\alpha_\ell\pi_\ell)\\
	&=\sum_{j=\ell+1}^{k-1}\sum_{i=0}^\ell(\alpha_j-\alpha_i)\pi_j\pi_i,
\end{aligned}
\end{equation}
which can be proven via induction on $k$ and $\ell$ or other elementary methods, so we leave this as an exercise for the reader. 
That $CB-DA>0$ follows from the fact that $\alpha_j-\alpha_i\geq0$ for all $j\in\{\ell+1,\ldots,k-1\}$ and $i\in\{0,\ldots,\ell\}$, that $\alpha_{k-1}-\alpha_0>0$, and that $\pi_j\pi_i>0$ for all $i,j$. This proves that $A/B<C/D$. Thus, by the above-stated fact about mediants, we have that $A/B<(A+C)/(B+D)$, 
which is simply \eqref{eq:Epiell<Epi} written in terms of $A,B,C,D$: 
\begin{equation}\label{eq:E-alpha-ell-expand}
\begin{aligned}
	\mathbb E[\alpha_{B_0}\mid B_0\leq\ell]&=\frac{\sum_{j=0}^\ell\alpha_j\pi_j}{\sum_{j=0}^\ell\pi_j} \\
	&	=\frac AB < \frac{A+C}{B+D}=\frac{\sum_{j=0}^\ell\alpha_j\pi_j+\sum_{j=\ell+1}^{k-1}\alpha_j\pi_j}{\sum_{j=0}^\ell\pi_j+\sum_{j=\ell+1}^{k-1}\pi_j}=\mathbb E[\alpha_{B_0}].
\end{aligned}
\end{equation}

Recalling that $B+D=\pi\bo=1$, rearranging the inequality in \eqref{eq:E-alpha-ell-expand} gives $A<(A+C)B$. Here, it is important that $(A+C)B>0$. It follows that $A<(A+C)B+O(t)$ for $t$ small enough, and this is exactly
\eqref{eq:piDa<} translated into our notation with $A,B,C$ as defined. This establishes \eqref{eq:pi_t<pi} for $t$ small enough, since it is equivalent to \eqref{eq:piDa<}. Of course, $\pi_t\precn\pi$ implies $\pi_t\prec\pi$. The first step is now complete. 

Now, we proceed to show that $\pi_t\prec\pi$ is in fact true for all $t>0$. To achieve this, we essentially use a discretization of our model without explicitly defining it. 
Since $T$ is a monotone generator, then $e^{T/m}=I+T/m+O(m^{-2})$ is a monotone transition matrix for all $m>0$ (see the paragraph containing \eqref{eq:Tmon}). For transition matrices, each row is a probability distribution, and monotonicity of a transition matrix means that row $i$ is dominated by row $j$ for any pair $i<j$ (see \cite{keilson1977monMC} for more details). 
In fact, we can pick $m$ sufficiently large that $I+T/m$ is a monotone transition matrix. To see this, let $M=I+T/m$, and let $\ell\in \{0,\ldots,k-1\}$. We need to show that, for any $i<j$, $\sum_{s\leq\ell}M_{is}\geq\sum_{s\leq\ell}M_{js}$. This is precisely \eqref{eq:x<y-def} applied to rows $i$ and $j$ of $M$: letting $M_{i\cdot}$ denote the $i$th row of $M$, we need to show that $M_{i\cdot}\prec M_{j\cdot}$ for $i<j$. Let $\gamma_i=\sum_{s\neq i} T_{is}$ be the total exit rate from state $i$. Note that, off the diagonal, $M_{ij}=\frac1mT_{ij}\geq0$ for $i\neq j$, and, on the diagonal, $M_{ii}=1-\frac1m \gamma_i$ (which is clearly nonnegative for $m$ large enough). When $\ell<i<j$, monotonicity of the generator $T$ \eqref{eq:Tmon} gives
\begin{equation*}
	\sum_{s\leq\ell}M_{is}=\frac1m\sum_{s\leq\ell}T_{is}\geq\frac1m\sum_{s\leq\ell}T_{js}=\sum_{s\leq\ell}M_{js}.
\end{equation*} 
For $i<j\leq\ell$, the computation is similar, and $\sum_{s\leq\ell}M_{is}\geq\sum_{s\leq\ell}M_{js}$ follows due to the sum over $s>\ell$ in \eqref{eq:Tmon}. With $i\leq\ell<j$, we have $\sum_{s\leq\ell} M_{is}\geq1-\frac1m\gamma_i$ since it contains the diagonal, and $\sum_{s\leq\ell}M_{js}\leq\frac1m\gamma_j$, which does not capture the diagonal. Hence, we have 
\begin{equation*}
	\sum_{s\leq\ell}M_{is}-\sum_{s\leq\ell}M_{js}
	\geq1-\frac1m(\gamma_i+\gamma_j),
\end{equation*}
which can clearly be made nonnegative for large enough $m$. It is apparent that $m\geq2\max_i\gamma_i$ is sufficient. Also, we require $m$ to be large enough so that $1-\alpha/m$ is nonnegative, for reasons which will become clear shortly.

A monotone transition matrix $M$ has the following property: for any distributions $x\prec y$, it follows that $xM\prec yM$ (see, e.g., \cite{keilson1977monMC}), and this is a property of $I+T/m$ that we use below. Note that, since $\pi T=0$, $\pi$ is also the stationary distribution of transition matrix $I+T/m$.

Let $B_0\sim x$ with strictly positive $x$. For any $\ell\in S$, since $\alpha$ is increasing, it follows that
\begin{equation}\label{eq:Ea<ell}
	\frac{\sum_{s\leq\ell}x_s\alpha_s}{\sum_{s\leq\ell}x_s}=\mathbb E[\alpha_{B_0}\mid B_0\leq\ell] \leq \mathbb E[\alpha_{B_0}]=\sum_{s}x_s\alpha_s
\end{equation} 
%
%
%
by similar reasoning establishing \eqref{eq:Epiell<Epi}. To \eqref{eq:Ea<ell}, multiply the
denominator on the left side 

\pagebreak

\addtolength{\abovedisplayskip}{-3px}
\addtolength{\belowdisplayskip}{-1px}

\noindent across to the right side. Next, to both sides of the result, multiply by $-\frac1m$, and then add $\sum_{s\leq\ell}x_s$ to both. This gives
\vspace*{-3px} 
\begin{equation}\label{eq:Ea<ell-exp}
	\sum_{s\leq\ell}x_s-\frac1m\sum_{s\leq\ell}x_s\alpha_s \geq \sum_{s\leq\ell}x_s-\frac1m\left(\sum_{s}x_s\alpha_s\right) \Bigg(\sum_{s\leq\ell}x_s\Bigg).
\end{equation}

\noindent Noting that $\sum_{s}x_s=x\bo=1$ and that $\sum_{s}x_s\alpha_s=xD_\alpha\bo=1$, \eqref{eq:Ea<ell-exp} can then be rewritten as 
\vspace*{4px} 
\begin{equation}\label{eq:Ea<ell-exp2}
	\sum_{s\leq\ell}\left(xI-\frac1m xD_\alpha\right)_s \geq \left(x\bo-\frac1mxD_\alpha\bo\right) \sum_{s\leq\ell}x_s.
\end{equation}
Finally, we divide \eqref{eq:Ea<ell-exp2} by $x\bo-\frac1mxD_\alpha\bo=x(I-\frac1mD_\alpha)\bo$ 
(which is positive since $x>0$ and $m$ is chosen large enough to ensure $1-\alpha/m\geq0$) 
to arrive at
\vspace*{2px} 
\begin{equation*}
	\frac{\sum_{s\leq\ell}\left(xI-\frac1m xD_\alpha\right)_s}{x(I-\frac1mD_\alpha)\bo} \geq \sum_{s\leq\ell}x_s, 
\end{equation*}
which can be seen as \eqref{eq:x<y-def} applied to 
\begin{equation}\label{eq:xI-Da<x}
	\frac{x(I-D_\alpha/m)}{x(I-D_\alpha/m)\bo} \prec x.
\end{equation}
Note that, since $I-D_\alpha/m$ is a nonnegative diagonal matrix with a decreasing diagonal, \eqref{eq:xI-Da<x} is an example of the modification to Lemma \ref{lem:rescale-Da} mentioned after its proof. Thus, \eqref{eq:xI-Da<x} is established. 

We now have the ingredients we need for the next step of the proof: that $I+T/m$ is a monotone transition matrix, that multiplication by $I-D_\alpha/m$ (with normalization) creates a stochastically smaller distribution \eqref{eq:xI-Da<x}, and that \eqref{eq:pi_t<pi} holds for small $t$. 

Let $E_m=(I+T/m)(I-D_\alpha/m)$. Now, we use $\pi$ in \eqref{eq:xI-Da<x} in place of $x$ and use the fact that $\pi$ is the stationary distribution of $I+T/m$ to get 
\vspace*{3px} 
\begin{equation*}\label{eq:Empi}
	\frac{\pi E_m}{\pi E_m\bo} = \frac{\pi (I+T/m)(I-D_\alpha/m)}{\pi (I+T/m)(I-D_\alpha/m)\bo}= \frac{\pi (I-D_\alpha/m)}{\pi (I-D_\alpha/m)\bo} \prec \pi.
\end{equation*}
\vspace*{-2px}

\noindent Multiplying both sides of ${\pi E_m} / {\pi E_m\bo} \prec \pi $ by monotone transition matrix $I+T/m$ preserves the stochastic domination. That, along with $\pi(I+T/m)=\pi$, gives 
\vspace*{2px} 
\begin{equation}\label{eq:Empi2}
	\frac{\pi E_m}{\pi E_m\bo} (I+T/m) \prec \pi(I+T/m)=\pi.
\end{equation}
\vspace*{-3px} 

\noindent Next, multiplying the left side of \eqref{eq:Empi2} by $(I-D_\alpha/m)$ and normalizing creates a stochastically smaller distribution (an application of \eqref{eq:xI-Da<x}):
\vspace*{3px} 
\begin{equation*}
	\frac{\frac{\pi E_m}{\pi E_m\bo} (I+T/m)(I-D_\alpha/m)}{\frac{\pi E_m}{\pi E_m\bo} (I+T/m)(I-D_\alpha/m)\bo} \prec \pi,
\end{equation*}
which can be seen as equivalent to $\frac{\pi E_m^2}{\pi E_m^2\bo} \prec \pi$. Induction then shows that, for any $n$ (and any sufficiently large $m$),
\vspace*{-2px} 
\begin{equation}\label{eq:piEm}
	\frac{\pi E_m^n}{\pi E_m^n\bo} \prec \pi.
\end{equation}
\vspace*{0.5px} 

Fix some arbitrary $t>0$, and let $n=\lfloor mt \rfloor$. 
As $m\to\infty$, it is not hard to understand that
\begin{equation*}
	E_m^{\lfloor mt \rfloor} = \left(I+\frac1m(T-D_\alpha)+O\left(1/m^2\right)\right)^{\lfloor mt \rfloor} \longrightarrow e^{t(T-D_\alpha)}.
\end{equation*}
Now, let $n=\lfloor mt \rfloor$ in \eqref{eq:piEm}. It follows that 
\begin{equation*}
	\frac{\pi E_m^{\lfloor mt \rfloor}}{\pi E_m^{\lfloor mt \rfloor}\bo}  \overset{m\to\infty}\longrightarrow \frac{\pi e^{t(T-D_\alpha)}}{\pi e^{t(T-D_\alpha)}\bo} \prec \pi.
\end{equation*}
Since $t>0$ is arbitrary here, it is thus established that $\pi_t\prec \pi$ for all $t>0$. Taking the limit as $t\to\infty$ leads to $v^*\prec\pi$.

That the model is DCM provides that $v^*\prec\pi$ implies $v^*\prec\pi_t$ by Lemma \ref{lem:dcm-matrix-form}. Now, pick a time $t$ small enough so that \eqref{eq:pi_t<pi} holds. It follows that 
\begin{equation*}
	v^* \prec \pi_t=\frac{\pi e^{t(T-D_\alpha)}}{\pi e^{t(T-D_\alpha)}\bo} \precn \pi.
\end{equation*}
Hence, it is established that $v^*\precn\pi$. \hfill $\square$

\vspace*{-2px}

\section*{Acknowledgements}

The author thanks the referee for detailed comments and suggestions which greatly improved the clarity and exposition of the manuscript. 

\vspace*{-2px}

\bibliographystyle{abbrv}

\begin{thebibliography}{10}
	
	\bibitem{berman1994}
	Berman, A. and Plemmons, R. (1994).
	{\em Nonnegative Matrices in the Mathematical Sciences}.
	Classics in Applied Mathematics \textbf{9}.
	SIAM, Philadelphia.
	\href{https://mathscinet.ams.org/mathscinet-getitem?mr=MR1298430}{MR1298430} 
	\href{https://doi.org/10.1137/1.9781611971262}{https://doi.org/10.1137/1.9781611971262} 
	
	
	\bibitem{BezuidenhoutGrimmett1990}
	Bezuidenhout, C. and Grimmett, G. (1990).
	The critical contact process dies out.
	{\em Ann. Probab.} 
	\textbf{18}(4):1462--1482. 
	\href{https://mathscinet.ams.org/mathscinet-getitem?mr=MR1071804}{MR1071804} 
	\href{https://doi.org/10.1214/aop/1176990627}{https://doi.org/10.1214/aop/1176990627} 
	
	
	\bibitem{billingsley}
	Billingsley, P. (1999).
	{\em Convergence of Probability Measures}, 
	2nd ed. 
	Wiley Series in Probability and Statistics: Probability and Statistics. Wiley, New York.
	\href{https://mathscinet.ams.org/mathscinet-getitem?mr=MR1700749}{MR1700749} 
	\href{https://doi.org/10.1002/9780470316962}{https://doi.org/10.1002/9780470316962} 
	
	
	\bibitem{bramson1991}
	Bramson, M., Durrett, R., and Schonmann, R.~H. (1991).
	{The contact process in a random environment}.
	{\em Ann. Probab.} 
	\textbf{19}(3):960--983.
	\href{https://mathscinet.ams.org/mathscinet-getitem?mr=MR1112403}{MR1112403} 
	\href{https://doi.org/10.1214/aop/1176990331}{https://doi.org/10.1214/aop/1176990331} 
	
	
	\bibitem{broman2007}
	Broman, E.~I. (2007). 
	{Stochastic domination for a hidden Markov chain with applications to the contact process in a randomly evolving environment}.
	{\em Ann. Probab.} 
	\textbf{35}(6):2263--2293.
	\href{https://mathscinet.ams.org/mathscinet-getitem?mr=MR2353388}{MR2353388} 
	\href{https://doi.org/10.1214/0091179606000001187}{https://doi.} 
	\href{https://doi.org/10.1214/0091179606000001187}{org/10.1214/0091179606000001187} 
	
	
	\bibitem{daley_pp}
	Daley, D.~J. and Vere-Jones, D. (2003).
	{\em An Introduction to the Theory of Point Processes. {V}ol. {I}. Elementary Theory and Methods},	2nd ed. 
	Probability and Its Applications (New York). Springer, New York. 
	\href{https://mathscinet.ams.org/mathscinet-getitem?mr=MR1950431}{MR1950431} 
	\href{https://doi.org/10.1007/b97277}{https://doi.org/10.1007/b97277} 
	
	
	\bibitem{daley_pp2}
	Daley, D.~J. and Vere-Jones, D. (2008).
	{\em An Introduction to the Theory of Point Processes. {V}ol. {II}. General Theory and Structure}, 2nd ed. 
	Probability and Its Applications (New York). Springer, New York. 
	\href{https://mathscinet.ams.org/mathscinet-getitem?mr=MR2371524}{MR2371524} 	
	\href{https://doi.org/10.1007/978-0-387-49835-5}{https://doi.org/10.1007/978-0-387-49835-5} 
	
	
	\bibitem{Harris1974}
	Harris, T.~E. (1974). 
	Contact Interactions on a Lattice. 
	{\em Ann. Probab.}, 2(6):969--988. 
	\href{https://mathscinet.ams.org/mathscinet-getitem?mr=MR0356292}{MR0356292} 
	\href{https://doi.org/10.1214/aop/1176996493}{https://doi.org/10.1214/aop/1176996493} 
	
	
	\bibitem{topmatan_horn}
	Horn, R.~A. and Johnson, C.~R. (1994). 
	{\em Topics in Matrix Analysis}. 
	Cambridge Univ. Press, Cambridge. 
	\href{https://mathscinet.ams.org/mathscinet-getitem?mr=MR1288752}{MR1288752} 	
	\href{https://doi.org/10.1017/CBO9780511840371}{https://doi.org/10.1017/CBO9780511840371} 
	
	
	\bibitem{matan_horn}
	Horn, R.~A. and Johnson, C.~R. (2012). 
	{\em Matrix Analysis}, 2nd ed. 
	Cambridge Univ. Press, Cambridge. 
	\href{https://mathscinet.ams.org/mathscinet-getitem?mr=MR2978290}{MR2978290} 
	\href{https://doi.org/10.1017/CBO9781139020411}{https://doi.org/10.1017/CBO9781139020411} 
	
	
	\bibitem{keilson1977monMC}
	Keilson, J. and Kester, A. (1977)
	Monotone matrices and monotone Markov processes.
	{\em Stochastic Process. Appl.} \textbf{5}(3):231--241.
	\href{https://mathscinet.ams.org/mathscinet-getitem?mr=MR0458596}{MR0458596} 
	\href{https://doi.org/10.1016/0304-4149(77)90033-3}{https://doi.org/10.1016/0304-4149(77)90033-3} 
	
	
	\bibitem{klenke}
	Klenke, A. (2014).
	{\em Probability Theory—a Comprehensive Course}, 2nd ed.  
	\textit{Universitext.} 
	Springer, London. 
	\href{https://mathscinet.ams.org/mathscinet-getitem?mr=MR3112259}{MR3112259} 
	\href{https://doi.org/10.1007/978-1-4471-5361-0}{https://doi.org/10.1007/978-1-4471-5361-0} 
	
	
	\bibitem{LiggettIPS}
	Liggett, T.~M. (1985). 
	{\em Interacting Particle Systems}. Grundlehren der Mathematischen Wissenschaften [Fundamental Principles of Mathematical Sciences] \textbf{276}. 
	Springer-Verlag, New York. 
	\href{https://mathscinet.ams.org/mathscinet-getitem?mr=MR0776231}{MR0776231} 
	\href{https://doi.org/10.1007/978-1-4613-8542-4}{https://doi.org/10.1007/978-1-4613-8542-4} 
	
	
	\bibitem{LiggettSIS}
	Liggett, T.~M. (2010). 
	{\em Stochastic Interacting Systems: Contact, Voter and Exclusion Processes}.
	Grundlehren der Mathematischen Wissenschaften [Fundamental Principles of Mathematical Sciences] \textbf{324}. 
	Springer-Verlag, Berlin.
	\href{https://mathscinet.ams.org/mathscinet-getitem?mr=MR1717346}{MR1717346} 
	\href{https://doi.org/10.1007/978-3-662-03990-8}{https://doi.org/10.1007/978-3-662-03990-8} 
	
	
	\bibitem{neuts1979}
	Neuts, M. (1979).
	A versatile Markovian point process. 
	{\em J. Appl. Probab.} \textbf{16}:764--779.
	\href{https://mathscinet.ams.org/mathscinet-getitem?mr=MR0549556}{MR0549556} 
	\href{https://doi.org/10.2307/3213143}{https://doi.org/10.2307/3213143} 
	
	
	\bibitem{neutsMatGeo}
	Neuts, M. (1981). 
	{\em Matrix-Geometric Solutions in Stochastic Models: an Algorithmic Approach}. 
	Johns Hopkins University Press. 
	\href{https://mathscinet.ams.org/mathscinet-getitem?mr=MR0618123}{MR0618123} 
	
	
	\bibitem{perkoDEtext}
	Perko, L. (2001). 
	{\em Differential Equations and Dynamical Systems}, 
	3rd ed. 
	Texts in Applied Mathematics \textbf{7}. 
	Springer-Verlag, New York. 
	\href{https://mathscinet.ams.org/mathscinet-getitem?mr=MR1801796}{MR1801796}  
	\href{https://doi.org/10.1007/978-1-4613-0003-8}{https://doi.org/10.1007/978-1-4613-0003-8} 
	
	
	\bibitem{Rem2008}
	Remenik, D. (2008). 
	{The contact process in a dynamic random environment}. 
	{\em Ann. Appl. Probab.} \textbf{18}(6):2392--2420. 
	\href{https://mathscinet.ams.org/mathscinet-getitem?mr=MR2474541}{MR2474541}
	
	
	\bibitem{rolski1991}
	Rolski, T. and Szekli, R. (1991). 
	{Stochastic ordering and thinning of point processes}. 
	{\em Stoch. Proc. Appl.} \textbf{37}(2):299--312. 
	\href{https://mathscinet.ams.org/mathscinet-getitem?mr=MR1102876}{MR1102876} 
	\href{https://doi.org/10.1016/0304-4149(91)90049-I}{https://doi.org/10.1016/0304-4149(91)90049-I} 
	
	
	\bibitem{steif-warf}
	Steif, J.~E. and Warfheimer, M. (2008). 
	The critical contact process in a randomly evolving environment dies out. 
	{\em ALEA Lat. Am. J. Probab. Math. Stat.} 
	\textbf{4}:337--357. 
	\href{https://mathscinet.ams.org/mathscinet-getitem?mr=MR2461788}{MR2461788} 
	
	
	\bibitem{stover2022}
	Stover, J.~P. (2022). 
	Bounds via spectral radius-preserving row sum expansions. 
	{\em Electron. J. Linear Algebra}, \textbf{38}:367--376. 
	\href{https://mathscinet.ams.org/mathscinet-getitem?mr=MR4494136}{MR4494136} 
	\href{https://doi.org/10.13001/ela.2022.6981}{https://doi.org/10.13001/ela.2022.6981}
	 	
	
\end{thebibliography}


\end{document}